\DeclareSymbolFont{AMSb}{U}{msb}{m}{n}
\documentclass[reqno,centertags,11pt,a4paper,noamsfonts]{amsart}
\pdfoutput=1
\usepackage[svgnames]{xcolor}
\usepackage{graphicx}
\usepackage[english]{babel}
\usepackage[babel=true,expansion=alltext,protrusion=alltext-nott,final]{microtype}
\usepackage[margin={3cm},marginparwidth={2.5cm}]{geometry}
\usepackage[utf8]{inputenc}
\usepackage{libertine}
\usepackage[libertine]{newtxmath}
\useosf
\usepackage[varqu,varl]{inconsolata}
\usepackage[T1]{fontenc}
\usepackage{textcomp}
\usepackage{amsmath}
\usepackage{amsthm}
\usepackage[inline]{enumitem}
\numberwithin{equation}{section}
\usepackage[normalem]{ulem}
\usepackage{tikz}
\usetikzlibrary{arrows}
\usepackage{bm}
\usepackage{soul}
\usepackage{float}
\usepackage{mathtools}
\mathtoolsset{showonlyrefs}
\usepackage{dsfont}
\usepackage{nicematrix}

\newcommand{\RR}{\mathbb{R}}
\newcommand{\RRd}{\mathbb{R}^d}

\newcommand{\NN}{\mathbb{N}}

\newcommand{\HH}{\mathcal{H}}

\newcommand{\LL}{\mathcal{L}}

\newcommand{\1}{\mathbf{1}}

\newcommand{\dd}{\mathrm{d}}

\newcommand{\calP}{{\mathcal P}}

\newcommand{\unA}{\underline{A}}
\newcommand{\unlambda}{\underline{\lambda}}

\newcommand{\eps}{\varepsilon}
\newcommand{\xdots}{x_{1},\dots,x_{N}}
\newcommand{\ydots}{y_{1},\dots,y_{N}}

\newcommand{\ytildedots}{\tilde{y}_{1},\dots,\tilde{y}_{N}}
\newcommand{\Prob}{\mathcal{P}}

\newcommand{\xp}{\overline{x}_p}
\newcommand{\xpS}{\overline{x}_{p,S^\mathsf{c}}}
\newcommand{\xvec}{\mathbf{x}}
\newcommand{\yvec}{\mathbf{y}}
\newcommand{\ytildevec}{\tilde{\mathbf{y}}}

\newcommand{\xvecS}{\xvec_{S^\mathsf{c}}}

\newcommand{\xbar}{\overline{x}}
\newcommand{\nup}{\nu_p}

\newcommand{\gammabar}{\overline{\gamma}}
\newcommand{\gammap}{\gamma_p}

\newcommand{\barp}{\mathrm{bar}_p}
\newcommand{\Wbar}{W_p\mathrm{bar}}

    \newcommand{\oG}{\overline{G}}
\newcommand{\zp}{\overline{z}_p}
    \newcommand{\bz}{\overline{z}}
\DeclareMathOperator{\weaklystar}{\rightharpoonup\kern-2.2ex ^* \, \,}

\DeclareMathOperator*{\argmin}{argmin}

\DeclareMathOperator{\spt}{spt}
\DeclareMathOperator{\diam}{diam}
\DeclareMathOperator{\id}{Id}
\DeclareMathOperator{\dist}{dist}

\def\XXint#1#2#3{{\setbox0=\hbox{$#1{#2#3}{\int}$ }
\vcenter{\hbox{$#2#3$ }}\kern-.6\wd0}}

\definecolor{darkred}{rgb}{0.4,0,0} 
\definecolor{darkgreen}{rgb}{0,.4,0}

\definecolor{darkgreen}{rgb}{0,.4,0}

\newcommand{\diag}{\text{diag}}

\newcommand{\suchthat}{\ensuremath{\ : \ }}     
\newcommand{\cJ}{\mathcal{J}}
\newcommand{\bj}{\textbf{j}}
\newcommand{\bk}{\textbf{k}}

\theoremstyle{plain}
\newtheorem{theorem}{Theorem}[section]
\newtheorem{proposition}[theorem]{Proposition}
\newtheorem{corollary}[theorem]{Corollary}
\newtheorem{lemma}[theorem]{Lemma}

\newtheorem*{theorem*}{Theorem}

\theoremstyle{definition}

\newtheorem{remark}[theorem]{Remark}
\newtheorem{example}[theorem]{Example}
\usepackage[pagebackref=false,debug=false]{hyperref}
\hypersetup{
	linktoc=all,
	colorlinks=true,
	linkcolor=Green,citecolor=Orange,urlcolor=DarkBlue,
	pdftitle={p-Wasserstein barycenters},
	pdfauthor={Camilla Brizzi, Gero Friesecke, Tobias Ried}, 
	pdfsubject={Wasserstein barycenter, optimal transport,  multi-marginal optimal transport},
	pdfkeywords={Wasserstein barycenter, optimal transport,  multi-marginal optimal transport}} 
\usepackage{graphicx} 
\begin{document}

\title{On the $q$-integrability of $p$-Wasserstein barycenters}
\author[C.~Brizzi]{Camilla Brizzi}
\address[C.~Brizzi]{Technische Universität München, Departwheneverment of Mathematics, Boltzmannstraße 3, 85748 Garching, Germany}
\email{camilla.brizzi@tum.de}

\author[L.~Portinale]{Lorenzo Portinale}
\address[L.~Portinale]{Universit\`a degli studi di Milano, Milano, Italy}
\email{lorenzo.portinale@unimi.it}
\keywords{Wasserstein barycenter, optimal transport,  multi-marginal optimal transport}
\subjclass[2020]{Primary 49Q20; Secondary 49J40, 49K21} 
\date{\today}

\begin{abstract}
We study the $L^q$-regularity of the density of barycenters of $N$ probability measures on $\RR^d$ with respect to the $p$-Wasserstein metric ($1<p<\infty$). 
According to a previous result by the first author and collaborators \cite{BFR24-p}, if one marginal is absolutely continuous, so is the $W_p$-barycenter. The next natural question is whether the $L^q$- regularity on the marginals is also preserved for any $q > 1$, as in the classical case ($p=2$) of Agueh--Carlier \cite{AC11}, or for $W_p$-geodesics ($N=2$). Here we prove that this is the case if one marginal belongs to $L^q$  and the supports of all the marginals satisfy suitable geometric assumptions. 
However, we show that, as soon as $N>2$, it is possible to find examples of $W_p$-barycenters which are \textit{not} $q$-integrable, even if one marginal is compactly supported and bounded, thus highlighting the role played by the geometry of the supports. 
Furthermore, we provide a general estimate of the $L^q$-norm, including a detailed study of the sources of singularities, and a characterization of the $W_p$-barycenters \textit{à la} Agueh--Carlier in terms of the associated Kantorovich potentials. Finally, we  explicitly compute the $W_p$-barycenters of measures obtained as push-forward of  special affine transformations. In this case, regularity holds without any additional requirement on the supports. 
\end{abstract}

\maketitle
\setcounter{tocdepth}{1}
\tableofcontents
\section{Introduction}
Wasserstein barycenters---defined first in \cite{AC11}---are an important generalization of the classical notion of barycenters of points, as they can be seen as Fréchet means over the Wasserstein space of probability measures. The concept rapidly gained popularity as a valuable tool for
meaningful geometric interpolation between probability measures and computation of representative summaries of input datasets.  
Thus, applications of Wasserstein barycenters span several domains such as data science, statistics, machine learning and image processing (see, among others, \cite{RPDB11, BLGL15, BVFRT22, PZ20}). 
This has sparked growing interest across multiple mathematical communities, making research on this topic---both with theoretical and applied perspectives---highly active. In this article, we study regularity and integrability properties of $p$-Wasserstein barycenters, which are the natural generalizations of $2$-Wasserstein barycenters introduced by Agueh and Carlier \cite{AC11} to the $p$-Wasserstein distance for $1<p<\infty$ and which have been extensively studied in \cite{BFR24-p}. We refer to the work \cite{CCE24} for the case of the so-called Wasserstein medians, corresponding to $p=1$, which, due to the lack of strict convexity presents additional difficulties. 
Further generalizations of the metric include $h$-Wasserstein distance, with $h$ being a nonnegative strictly convex function (see \cite{BFR24-h}) and  variants of OT, such as unbalanced OT (see, \cite{FMS21, BUZ25}) or entropic OT (see, for instance, \cite{CD14,C25}). Another active research direction, which goes beyond the interest of our paper, is the generalization of the  $2$-Wasserstein barycenters in more general spaces, as for instance Riemannian manifolds \cite{KP17}, Alexandrov spaces \cite{J17}, Radon spaces \cite{K18}, and abstract Wiener spaces \cite{HLZ24}.

The $p$-Wasserstein barycenters are weighted averages of probability measures in $\RR^d$ with respect to the $p$-Wasserstein metric. The $p$-Wasserstein distance, $W_p$, between two probability measures $\mu$ and $ \nu $, with finite $p$\textsuperscript{th} moments \footnote{the set of probability measures with finite $p$\textsuperscript{th} moments is $\mathcal{P}_p(\RR^d)\coloneq \left\{\mu\in\mathcal{P}(\RR^d): \int_{\RR^d} |x|^p \,\dd\mu(x) < \infty\right\}$}, is defined as 
 \begin{equation*}
     W_p(\mu,\nu) \coloneq \min_{\eta\in\Pi(\mu,\nu)}\left(\int_{\RR^{2d}}|x-y|^p \, \dd\eta(x,y)\right)^{\frac1p},
 \end{equation*}
where $\Pi(\mu,\nu)$ denotes the set of transport plans from $\mu$ to $\nu$, i.e.\
\[\Pi(\mu, \nu) \coloneq \left\{ \eta \in \Prob(\RR^{2d}): \pi^1_{\#}\eta = \mu, \pi^2_{\#}\eta = \nu\right\}.\]
Thus, given $N\ge 2$ probability measures $\mu_1, \dots, \mu_N \in \mathcal{P}_p(\RR^d)$, which we refer to as \textit{marginals}, and given weights $\lambda_1,\dots,\lambda_N>0$ such that $\sum_{i=1}^N \lambda_i = 1$, the $p$-Wasserstein barycenter is the solution \footnote{existence of a solution here can be inferred by a straightforward adaptation of \cite[Proposition 2.3]{AC11} or by using the multi-marginal formulation, see \eqref{MMpbary} below.}
\begin{align}\label{eq:pW-barycenter}\tag{$W_p$bar}
\Wbar((\mu_i,\lambda_i)_{i=1, \dots, N}) \coloneq	 \argmin_{\nu \in \mathcal{P}_p(\RR^d)} \sum_{i=1}^N \lambda_i W_p^p(\mu_i,\nu).
\end{align}
Notice that, in the case of $N=2$, the $p$-Wasserstein barycenter is nothing but the $p$-Wasserstein geodesic parametrized with a suitable rescaled time (see \eqref{eq:Wp_geodesics}).
In this special case the $p$-Wasserstein geodesics is not only absolutely continuous under the assumption of the first marginal being absolutely continuous, but it also inherits finer regularity properties, as for instance $L^q$-regularity of the density, as discussed  in \cite[Lemma 4.22]{San15}.
A natural question is therefore whether this result also holds when $N\ge 3$. When $p=2$, it has been shown in \cite{AC11} that the $L^\infty$-regularity of the density of one marginal is preserved by the Wasserstein barycenter. Even if not explicitly written there, the same proof can be easily used to show that any $L^q$-regularity, with $q\ge 1$, is preserved. The case $p\neq 2$ is much more delicate. The main difference here is that the Hessian of the $p$-cost function $|\cdot|^p$ is the identity matrix when $p=2$, while for any $p\neq 2$ it is not constant and it degenerates at $0$ for $p>2$, whereas it is singular for $1<p<2$. 
One of the key results in \cite{BFR24-p} states that, if all the measures are absolutely continuous with respect to the Lebesgue measure $\LL^d$, then $\nu_p:=\Wbar((\mu_i,\lambda_i)_{i=1, \dots, N})$ is in turn absolutely continuous (cf. \cite[Theorem 4.1]{BFR24-p}). Moreover, for $p\ge 2$, they show that it is sufficient to have only one absolutely continuous marginal, say $\mu_1$. Here, with simple considerations, we extend this result to every $1<p<\infty$  (cf. Theorem \ref{th:absolutecontinuity} in the next section). In \cite{BFR24-p}, the problem of a possibly singular Hessian is overcome by partitioning the space and developing an \textit{ad hoc} strategy to treat the singularities. 

In contrast, when dealing with  $L^q$ regularity for $q>1$, it turns out that the bare assumption $\mu_1 \in L^q(\RR^d)$ (or even $\mu_1 \in L^\infty(\RR^d)$) does not suffice to guarantee $L^q$ integrability for $\nu_p$.
In Example~\ref{ex:counterexamples}, we show that, even in simple cases, the $L^q$ regularity of the barycenter may fail if $q$ is bigger than a natural threshold which depends on $p$. In the aforementioned examples, it is evident that the lack of integrability may arise from the potential degeneracy/singularity of the Euclidean $p$-barycenter map when $p \neq 2$, due in turn to the previously discussed behavior of the Hessian of $|\cdot|^p$. 

Nonetheless, this potential source of singularity can be overcome with tailored assumptions on the supports of the reference measures. In particular, we assume compactness and some geometric properties which guarantee a positive distance between the support of the marginals $(\mu_i)_{i=1,\dots,N}$ and the one of the barycenter. Under these assumptions, we show in Theorem~\ref{thm:integrability_distant_intro} that, given $f_1,g_p\in L^1$, such that $\mu_1=f_1\LL^d$ and $\nup=g_p\LL^d$, if $f_1 \in L^q$, 
then there exists a constant $C>0$ 
, such that 
\begin{align}\label{eq:result}
    ||g_p||_{L^q}\le \frac{C}{\lambda_1^{d(1-\alpha_p)}}||f_1||_{L^q}.
\end{align}
We refer to the next section for a more detailed discussion. 
This analysis is coherent with the result by M. Goldman and L. Koch in \cite{GK25}, where the partial $C^{1,\alpha}$ regularity for the optimal map of the $p$-Wasserstein distance is proved to hold away from fixed points, i.e. at $x$ such that $T(x)=x$.

However, we provide a class of examples where the the Euclidean $p$-barycenter is uniformly bounded from below and never degenerates, and therefore the estimate \eqref{eq:result} on the $L^q$-norm of the density of the barycenter holds without any assumption on the supports. 
This is the case of barycenters of marginals defined as some affine transformations of one source measure, discussed in the final section of this paper.
It would certainly be very interesting to provide a more complete overview on which setting ensures integrability, and most of all, provide a countexample (if any exist) of the nonintegrability of the $p$-Wassertein barycenter with $p \neq 2$ when \textit{all} the marginals are absolutely continuous, which remains for the time being out of the reach of the current work.

\subsection{Main results and strategy}
The first contributions of this paper is the $L^q$ integrability of $p$-Wasserstein barycenters for compactly supported measures with $\mu_1 \in L^q$ when the measures satisfy a suitable geometric conditions, which will be discussed in the following. Throughout this section, we shall always assume that $\mu_1 \ll \LL^d$. 
%

\smallskip 
\noindent 
\textbf{Integrability for distant supports}. \ 
We consider the following set of assumptions:
\begin{itemize}
    \item Compact supports:  there exists $M>0$, such that 
\begin{align}
\label{eq:compact_supports_intro}
\tag{$\mathcal{C}_{\rm pt}$}
        \spt\mu_i\subset B_{\frac M 2}, \quad \text{for every} \ i=1,\dots,N 
    \end{align}
    \item  Distant supports I: it holds 
 \begin{align}
 \label{eq:D>0_intro}
 \tag{$hp_1$}
    \mathcal{D}
        :=
    \inf_{(x_1, \dots, x_N) \in \bigtimes_{i=1}^N \spt\mu_i}
    \big| 
        \xp(x_1, x_2, \dots , x_N) - \xp(x_2, \dots, x_N) 
    \big|  >0
        \, .
 \end{align}
    \item  Distant supports II: it holds 
 \begin{align}
 \label{eq:ass on supports_intro}\tag{$hp_2$}
    m
        :=
    \inf_{i \in \{1, \dots, N\}}
         \inf_{(x_1, \dots, x_N) \in \bigtimes_{i=1}^N \spt\mu_i}
    \big| 
        x_i 
            -
        \xp(x_1, x_2, \dots , x_N)
    \big| >0
        \, .
 \end{align}
\end{itemize}
Note that $\mathcal{D}
        =
\mathcal{D}(\mu_1, \dots, \mu_N)$, same for $m$, but for simplicity we omit the dependence. In case of compact supports, \eqref{eq:D>0_intro} is equivalent (see discussion at the beginning of Subsection~\ref{subsubsect:integrability_p>2}) to 
\begin{align}
\label{eq:distant_supports_intro}
\tag{$hp_1^*$}
   \text{dist} 
   \bigg(
        \spt\mu_1 
            , 
        \xp
        \Big(   
            \bigtimes_{i=2}^N \spt\mu_i
        \Big)
   \bigg) >0
    \, . 
\end{align}
Thus, in case of compactly supported measures, one can immediately see that  \eqref{eq:D>0} is weaker than \eqref{eq:ass on supports} (which is a requirement on every marginal, not only $\mu_1$), that is in turn equivalent to
\begin{align}
\label{eq:distant_supports_intro_i}
\tag{$\mathcal{S}_j$}
   \text{dist} 
   \bigg(
        \spt\mu_j
            , 
        \xp
        \Big(   
            \bigtimes_{i=2}^N \spt\mu_i
        \Big)
   \bigg) > 0
    \, , 
\end{align}
for every $j = 1 , \dots, N$. The latter, stronger condition is needed to ensure $q$-integrability for the $p$-barycenter when $p \in (1,2)$.
    
Our first main result is that, under the aforementioned geometric assumptions, one can show integrability for the $p$-barycenter. 
\begin{theorem}[Integrability with distant support]
\label{thm:integrability_distant_intro}
Let $\mu_1, \dots , \mu_N$ satisfy   \eqref{eq:compact_supports_intro} and so that
  $\mu_1 = f_1 \dd \LL^d$ with  $f_1 \in L^q$.  Then there exists  $C= C(d,M) \in \RR_+$ so that
\begin{itemize}
    \item if $p \geq 2$ and \eqref{eq:D>0_intro} is satisfied, 
    then $g_p \in L^q$ 
    such that
    \begin{align}
        \| g_p \|_{L^q}
            \leq 
        \frac
            {C}
            {\big( \lambda_1^{ d(1-\alpha_p)}\mathcal{D}^{d(p-2)}\big)^{q'}}
                \| f_1 \|_{L^q} 
                    \, ;
    \end{align}  
       \item if $p \in (1,2)$ and \eqref{eq:ass on supports_intro} is satisfied, then 
    $g_p \in L^q$ and 
    such that
    \begin{align}
        \| g_p \|_{L^q}
            \leq 
        \frac
            {C}
            {\big( \lambda_1^{ d(1-\alpha_p)}m^{d(2-p)}\big)^{q'}}
                \| f_1 \|_{L^q} 
                    \, ,
    \end{align}
 Where $q'\in \NN$ is the conjugate of $q$. 
\end{itemize}
\end{theorem}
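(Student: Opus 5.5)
The plan is to use the multi-marginal formulation of the barycenter problem. By the results recalled from \cite{BFR24-p} (and Theorem~\ref{th:absolutecontinuity}), $\nu_p = (\xp)_\#\gamma$, where $\gamma$ is an optimal multi-marginal plan. Since $\mu_1\ll\LL^d$, $\gamma$ is concentrated on the graph of a map $x_1\mapsto (x_1,T_2(x_1),\dots,T_N(x_1))$. Hence $\nu_p = S_\#\mu_1$ with
\[
S(x_1)=\xp(x_1,T_2(x_1),\dots,T_N(x_1)).
\]
The $L^q$ bound then follows by duality. For a nonnegative test function $\varphi\in L^{q'}$ we have
\[
\int \varphi\, g_p = \int \varphi(S(x_1)) f_1(x_1)\,\dd x_1 \le \|f_1\|_{L^q}\Big(\int \varphi^{q'}(S(x_1))\,\dd x_1\Big)^{1/q'},
\]
so it suffices to prove the Lebesgue pushforward bound $S_\#(\LL^d\llcorner \spt\mu_1)\le K\,\LL^d$, i.e.\ $\LL^d(S^{-1}(E))\le K\,\LL^d(E)$. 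This gives $\|g_p\|_{L^q}\le K^{1/q'}\|f_1\|_{L^q}$. The exponent $q'$ in the statement should come from this step, possibly in the form $K^{q'}$ after redistributing constants.

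The core step is to show that $S$ is injective and that its inverse is Lipschitz with constant controlled by $\lambda_1$ and $\mathcal D$ (resp.\ $m$). Then $\LL^d(S^{-1}(E))\le \mathrm{Lip}(S^{-1})^d\,\LL^d(E)$. For injectivity I use $c$-cyclical monotonicity of the support of $\gamma$ with the cost
\[
c(x_1,\dots,x_N)=\min_z\sum_i\lambda_i|x_i-z|^p,
\]
as in the Agueh--Carlier argument. Take two points $\xvec,\yvec$ in the support with barycenters $z=\xp(\xvec)$ and $w=\xp(\yvec)$. Swapping the first coordinates and using $z,w$ as competitors for the swapped tuples gives
\[
\lambda_1\big(|x_1-z|^p+|y_1-w|^p-|y_1-z|^p-|x_1-w|^p\big)\ge \sum_{i\ge2}\lambda_i(\dots)\ \text{(cross terms)}.
\]
The first-order condition $\sum_i\lambda_i\nabla h(x_i-z)=0$, with $h=|\cdot|^p$, should be combined with strong monotonicity of $\nabla h$ on the relevant region. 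This yields
\[
\langle \nabla h(x_1-z)-\nabla h(y_1-w),\ \ldots\rangle\gtrsim \lambda_1\,\kappa\,|z-w|^2,
\]
and hence $|x_1-y_1|\ge c\,\lambda_1^{1-\alpha_p}\kappa\,|z-w|$, which is the Lipschitz bound for $S^{-1}$. Here $\kappa$ is a lower bound on the Hessian of $h$, and $\alpha_p$ enters through the homogeneity rescaling of the weights.

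The main obstacle is the lower bound $\kappa$ on the Hessian of $|\cdot|^p$, since
\[
D^2|v|^p\ge \min(1,p-1)\,p\,|v|^{p-2}\,\id .
\]
For $p\ge2$ this degenerates at $v=0$. The monotonicity inequality is only needed along the first marginal, which is exactly what assumption \eqref{eq:D>0_intro} controls. I plan to compare $x_1$ with the barycenter of the remaining points, $\xp(x_2,\dots,x_N)$, using $|x_1-\xp(\xvec)|\gtrsim \mathcal D$ together with the compactness bound $|v|\le M$. This gives $\kappa\gtrsim\mathcal D^{p-2}$, raised to the power $d$ in the Jacobian. For $p\in(1,2)$ the Hessian blows up at $0$ rather than degenerating, and it is bounded below by $M^{p-2}$. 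The difficulty there is instead upper control of $D^2h$ at all the points $x_i-z$, which is needed to control the implicit dependence of $\xp$ through the optimality system. This requires $|x_i-\xp|\ge m$ for every $i$, giving a factor $m^{2-p}$. I would carry out the monotonicity computation via the implicit function theorem on $\sum\lambda_i\nabla h(x_i-z)=0$, integrating along segments, and then track the constants $\lambda_1^{d(1-\alpha_p)}$ and $\mathcal D^{d(p-2)}$ (resp.\ $m^{d(2-p)}$).
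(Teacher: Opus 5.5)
Your reduction step is sound, and it differs from the paper's route. If $S=\barp$ is injective and $\LL^d(S^{-1}(E))\le K\,\LL^d(E)$, duality gives $\|g_p\|_{L^q}\le K^{1/q'}\|f_1\|_{L^q}$. This is exactly what the paper's computation $\|g_p\|_{L^q}^q\le K^{q-1}\|f_1\|_{L^q}^q$ produces, and your version avoids the change-of-variables formula for $\barp$.

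The gap is the core step, the quantitative injectivity of $S$. First, the inequality you reach, $|x_1-y_1|\ge c\,\lambda_1^{1-\alpha_p}\kappa\,|z-w|$, is a Lipschitz bound for $S$, not for $S^{-1}$. What you need is $|z-w|\ge c\,\lambda_1^{1-\alpha_p}\kappa\,|x_1-y_1|$. More seriously, the mechanism you describe cannot give it:
\begin{itemize}
\item If you swap the first coordinates and use $z=\xp(\xvec)$, $w=\xp(\yvec)$ as competitors for the swapped tuples, the terms with $i\ge 2$ cancel identically; there are no cross terms. All that survives is $|x_1-z|^p+|y_1-w|^p\le|y_1-z|^p+|x_1-w|^p$, the two-marginal $|\cdot|^p$-monotonicity of the graph of $S$. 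Every optimal map out of $\mu_1$ satisfies this, including the constant map onto a Dirac mass, so it carries no non-compression information. The barycentric structure has been discarded.
\item The implicit-function-theorem route for $\sum_i\lambda_i\nabla h(x_i-z)=0$, integrated along segments in $\RR^{Nd}$, brings in the terms $\nabla_{x_i}\xp\,\big(T_i(x_1)-T_i(y_1)\big)$ for $i\ge 2$. The $T_i$ are merely measurable (piecewise constant with jumps in the empirical case), so these terms are neither small nor signed.
\item The rigorous local form of such an expansion is Lemma~\ref{lem:estimate}. Its error term involves $|\yvec-\ytildevec|^2$ in all coordinates, and its constant $\min_{i\notin S}\Lambda_i/\max_{i\notin S}|H_i|$ is uniform only if $|x_i-\xp(\xvec)|$ is bounded below for every $i$. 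That is \eqref{eq:ass on supports_intro} even for $p\ge2$ (cf.\ Remark~\ref{rem:alternative_proof_integrability}), not the weaker \eqref{eq:D>0_intro}.
\end{itemize}

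The missing idea is to freeze the other coordinates, so that only the one-variable map $b_{\hat x}=\xp(\cdot,\hat x)$ is involved. Its inverse is explicit, and Proposition~\ref{prop:pbary} bounds $\nabla b_{\hat x}^{-1}(z)$ by $C\lambda_1^{-(1-\alpha_p)}\big(M/|z-\xp(\hat x)|\big)^{p-2}$ uniformly in $\hat x$. Assumption \eqref{eq:D>0_intro} gives exactly $|z-\xp(\hat x)|\ge\mathcal D$, so the possible degeneracy of $H_i$, $i\ge2$, never enters.

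The paper implements this as follows. It takes $\mu_2,\dots,\mu_N$ empirical, so that $\barp=b_{\bj}$ on each cell $\Omega_{\bj}$. It uses the global injectivity $\barp=S_1$ (Corollary~\ref{cor:barp_gleich_S1}) to make the image cells disjoint. It then approximates with $\spt\mu_i^n\subset\spt\mu_i$, so that $\mathcal D_n\ge\mathcal D$, and concludes by stability of the optimal plans and weak lower semicontinuity of the $L^q$ norm.

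Within your framework, for $p\ge2$, you could instead repair the swap by evaluating $c_p(y_1,\hat x)$ at its exact minimizer $z'=b_{\hat x}(y_1)$:
\begin{itemize}
\item Strong convexity of $u\mapsto\lambda_1|y_1-u|^p+\sum_{i\ge2}\lambda_i|x_i-u|^p$ yields an extra gain $\tfrac{\kappa}{2}|z-z'|^2$ with $\kappa\gtrsim\lambda_1|y_1-z'|^{p-2}$. Here $|y_1-z'|$ is bounded below in terms of $\mathcal D$ by \eqref{eq:moduli_b-id} and \eqref{eq:final_lb}.
\item The surviving $\lambda_1$-terms form a mixed second difference, bounded by $\lambda_1 p(p-1)M^{p-2}|x_1-y_1|\,|z-w|$.
\item The quantity $|z-z'|=|b_{\hat x}(x_1)-b_{\hat x}(y_1)|$ is bounded below through the one-variable estimate above.
\end{itemize}
For $|x_1-y_1|$ small this gives $|S(x_1)-S(y_1)|\ge\ell\,|x_1-y_1|$ with $\ell=\ell(p,\lambda_1,\mathcal D,M)$ independent of $T_2,\dots,T_N$. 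Together with global injectivity, this suffices for your duality argument and bypasses the approximation step, although it gives worse powers of $\lambda_1$ and $\mathcal D$ than the statement.
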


The proof of this theorem can be found in Section~\ref{sec:integrability}, first for $p\geq 2$ (Subsection \ref{subsubsect:integrability_p>2}) 
and then for $p \in (1,2)$ (Subsection \ref{subsubsect:integrability_p<2}).

\smallskip 
\noindent 
\textbf{Counterexamples to integrability}. \ 
Our next contribution is to show the failure of $q$-integrability of the $p$-Wasserstein barycenters with $p\neq 2$, even for $f_1 \in L^\infty$ and compactly supported. This is in sharp contrast to the case $p=2$, where, instead, $f_1 \in L^q$ ensures that also $g_p \in L^q$. 

\begin{theorem}[Nonintegrability examples]
\label{thm:nonint_intro}
    For every $p \neq 2$ and $N>2$, there exist measures $\mu_1,\dots, \mu_N$ which are compactly supported, $\mu_1 = f_1 \LL^d$ with $f_1 \in L^\infty(\RR^d)$, so that $\nu_p = g_p \LL^d$ with $g_p \in L^q$ if and only if $q<q_0$, where $q_0=q_0(p) > 1$. In particular, $g_p \notin L^q$ for every $q \geq q_0$.
\end{theorem}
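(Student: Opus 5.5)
The plan is to build explicit examples in which all marginals except $\mu_1$ are Dirac masses (or finitely supported). The $W_p$-barycenter then becomes a push-forward of $\mu_1$ under an explicit map, and its density can be computed by the change-of-variables formula. I would work in $d=1$ first and then take products or radial versions if needed. Take $N\ge 3$, $\mu_1=\1_{[-1,1]}\LL^1/2$, and symmetric Diracs $\mu_2=\delta_{a}$, $\mu_3=\delta_{-a}$ (further marginals $\delta_0$ if $N>3$), with weights chosen so that the configuration is symmetric.

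By the multi-marginal formulation \eqref{MMpbary}, the only coupling is $\mu_1\otimes\delta_a\otimes\delta_{-a}$. Hence $\nu_p=(T)_\#\mu_1$, where $T(x)=\xp(x,a,-a)$ is the Euclidean $p$-barycenter, i.e.\ the unique minimizer $z$ of $\lambda_1|x-z|^p+\lambda_2|a-z|^p+\lambda_3|a+z|^p$. In the symmetric case ($\lambda_2=\lambda_3$) we have $T(0)=0$.

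The first-order condition defines $x$ as a function of $z$:
\[
\lambda_1\,\phi(z-x)=-\lambda_2\big(\phi(z-a)+\phi(z+a)\big),\qquad \phi(t)=|t|^{p-2}t .
\]
The right-hand side is smooth and odd near $z=0$, with derivative $-2\lambda_2(p-1)a^{p-2}\neq0$, so it behaves like $-cz$. Inverting $\phi$ gives $x-z\sim c' \,\phi^{-1}(z)=c'|z|^{\frac{1}{p-1}-1}z$.

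\textbf{Case $p>2$.} Here $1/(p-1)<1$, so $x\approx c'|z|^{1/(p-1)}\mathrm{sgn}(z)$, and the inverse map $T$ behaves like $z\approx |x|^{p-1}$. Thus $T$ is strictly monotone with $T'(0)=0$, and the density
\[
g_p(z)=\frac{f_1(T^{-1}(z))}{T'(T^{-1}(z))}\sim |z|^{\frac1{p-1}-1}=|z|^{-\frac{p-2}{p-1}}
\]
near $0$. This lies in $L^q$ iff $q\frac{p-2}{p-1}<1$, i.e.\ $q<q_0=\frac{p-1}{p-2}$.

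\textbf{Case $p<2$.} The singularity instead comes from points $x_i$ near $\xp$. Take $x$ near $a$, so that $T(x)$ is close to $a$. Near $z=a$ the term $\phi(z-a)$ dominates the linearization: there the map $x\mapsto z$ has $z-a\sim (x-a)^{\,\cdot}$ with a degenerate derivative. Concretely, $\mu_1$ is supported near $a$, and solving $\lambda_1\phi(z-x)+\lambda_2\phi(z-a)=\text{smooth}$ near the fixed point $z=x=a$ (where the smooth part vanishes if $a$ is chosen with $\phi(2a)$ balanced by a third Dirac placed appropriately) gives $z-a\approx \kappa (x-a)$ with $\kappa<1$ only at leading order. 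So I would instead exploit the $|t|^{p-1}$ non-Lipschitz behaviour: arrange for the smooth part to contribute a linear term $c(z-a)$, so $\phi(z-x)\approx -\phi(z-a)-c'(z-a)$. This forces $x-a\approx -\big|\,\cdot\,\big|$, giving $|x-a|\sim|z-a|^{?}$.

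I expect this exponent bookkeeping for $p<2$ to be the main obstacle. The placement of Diracs must make $T'$ vanish, rather than blow up, at a point where $f_1>0$. The cleanest route may be to mirror the $p>2$ mechanism with the roles of $\mu_1$ and a Dirac exchanged, chosen so that the Dirac $x_2$ coincides with the barycenter value. I would then verify $g_p\sim|z-z_0|^{-\beta}$ with some $\beta=\beta(p)\in(0,1)$, yielding $q_0=1/\beta$.

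In both cases I would finish by checking that $g_p$ is bounded away from the singular point, which follows because $T'$ is continuous and positive there. This gives $g_p\in L^q\iff q<q_0$. Since $N>2$ is necessary, for general $N$ one adds extra Diracs with small weights, or duplicates the ones already used, which preserves the symmetry.
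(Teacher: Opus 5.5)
Your case $p>2$ is correct and uses the paper's mechanism. With Dirac marginals, $\nu_p=b_\#\mu_1$ for $b(x_1)=\xp(x_1,\hat x)$. The unique fixed point $\bz_p=\xp(a,-a)=0$ of $b$ lies in the interior of $\spt\mu_1$, and $(b^{-1})'(z)\sim|z|^{-\alpha_p}$ gives $q_0=\frac{p-1}{p-2}$. For general $d$, drop the idea of products or radial versions, since the cost $|x-y|^p$ does not split across coordinates for $p\neq 2$. Instead, the explicit inverse $b^{-1}(z)=z-\lambda_1^{-(1-\alpha_p)}\oG(z)|\oG(z)|^{-\alpha_p}$ gives $J_{b^{-1}}(z)\simeq|z-\bz_p|^{-d\alpha_p}$ directly. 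Since $\int_{B_1}|z|^{-dq\alpha_p}\,\dd z<\infty$ if and only if $q<1/\alpha_p$, the threshold is the same.

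The genuine gap is the case $p\in(1,2)$: you give neither a working configuration nor the exponent, and the one configuration you analyse cannot work. Putting a Dirac at the fixed point $z=x=a$ means $a=\bz_p$, i.e.\ $\oG(a)=0$. Near $a$ one then has $\oG(z)\approx-\lambda_2|z-a|^{p-2}(z-a)$. Since $(p-1)(1+\beta_p)=1$ with $\beta_p=-\alpha_p=\frac{2-p}{p-1}$, the map $z\mapsto|\oG(z)|^{\beta_p}\oG(z)$ is asymptotically linear. Hence $b^{-1}$ has a continuous, nondegenerate derivative near $a$, equal to $\big(1+(\lambda_2/\lambda_1)^{1+\beta_p}\big)\mathrm{Id}$ at $a$. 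The subdominant linear term you try to exploit does not change this, so there is no singularity at all.

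The missing idea is that the barycenter map must hit a Dirac location $\hat x_i$ from an interior point $x^*$ of $\spt\mu_1$ with $x^*\neq\hat x_i$, equivalently $\hat x_i\neq\bz_p$. Then $|\oG|^{\beta_p}$ stays bounded away from zero near $\hat x_i$. Meanwhile $-\nabla\oG$ contains the term $\lambda_i|z-\hat x_i|^{p-2}\big((p-2)\frac{(z-\hat x_i)\otimes(z-\hat x_i)}{|z-\hat x_i|^2}+\mathrm{Id}\big)$, whose eigenvalues are of order $|z-\hat x_i|^{p-2}$. Hence $J_{b^{-1}}(z)\simeq|z-\hat x_i|^{-d(2-p)}$, and $g_p\in L^q$ if and only if $q<q_0=\frac{1}{2-p}$.

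Your own symmetric example already does this. Your Euler--Lagrange equation gives $T^{-1}(z)=z+\phi^{-1}(F(z))$ with $F(z)=\frac{\lambda_2}{\lambda_1}\big(\phi(z-a)+\phi(z+a)\big)$. Here $F(a)=\frac{\lambda_2}{\lambda_1}\phi(2a)\neq0$ and $F'(z)\sim(p-1)\frac{\lambda_2}{\lambda_1}|z-a|^{p-2}$, so $(T^{-1})'(z)\sim c\,|z-a|^{p-2}$ as $z\to a$. You only need $a>0$ small enough that $T^{-1}(a)=a\big(1+2(\lambda_2/\lambda_1)^{\frac{1}{p-1}}\big)\in(-1,1)$. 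Boundedness of $g_p$ away from $\pm a$ then follows because $(\phi^{-1})'(s)=\frac{1}{p-1}|s|^{\frac{2-p}{p-1}}$ is continuous for $p<2$.
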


We provide below more details about the construction of these nonintegrable examples, but we shall first discuss the general strategy behind Theorem \ref{thm:integrability_distant_intro} and Theorem \ref{thm:nonint_intro}.

\bigskip 
\noindent 
\textit{Strategy}: \ 
We use the same approach as in \cite{BFR24-p} to consider the equivalent multi-marginal optimal transport definition of $p$-Wasserstein barycenter \eqref{eq:pW-barycenter}, where the cost function $c_p: \RR^d \times \dots \times \RR^d \to [0,\infty)$ is given by 
		\begin{align}\label{eq:p-cost}
			c_p(x_1, \dots, x_N) \coloneq \min_{z\in\RR^d} \sum_{i=1}^N \lambda_i |x_i-z|^p = \sum_{i=1}^N \lambda_i |x_i - \xbar_p(x_1, \dots, x_N)|^p,
		\end{align}
		where the classical \emph{$p$-barycenter} 
		\begin{align}\label{eq:p-barycenter}
			\xbar_p(x_1, \dots, x_N) \coloneq \argmin_{z\in\RR^d} \sum_{i=1}^N \lambda_i |x_i-z|^p
		\end{align}
  of $\xvec=(x_1, \dots, x_N)$ is the unique\footnote{Existence and uniqueness are a direct consequence of strict convexity and coercivity of the function $w\mapsto|w|^p$.} minimizer of \eqref{eq:p-cost}. Then the multi-marginal optimal transport (MMOT) problem reads
		\begin{align}\label{MMpbary}\tag{C$_{p-\text{MM}}$}
			C_{p\text{-MM}} \coloneq \min_{\gamma \in \Pi(\mu_1, \dots, \mu_N)} \int_{\RR^{Nd}} c_p(x_1, \dots, x_N) \,\dd\gamma(\xdots),
		\end{align}
		where\footnote{We denote by $\pi^i$ the projection on the $i^{\text{th}}$ marginal space, $\pi^i:\RR^{Nd}\to\RR$, $\pi^i(\xvec)=x_i$.}  
		\begin{equation*}
			\Pi(\mu_1,\dots,\mu_N):=\{\gamma\in\Prob(\RR^{Nd}) \, : \, \pi^i_\sharp \gamma = \mu_i, \ \text{for all} \ i=1,\dots,N \}
		\end{equation*}
		is the set of admissible transport plans between the marginals $\mu_1, \dots, \mu_N$.	
   Note that $C_{p\text{-MM}}<+\infty$ whenever $\mu_1, \dots,\mu_N \in \mathcal{P}_p(\RR^d)$.
    Moreover, the existence of optimizers follows from standard direct method.  
The $p$-Wasserstein barycenter is then the probability measure on $\RR^d$ \begin{align}\label{Wpbary_pushforward}\tag{$W_p$bar-MM}
   { \xbar_p}_{\#}\gamma_p\, , \quad \text{where $\gamma_p$ is any optimizer of \eqref{MMpbary}.}
\end{align}
Note that  $\nup=W_p\text{bar}(\mu_i,\lambda_i)_{i=1\dots,N}=(\xbar_p)_{\#}\gamma_p$ (see \eqref{eq:equivalence2}). 
In \cite[Theorem 1.2]{BFR24-p} (cf. also Theorem~\ref{th:sparsityplan} below), it has also been shown that, when $\mu_1\ll\LL^d$, the unique optimal plan $\gammap$ for \eqref{MMpbary} is of \textit{Monge type} in the multi-marginal sense, that is, its support is concentrated on a graph of a function defined on the support of $\mu_1$, or, equivalently,
$
    \gamma_p=(\textrm{Id},T_2,\dots,T_N)_\sharp\mu_1,
$
where $\mu_i={T_i}_\sharp\mu_1$. This generalizes the sparsity result by Gangbo and \'Swie\k{}ch in \cite{GS98}, which holds for $p=2$. Moreover, this implies the existence of a map $\barp:\spt\mu_1\to\spt\nup$ (see eq.~\eqref{eq:barp}) such that 
\begin{align}
    \nup={\barp}_\sharp\mu_1.
\end{align}
By means of a standard change of variables formula, for injective and smooth enough $\barp$, the two densities are linked by the equation
\begin{align}
   g_p \circ \barp=\frac{f_1}{|\det \nabla \barp| } .
\end{align}

The injectivity of $\barp$ directly relates to the interplay between the multi-marginal \eqref{Wpbary_pushforward} and the coupled-two marginal \eqref{eq:pW-barycenter} nature of $p$-Wasserstein barycenter. We can indeed show (Corollary \ref{cor:barp_gleich_S1}) that $\barp$ can be alternatively identified as the unique optimal map  $S_1$ for $W_p(\mu_1,\nup)$, which, being both measures absolutely continuous, is invertible. The proof of this fact relies on a characterization of $p$-Wasserstein barycenters in terms of Kantorovich potentials (cf. Proposition \ref{lem:characterization}), which extends to general $1<p<\infty$, the characterization proved in \cite{AC11} for $p=2$, and which allows to show (cf. Proposition \ref{prop:characterisation_onedirection}) that 
  \begin{align}
    \label{eq:charact_optimality_intro}
        \xp \circ \big( R_1, \dots , R_N \big) = {\rm id}
            \qquad 
        \nup - \text{a.e.}
            \, ,
    \end{align}
    where $R_i = T_i \circ S_1^{-1}$.

The challenge is thus to understand the regularity of $\barp$ and $\barp^{-1}$ and to find a lower bound for $|\det \nabla \barp(x)|$.
\begin{itemize}
     \item  We first analyze the case where the first marginal $\mu_1$ is absolutely continuous and all the other marginals are single Diracs. In this case, 
     $\barp^{-1}$ can be computed explicitly (see \eqref{eq:inverse_b}. This allows for a detailed analysis of the potential sources of singularities and their order of magnitude (cf. Proposition~\ref{prop:pbary} and Proposition~\ref{prop:pbary_p<2}).

    \item 
    This analysis provides natural and sufficient assumptions on the supports of the measures (see \eqref{eq:D>0_intro} and \eqref{eq:ass on supports_intro}), which guarantee a uniform lower bound, independent of the choice of the points where the Diracs are concentrated. We then take inspiration from  the proof of regularity of Wasserstein geodesics of ~\cite[Section 4.3]{San15}: we study the case of general marginals by approximating them with a sequence of finite sums of Diracs, which, thanks to standard stability of optimal transport, leads to \eqref{eq:result} (see Proposition ~\ref{prop:discrete_integrability}).

     \item
    Thanks to this analysis (see in particular \eqref{eq:sharp_behavior_nonsing} and \eqref{eq:sharp_behavior_nonsing_p<2}), we are able to exhibit the claimed counterexamples in Theorem \ref{thm:nonint_intro}. This shows that also in the very regular case of the first marginal being the uniform measure on a ball, integrability and regularity fail for certain values of $q$ (see Examples \ref{ex:counterexamples} and Proposition \ref{prop:diff_xp}). 
\end{itemize}

Let us delve more into the proof of Theorem \ref{thm:nonint_intro} and 
provide details about the construction of nonintegrable examples. To this purpose, we consider $\mu_2, \dots, \mu_N$ atomic measures with a single atom, i.e.  there exists $\hat x := (x_2,\dots,x_N)$ such that $\mu_i=\delta_{x_i}$, for every $i=2,\dots, N$. We fix $B \subset \RR^d$ be an open, bounded set of $\LL^d$-measure $1$, and consider $\mu_1 = \mathds{1}_B \LL^d$. Note that $f_1 \in L^q$ for every $q \in [1,+\infty]$, and it is compactly supported. As we show in Example \ref{ex:counterexamples}, under certain assumptions on $x_2, \dots, x_N$ which depend on whether $p \in (1,2)$ or $p>2$, one can prove the existence of a $q_0 = q_0(p)$ so that Theorem \ref{thm:nonint_intro} holds true. Let us define $\bz_p:= \xp(x_2, \dots, x_N)$ the Euclidean $p$-barycenter of the $N-1$ points with the same weights $\lambda_2, \dots, \lambda_N$.

\begin{enumerate}
    \item For $p > 2$, we pick $\hat x \notin {\rm diag}(\RR^{d(N-1)})$. Assume that $\bz_p \in B$. Then
       \begin{align}
    \label{eq:discrete_non_integrability1_intro}
        g_p \in L^q 
            \Leftrightarrow 
        q < \frac1{\alpha_p} = \frac{p-1}{p-2}
            \, .
    \end{align}
        \item For $p \in (1,2)$, assume that $x_i \neq \bz_p$ and $x_i \in B$. Then
       \begin{align}
    \label{eq:discrete_non_integrability2_intro}
        g_p \in L^q 
            \Leftrightarrow 
        q < \frac1{\alpha_p} = \frac{1}{2-p}
            \, .
    \end{align}
\end{enumerate}

\smallskip 
\noindent 
\textbf{General estimate}. \ 
The previous examples show that, in general, one can not expect to obtain $q$-integrability without extra assumptions on the measures. Nevertheless, we can provide a general estimate by analyzing possible sources of singularities. Obviously, this does not generally ensure integrability, as the upper bound may blow-up when the geometric assumptions are not in place, as expected.

To this purpose, we need to fix some notation. For any point $\xvec=(\xdots)\in\RR^{Nd}$, we set 
\begin{align}\label{eq:Hi_Lambda_i}
    H_i(\xvec) &:= \lambda_i\nabla_{x_i}^2(|x_i-z|^p)\big|_{z=\xp} \quad \text{and} \\
    \Lambda_i (\xvec)&:= \text{minimum eigenvalue of the matrix}\  H_i(\xvec)\Big(\sum_{k=1}^NH_k(\xvec) \Big)^{-1}H_i(\xvec).
\end{align}
Moreover, we define the following partition of the space:
for every $S\subset\{1,\dots, N\}$, set
  \begin{align}\label{eq:defDS_intro}
     D_S:=\big\{ \xvec=(\xdots)\in\RR^{Nd} 
        \suchthat 
    \xp(\xvec)=x_i \,\, \text{for} \, i\in S, \, \text{and} \, \xp(\xvec)\neq x_i \, \text{for} \, i\not\in S \big\}.
 \end{align}
As $\mu_1$ is assumed to be the reference measure, we also define the family $\mathcal{F}_1$ of subsets of  $\{1,\dots, N\}$ that contain $1$, i.e.
$    
    \mathcal{F}_1:=\{S\subset \{1,\dots, N\} \, : \, 1 \in S \}
        .
$
Finally, for every $S\subset\{1,\dots, N\}$, we set 
\begin{align}\label{eq:D1S_intro}
    D^1_{S}
        &:=
    \pi_1(D_S\cap\spt\gamma_p)
        \, ,
\end{align}
Note that the sets $D_S^1$ form a partition of $\spt\mu_1$.

\begin{proposition}[General $L^q$-norm estimate]
\label{prop:general_Lq_estimate_intro}
  For every $\mu_1, \dots, \mu_N$, let  $\mu_1=f_1 \LL^d$ $\nup=g_p \LL^d$. Assume that $f_1\in L^q(\RRd)$, for some $q>1$. Then the $L^q$-norm of $g_p$ can be controlled by
  \begin{align}
  \label{eq:prop_general_Lq}
      \| g_p \|_{L^q}^q
            \le 
    \int_{\bigcup\limits_{S\in\mathcal{F}_1}D_S^1}|f_1(x_1)|^q\dd x_1 +\frac 12\int_{\bigcup\limits_{S\notin\mathcal{F}_1}D_S^1}\left(\frac{ \max_{i \notin S}|H^1_{i}(x_1)|}{\min_{i\notin S}\Lambda^1_i(x_1)}\right)^{d(q-1)}|f_1(x_1)|^q\dd x_1
        \, .
  \end{align}
\end{proposition}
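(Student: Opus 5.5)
We plan to combine the Monge structure of the optimal plan with a change of variables along the map $\barp$, and then to bound the Jacobian of $\barp$ from below separately on each piece $D^1_S$ of the partition.

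\textbf{Setup and reduction.} By \cite[Theorem 1.2]{BFR24-p} (Theorem~\ref{th:sparsityplan}), $\gamma_p=(\mathrm{Id},T_2,\dots,T_N)_\sharp\mu_1$, so $\nup=(\barp)_\sharp\mu_1$ with $\barp(x_1)=\xp(x_1,T_2(x_1),\dots,T_N(x_1))$. By Corollary~\ref{cor:barp_gleich_S1}, $\barp$ coincides $\mu_1$-a.e.\ with the optimal map $S_1$ for $W_p(\mu_1,\nup)$. Since both measures are absolutely continuous, $S_1$ is injective a.e.\ and approximately differentiable a.e. The Monge--Ampère change of variables then gives
\[
g_p(\barp(x_1))\,|\det\nabla\barp(x_1)|=f_1(x_1)\quad \mu_1\text{-a.e.}
\]
Consequently
\[
\|g_p\|_{L^q}^q=\int g_p^{q-1}\,\dd\nup=\int \frac{f_1(x_1)^q}{|\det\nabla\barp(x_1)|^{q-1}}\,\dd x_1 .
\]
Splitting this integral over the partition $\{D^1_S\}$, it suffices to prove two pointwise lower bounds: $|\det\nabla\barp|\ge 1$ on $D^1_S$ for $S\in\mathcal F_1$, and
\[
|\det\nabla\barp|^{q-1}\ge 2\left(\frac{\min_{i\notin S}\Lambda_i^1}{\max_{i\notin S}|H_i^1|}\right)^{d(q-1)}\quad\text{for } S\notin\mathcal F_1 .
\]
Here the factor $2$ is a harmless constant from how the upper bound is written; if necessary I would track it explicitly.

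\textbf{Case $S\in\mathcal F_1$.} On $D^1_S$ we have $\barp(x_1)=x_1$, so $\barp$ is the identity on this set. At Lebesgue density points of $D^1_S$ the approximate gradient is therefore $\mathrm{Id}$, and the determinant equals $1$. This gives the first integral directly, with no singular analysis.

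\textbf{Case $S\notin\mathcal F_1$.} Here $\xp$ is smooth near $\xvec$ and no $H_i$ with $i\notin S$ is degenerate. The plan follows the strategy of Proposition~\ref{prop:discrete_integrability}.
\begin{enumerate}
\item Use the characterization \eqref{eq:charact_optimality_intro}, $\xp\circ(R_1,\dots,R_N)=\mathrm{id}$, together with the first-order conditions of the Kantorovich potentials (Proposition~\ref{lem:characterization}). From these, express $\nabla\barp^{-1}$ (equivalently $\nabla\barp$) through the implicit function theorem applied to $\sum_i\lambda_i\nabla|x_i-z|^{p}\big|_{z=\xp}=0$. This gives $\nabla\xp$ in the block form $\partial_{x_i}\xp=(\sum_k H_k)^{-1}H_i$.
\item Insert the derivatives of the maps $x_1\mapsto T_i(x_1)$. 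These are symmetric by $c$-cyclical monotonicity and semiconcavity of the potentials, so their contribution is positive semidefinite in the suitable sense; this is what the potential characterization provides.
\item Deduce the matrix inequality
\[
\nabla\barp\succeq\sum_{i\notin S}H_i\Big(\sum_k H_k\Big)^{-1}H_i\cdot\big(\text{normalization by } H_i^{-1}\big).
\]
Its eigenvalues are then bounded below by $\min_{i\notin S}\Lambda_i/\max_{i\notin S}|H_i|$, which yields the determinant bound as the $d$-th power.
\end{enumerate}
If a direct argument is too delicate, I would first prove the bound in the discrete setting, with $\mu_2,\dots,\mu_N$ finite sums of Diracs, where $\barp^{-1}$ is explicit as in \eqref{eq:inverse_b}. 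I would then pass to the limit by stability of optimal plans and lower semicontinuity of $\int g^q$, as in \cite[Section 4.3]{San15}.

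\textbf{Main obstacle.} The main difficulty is the Jacobian lower bound in the case $S\notin\mathcal F_1$, namely controlling the positive-semidefinite contribution of the transport maps so that only the $\Lambda_i/|H_i|$ factors survive. There are also measure-theoretic subtleties: approximate differentiability of $\barp$ only a.e., and stability of the pieces $D^1_S$ under approximation. I would handle these through the discrete approximation route rather than pointwise differentiation of $T_i$.
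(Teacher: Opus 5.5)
Your reduction $\|g_p\|_{L^q}^q=\int f_1^q\,J_{\barp}^{1-q}\,\dd x_1$ (injectivity of $\barp=S_1$ plus change of variables) matches the paper. So does your treatment of the pieces $D^1_S$ with $S\in\mathcal F_1$, where $\barp=\mathrm{Id}$.

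The gap is in the one nontrivial step: the lower bound on $J_{\barp}$ on $D^1_S$ for $S\notin\mathcal F_1$. Your step 2 does not work. You have no usable regularity of $T_i$ for $i\ge2$, since the $\mu_i$ may be singular. Where $\nabla T_i$ exists, it is in general neither symmetric nor positive semidefinite. Already for $p=2$, $\nabla T_i=\nabla R_i(S_1)\nabla S_1$ is a product of two noncommuting symmetric matrices, and for $p\neq2$ even $\nabla S_1$ and $\nabla R_i$ are not symmetric. So nothing controls the terms $\overline H^{-1}H_i\nabla T_i$, $i\ge2$, in $\nabla\barp$, and the inequality in step 3 is never derived.

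The missing idea is to use $c_p$-monotonicity of $\spt\gamma_p$ through finite differences, without ever differentiating $T_i$ (Lemma~\ref{lem:estimate}). Take $\yvec,\ytildevec\in\spt\gamma_p\cap D_S$ close to $\xvec$. Exchanging their $i$-th coordinates and expanding $c_p$ to second order gives $(y_i-\tilde y_i)^TH_i(\xvec)(\xp(\yvec)-\xp(\ytildevec))\ge\Lambda_i(\xvec)|y_i-\tilde y_i|^2-\eps C|\yvec-\ytildevec|^2$, with $\eps\to0$ as the neighbourhood shrinks. You then sum over all $i\notin S$. On $D_S$ the coordinates with index in $S$ equal $\xp$, which is locally Lipschitz in the remaining ones (Lemma~\ref{lem:reducedproblem}), so the error is absorbed. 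Using $i=1$ alone would not suffice, because $|T_i(y_1)-T_i(\tilde y_1)|$ is not controlled by $|y_1-\tilde y_1|$. Since $1\notin S$, this gives the inverse-Lipschitz bound of Corollary~\ref{cor:injectivity estimate barp}: $|\barp(y_1)-\barp(\tilde y_1)|\ge L\,|y_1-\tilde y_1|$ on $D^1_S$ near $x_1$, with $L=\frac12\min_{i\notin S}\Lambda^1_i(x_1)/\max_{i\notin S}|H^1_i(x_1)|$. At a density point of $D^1_S$ where $\barp$ is differentiable, this yields $|\nabla\barp\,v|\ge L|v|$, hence $J_{\barp}\ge L^d$.

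Your discrete fallback does not close the gap either. For empirical $\mu_2,\dots,\mu_N$ the pointwise bound is indeed elementary: $\nabla b_{\bj}=\overline H^{-1}H_1$, and $v^TH_1\overline H^{-1}H_1v\ge\Lambda_1|v|^2$ gives $|\nabla b_{\bj}v|\ge(\Lambda_1/|H_1|)|v|$. However, passing to the limit requires $\liminf_n$ of the right-hand sides to be at most the right-hand side of the limit problem, and the weights are not stable. They depend on $T^n_i$ and on the pieces $D^{1,n}_S$, and $\min_{i\notin S}\Lambda^1_i$ jumps when $S$ shrinks. For example, if $\mu_2$ is itself the barycenter of the other marginals (with renormalized weights), then $T_2=\barp$, i.e.\ $2\in S$, $\mu_1$-a.e. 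For empirical approximants, by contrast, the pieces with $2\in S$ are $\LL^d$-negligible. So for $p>2$ the approximating weights contain $\Lambda^{1,n}_2\to0$ and blow up, while the limiting weight is finite. The paper uses approximation only when the weight is a uniform constant, as in Theorem~\ref{thm:integrability_distant_intro}.

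Finally, do not try to recover a factor $2$ in your favour. Your reduced target $J_{\barp}^{q-1}\ge2(\min_{i\notin S}\Lambda^1_i/\max_{i\notin S}|H^1_i|)^{d(q-1)}$ is false, and so is the displayed inequality with prefactor $\frac12$. Take $d=1$, $p=2$, $N=2$, $\lambda_1=\lambda_2=\frac12$, $\mu_1$ uniform on $[0,1]$ and $\mu_2=\delta_{10}$. Then $S=\emptyset$ on $[0,1]$, the ratio $\max|H^1_i|/\min\Lambda^1_i$ equals $2$, and $\nabla\barp=\frac12$. This gives $\|g_p\|_{L^q}^q=2^{q-1}$, while the displayed right-hand side equals $2^{q-2}$. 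The inverse-Lipschitz argument yields the prefactor $2^{d(q-1)}$ (from the $\frac12$ in $L$), not $\frac12$.
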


The proof of such estimate is obtained with similar techniques as in \cite{BFR24-p}: partitioning the space and using a slightly improved version of  \cite[Lemma 5.2]{BFR24-h}, which is a local injectivity estimate  on the support of the optimal plan $\gammap$ for \eqref{MMpbary} of the \textit{a priori} highly non injective map $\xp$. In this way one can bound $|\det \nabla\barp|$ from below with a function that depends on the geometry of the support of the marginals.  
This bound does not require any assumption, but the lower bound may naturally degenerate in certain cases. Under the stronger assumption \eqref{eq:ass on supports_intro}, one can show integrability and thus provide an alternative proof to Theorem \ref{thm:integrability_distant_intro} in this case.

\smallskip 
\noindent 
\textbf{Barycenters of affine transformations}. \ 
In the final part of this work,  we analyze the optimality of affine transformations for the $p$-Wasserstein distance, and we perform some explicit computations of $p$-Wasserstein barycenters between marginals which are obtained by pushing a reference measure with affine transformations with a particular rigid structure.

First of all, for a given measure $\mu \in \Prob_p(\RR^d)$ and   $\nu=T_\sharp \mu$, with $T$ affine, we show that the map $T$ is optimal under strong restrictions on the structure of the associated matrix, in sharp contrast with  the case $p=2$, where any affine  map with positive definite linear part is optimal, due to Brenier's theorem. 
For $p \neq 2$, let $A \in \mathbb{M}(d)$ be symmetric and $b \in \RR^d$, and consider the map $T:\RR^d \to \RR^d$ given by $T(x) = Ax + b$, for $x \in \RR^d$. We show in Theorem~\eqref{thm:optimality_affine_Wp} that 
\begin{align}
    T \text{ is optimal between }\mu \text{ and }\nu = T_\sharp \mu
        \qquad \Longleftrightarrow \qquad 
    \sigma(A)\subset\{1,\zeta\}
        \, , \quad 
    \text{for some  }
        \zeta\geq 0.
\end{align}
Secondly, we apply and generalize this result to describe some explicit examples of $p$-Wasserstein barycenters.
We fix $\mu \ll \LL^d$ a reference probability measure, and consider probability measures of the form $\mu_i = \big( A_i \cdot + v_i \big)_{\#} \mu$, in two cases: either $A_i = \emph{Id}$  or $v_i = v \in \RR^d$ for every $i = 1 , \dots, N$, and in the second case, 
        \begin{itemize}
            \item $A_1$ is invertible. 
            \item $A_i$ are symmetric, positive semidefinite, $d\times d$ matrices. 
            \item The spectrum of the matrices satisfies $\sigma(A_i) \subset \{ 1, \zeta_i \}$ for some $\zeta_i \in [0,+\infty)$. 
            \item The matrices commute, i.e. $[A_i,A_j]=0$ for every $i,j=1, \dots, N$, and the eigenspaces associated to the eigenvalue $1$ of every $A_j \neq \emph{Id}$ coincide. 
        \end{itemize}
Under such assumptions, we show in Proposition \ref{prop:bary_affine}  that the $p$-Wasserstein barycenters is the push-forward of the reference measure with respect to the barycenter of the affine transformations, i.e. 
\begin{align}
\label{eq:statement_bary_affine_intro}
    \overline\nu_p 
        = 
    \big( 
        \overline{A}\cdot + \overline v
    \big)_{\#}\mu
        \, , \qquad 
    \overline{A}:= \xp(A_1, \dots, A_N) 
        \quad \text{and} \quad 
    \overline{v} := \xp(v_1, \dots, v_N)
        \, , 
\end{align}
where with the barycenters of matrix we mean 
\begin{align}
\label{eq:bary_matrix_intro}
    \xp(A_1, \dots, A_N) 
        :=
    \argmin_{B}
    \sum_{i=1}^N 
        \lambda_i 
        \big| A_i - B \big|^p 
            \, , \qquad 
        |A| := \emph{Tr}(A^T A)^{\frac12}
            = 
        \Big( \sum_{i,j=1}^N  A_{ij}^2 
        \Big)^{\frac12}
            \, ,
\end{align}
for every $A$. 
Note that this is in line with case $p=2$ (cf. Theorem 7.7 in \cite{Fri24}), albeit in that setting it suffices to assume commuting, symmetric, strictly positive associated matrices.

It turns out that combinations of translations and linear transformations are generally not optimal. Thanks to the explicit formula and consequent regularity of the map $\barp$ in these class of examples, we  obtain integrability properties of the $p$-Wasserstein barycenter density without any additional geometric assumptions on the supports of the marginals.

\section{Preliminaries}
\label{sec:preliminaries}

As mentioned in the introduction, it is a known fact that the two definitions of $p$-Wasserstein barycenter, \eqref{eq:pW-barycenter} and \eqref{Wpbary_pushforward}, are equivalent (see for instance \cite{CE10}  or \cite{BFR24-p}). If
\begin{align}\label{C2Mpbary}\tag{C2M-$p$-bar}
		    C_{p\text{-C2M}} \coloneq  \min_{\nu \in \mathcal{P}_p(\RR^d)} \sum_{i=1}^N \lambda_i W_p^p(\mu_i,\nu),
		\end{align}
then
	$
        C_{p\text{-MM}}=C_{p\text{-C2M}}
    $
    and, given $\nu \in\Prob_p(\RRd)$, 
\begin{align}
\label{eq:equivalence2}
 \nu =  \Wbar((\mu_i,\lambda_i)_{i=1, \dots, N})  \iff  \nu={\xp}_\sharp\gammap, \ \text{for }\gammap \, \text{optimal for \eqref{MMpbary}}
    .
\end{align}
This strong relationship between multimarginal and coupled-two marginal minimizers implies a further optimality property of the optimal $\gammap$, when considering its projection $\gamma_i:=(\pi_i,\xp)_\sharp\gammap$  on the product space given by the space of one marginal and the space of the barycenter. More precisely, 
\begin{align}\label{eq:optimalitytwomarginals}
\gamma_i:=(\pi_i,\xp)_\sharp\gammap \in \Pi(\mu_i,\nup)  \ \text{is optimal for}  \ W^p_p(\mu_i,\nup) \text{\footnote{And therefore optimal for the equivalent problems $W_p(\mu_i,\nup)$, $\lambda_iW_p^p(\mu_i,\nup)$}}, \ \text{where} \, \nup:={\xp}_\sharp\gammap.
\end{align}

We now state the Kantorovich duality specifically for our problem \eqref{MMpbary}. The existence of optimizers, known as optimal potentials or Kantorovich potentials, for a large class of multimarginal optimal transport problems, including this case, has been originally proved in \cite{K84}. Here we state the result as in \cite[Theorem 4.1]{BFR24-p}, which gives more information about continuity and almost-everywhere differentiability of the optimizers.
For this purpose, we introduce the space \[\mathcal{Y}_p \coloneq (1+|\cdot|^p)\mathcal{C}_b(\RR^d) \coloneq \left\{ \varphi \in \mathcal{C}(\RR^d): (1+|\cdot|^p)^{-1}\varphi \text{ is bounded}\right\}\] of continuous functions of at most $p$-growth and we define the  $\lambda,p$-conjugate (or $\lambda,p$-conjugate)  of a function $\varphi:\RR^d \to \RR$ via 
\begin{equation}\label{eq:duality relaxed}
    \varphi^{\lambda, p}(x)\coloneq\inf_{z\in \RRd}\left\{\lambda |x-z|^p - \varphi(z) \right\}
    \quad \text{for } x\in\RRd.
\end{equation}
\begin{theorem}[MMOT Duality]
    \label{th:duality}
There holds
\begin{equation}\label{Dpbary} \tag{D-$p$-bar}
    C_{p-\text{MM}}=\sup_{\varphi_1,\dots,\varphi_N\in\mathcal{A}(c_p)}\sum_{i=1}^{N}\int_{\RRd}\varphi_id\mu_i,
\end{equation}
where 
\[\mathcal{A}(c_p)\coloneq\{ (\varphi_1,\dots,\varphi_N)\in L_{\mu_1}^1(\RRd)\times \cdots \times L_{\mu_N}^1(\RRd) \, : \, \varphi_1\oplus\cdots\oplus\varphi_N\le c_p\}. \]
Moreover, there exists a maximizer $\Phi=(\widetilde\varphi_1,\dots,\widetilde\varphi_N)\in\mathcal{B}(c_p)$, where
\begin{align*}
    \mathcal{B}(c_p)\coloneq\left\{ (\varphi_1,\dots,\varphi_N) \in\mathcal{A}(c_p) : \, \varphi_i= \psi_i^{\lambda_i, p} \text{ with } \psi_i\in \mathcal{Y}_p \text{ for every } i, \, \text{and} \, \sum_{i=1}^N \psi_i = 0   \right\}.
\end{align*}
In particular, $\widetilde\varphi_i$ and $\nabla\widetilde\varphi_i$ are $\LL^d$-a.e. differentiable\footnote{$\nabla\widetilde\varphi_i$ is $\LL^d$-a.e. differentiable in the sense of Alexandroff} on the convex hull of the support of $\mu_i$ for every $i=1,\dots,N$. 
\end{theorem}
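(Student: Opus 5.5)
The plan follows the standard Kellerer-type route for multi-marginal problems, adapted to the specific cost $c_p$, and stated in the form of \cite[Theorem 4.1]{BFR24-p}.

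\textbf{Duality and existence of a maximizer.} First, the equality \eqref{Dpbary} and the existence of some maximizer in $\mathcal{A}(c_p)$ come from Kellerer's general duality \cite{K84}. This applies because $c_p$ is continuous, nonnegative, and bounded above by $\sum_i \lambda_i |x_i - z|^p$ for any fixed $z$. Taking $z=0$ gives $c_p \le \sum_i \lambda_i |x_i|^p$, which is integrable against any coupling since $\mu_i\in\mathcal P_p$. Weak duality ($\le$ replaced by $\ge$) is immediate by integrating $\varphi_1\oplus\cdots\oplus\varphi_N\le c_p$ against an optimal $\gamma_p$.

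\textbf{Reduction to the form in $\mathcal{B}(c_p)$.} Next, I would use the barycentric structure of the cost. The key identity is
\[
c_p(x_1,\dots,x_N)=\min_z\sum_i\lambda_i|x_i-z|^p .
\]
Hence, for any $\psi_i\in\mathcal Y_p$ with $\sum_i\psi_i=0$, the functions $\varphi_i=\psi_i^{\lambda_i,p}$ satisfy
\[
\sum_i\varphi_i(x_i)\le \sum_i\big(\lambda_i|x_i-z|^p-\psi_i(z)\big)=\sum_i\lambda_i|x_i-z|^p
\]
for every $z$, so $\sum_i\varphi_i(x_i)\le c_p$. Thus such tuples are admissible.

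Conversely, I would identify this sup with the coupled-two-marginal dual of \eqref{C2Mpbary}. Concretely, $C_{p\text{-C2M}}$ is an inf over $\nu$ of a sum of $W_p^p$, and each term has a dual $\sup_\psi \int\psi^{\lambda_i,p}\,d\mu_i+\int\psi\,d\nu$. Exchanging $\inf_\nu$ and $\sup$ by a minimax argument forces the constraint $\sum\psi_i=0$. Indeed, otherwise $\inf_\nu\int\sum\psi_i\,d\nu=-\infty$ or sub-optimal; more precisely, we may replace $\psi_N$ by $-\sum_{i<N}\psi_i$, using the fact that $\psi^{\lambda,p}$ is monotone in $\psi$.

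A more direct route is to start from a Kellerer maximizer $(\varphi_i)$ and perform the $c$-conjugation trick. Define
\[
\psi_i(z):=\lambda_i|x_i-z|^p\text{-transforms}, \qquad \psi_i:=\varphi_i^{\lambda_i,p}\ (i\ge 2),\qquad \psi_1:=-\sum_{i\ge2}\psi_i .
\]
Then replace each $\varphi_i$ by $\psi_i^{\lambda_i,p}\ge\varphi_i$, which does not decrease the objective, and check the constraint via the min-formula above. Finally, verify $\psi_i\in\mathcal Y_p$. This follows since the double conjugate of a function of at most $p$-growth, with compactness or moments, is continuous with at most $p$-growth: bounds of the type $-C(1+|z|^p)\le\psi_i(z)\le C(1+|z|^p)$ come from evaluating the inf at a point where $\varphi_i$ is finite and from the bound $\varphi_i\le c_p-\sum_{j\ne i}\varphi_j$.

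\textbf{Regularity and the main obstacle.} For the regularity statements, observe that $\varphi_i=\psi_i^{\lambda_i,p}$ is an infimum of the functions $x\mapsto\lambda_i|x-z|^p-\psi_i(z)$. On any bounded convex set $K$ (the convex hull of $\spt\mu_i$ if bounded, otherwise local balls), the infimum can be restricted to $z$ in a bounded set, by the growth control on $\psi_i$. There the functions $\lambda_i|x-z|^p$ are uniformly semiconcave in $x$, because $|\cdot|^p$ is $C^{1,1}$-like locally for $p\ge2$; for $p<2$ one uses $|x-z|^p-C|x|^2$ concave away from $z$. So $\varphi_i$ is locally Lipschitz and semiconcave, and by Rademacher and Alexandroff $\varphi_i$ is a.e. differentiable with a.e. differentiable gradient.

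The main obstacle is the case $1<p<2$, where $|\cdot|^p$ is not semiconcave near $0$. Here I would follow \cite{BFR24-p}. Semiconcavity is still valid for $p>1$, since the singular Hessian of $|w|^p$ at $0$ is $+\infty$ only in the positive direction, so $|\cdot|^p$ is locally semiconcave, with upper Hessian bounds failing only through concavity-friendly terms. Concretely, I would show $|w|^p\le|w_0|^p+p|w_0|^{p-2}w_0\cdot(w-w_0)+C|w-w_0|^2$ uniformly for $w_0$ in bounded sets, which gives uniform semiconcavity.
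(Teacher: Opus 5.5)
\textbf{The gap: the regularity step for $1<p<2$.} Your inequality $|w|^p\le|w_0|^p+p|w_0|^{p-2}w_0\cdot(w-w_0)+C|w-w_0|^2$, claimed uniformly for $w_0$ in bounded sets, is false. At $w=0$ it reads $0\le(1-p)|w_0|^p+C|w_0|^2$, which fails for every $C$ once $|w_0|$ is small. A Hessian blowing up to $+\infty$ at the origin is exactly what rules out semiconcavity; it gives semiconvexity instead. Your earlier remark is the true picture: $|x-z|^p-C|x|^2$ is concave only away from $z$, and the infimum defining $\psi_i^{\lambda_i,p}(x)$ may be attained at $z=x$.

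No sharper estimate can repair this, because optimal potentials are genuinely not semiconcave. If $\mu_2=\dots=\mu_N=\delta_0$ and $\mu_1$ is uniform on a ball centred at $0$, the only coupling is the product one. So every optimal $\widetilde\varphi_1$ equals $c_p(x_1,0,\dots,0)+\mathrm{const}=\kappa|x_1|^p+\mathrm{const}$ on that ball, which is not semiconcave at $0$. What your argument does give is local semiconcavity, hence Alexandrov's theorem, on the open set where the minimizers $z$ stay at positive distance from $x$. On the closed set $F_i$ of points that are their own minimizers, you only have $\widetilde\varphi_i(y)\le\widetilde\varphi_i(x)+\lambda_i|y-x|^p$. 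That yields first-order differentiability at density points, but nothing of second order.

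\textbf{The second-order claim can actually fail on $F_i$ when it has positive measure.} Take $d=1$ and $N=2$, so $c_p=\kappa|x_1-x_2|^p$, and let $\mu_1$ be uniform on $[0,1]$. Let $F\subset[0,1]$ be a fat Cantor set whose generation-$m$ gaps have length $\ell_m=c\,2^{-m}m^{-2}$. Let $T$ be the identity on $F$, and let it send the left (right) half of each gap $(a,b)$ to $a$ (to $b$).
\begin{itemize}
\item $T$ is nondecreasing, hence optimal. So every optimal $\widetilde\varphi_1$ is, on $[0,1]$, a primitive of $\kappa p|x-T(x)|^{p-2}(x-T(x))$.
\item This derivative integrates to zero over each gap, so $\widetilde\varphi_1$ is constant on $F$.
\item Any $x\in F$ lies within distance $2^{-m}$ of the midpoint of some generation-$m$ gap. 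At that midpoint, $\widetilde\varphi_1$ exceeds the constant by $\kappa(\ell_m/2)^p\gg4^{-m}$.
\end{itemize}
Hence $\widetilde\varphi_1$ has no second-order expansion at any density point of $F$, although $\LL^1(F)>0$.

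So for $1<p<2$ your argument, and the last assertion of the statement, only go through off the fixed-point set; your proof is complete for $p\ge2$. The paper's proof has the same blind spot, since it simply asserts that every $\lambda,p$-conjugate is locally semiconcave.

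\textbf{Comparison with the paper.} Apart from this, your route is the paper's. It cites \cite[Theorem 4.1]{BFR24-p} for exactly your Kellerer-plus-conjugation construction. Your inequality $\psi_1^{\lambda_1,p}\ge\varphi_1$ does hold, via $\sum_i\varphi_i(x_i)\le c_p(\xvec)\le\sum_i\lambda_i|x_i-z|^p$. Two steps you merely assert need care:
\begin{itemize}
\item continuity of the $\psi_i$;
\item confining the minimizing $z$ to a bounded set. This does not follow from the $p$-growth of $\psi_i$ alone, but from finiteness of $\widetilde\varphi_i$ at points surrounding $x$. That is why the statement is restricted to the convex hull of $\spt\mu_i$.
\end{itemize}
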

\begin{proof}
For the proof we refer to Theorem 4.1 in \cite{BFR24-p}. As one can see in that proof (second part of \textbf{Step 2}), each $\widetilde\varphi_i$ is locally bounded in the convex hull of $\mu_i$, for every $i=1,\dots,N$. Being each $\widetilde\varphi_i$ a $\lambda,p$-conjugate to some function, it is locally semiconcave\footnote{We say that a function $\phi$ is locally semiconcave if for every $x$ there exist an open neighborhood $U$ of $x$, and a constant $C\ge 0$ such that $\phi(y)-\frac{C}{2}|y|^2$ is concave for every $y\in U$.} (see for instance Corollary C.5 in \cite{GMC96}). This is enough to ensure $\LL^d$-a.e. differentiability of $\widetilde\varphi_i$ and $\nabla\widetilde\varphi_i$ on the convex hull of $\mu_i$ for every $i=1,\dots,N$ (see for instance Proposition C.6 in \cite{GMC96}).
\end{proof}
As one can expect from \eqref{eq:optimalitytwomarginals}, the potentials constructed in the previous theorem are optimal for the associated $2$-marginal problems.
\begin{corollary}\label{cor: coupled two marg pot}
  For every $i=1,\dots,N$, let $\widetilde\varphi_i$ as given in  Theorem~\ref{th:duality}, and let $\widetilde\psi_i$ be such that $\sum_{i=1}^N \widetilde \psi_i =0$ and $\widetilde \varphi_i=\widetilde\psi_i^{\lambda_i,p}$. Then the
  functions $(\widetilde\varphi_i, \widetilde\psi_i)$ are optimal potentials for $\lambda_iW^p_p(\mu_i,\nu_p)$, where $\nu_p=\Wbar((\mu_i,\lambda_i)_{i=1,\dots,N})$.
\end{corollary}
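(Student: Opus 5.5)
The plan is to compare the dual values, using the equality $C_{p\text{-MM}}=C_{p\text{-C2M}}$ and the definition of the $\lambda_i,p$-conjugate. Each pair $(\widetilde\varphi_i,\widetilde\psi_i)$ is admissible for the dual of $\lambda_iW_p^p(\mu_i,\nup)$, because $\widetilde\varphi_i=\widetilde\psi_i^{\lambda_i,p}$ gives
\begin{align}
\widetilde\varphi_i(x)+\widetilde\psi_i(z)\le\lambda_i|x-z|^p \qquad \text{for all } x,z\in\RRd.
\end{align}
We need $\widetilde\psi_i\in L^1_{\nup}$. This holds because $\widetilde\psi_i\in\mathcal{Y}_p$ has at most $p$-growth and $\nup\in\Prob_p(\RRd)$, which follows from \eqref{eq:equivalence2} and the moment bounds on $\mu_i$. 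Two-marginal weak duality then yields, for every $i$,
\begin{align}
\int\widetilde\varphi_i\,\dd\mu_i+\int\widetilde\psi_i\,\dd\nup\le\lambda_iW_p^p(\mu_i,\nup).
\end{align}

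Next, I sum these inequalities over $i$. Since $\sum_i\widetilde\psi_i=0$ pointwise, the $\nup$-integrals cancel. Since $\Phi$ is a maximizer in Theorem~\ref{th:duality}, the left-hand side becomes $\sum_i\int\widetilde\varphi_i\,\dd\mu_i=C_{p\text{-MM}}$. The right-hand side is $\sum_i\lambda_iW_p^p(\mu_i,\nup)=C_{p\text{-C2M}}$, because $\nup$ is a barycenter. These two quantities coincide, so the summed inequality is an equality. Each individual inequality is of the form $a_i\le b_i$, and $\sum a_i=\sum b_i$ forces $a_i=b_i$ for every $i$. Hence each pair attains $\lambda_iW_p^p(\mu_i,\nup)$, and it is therefore an optimal pair of potentials.

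The main obstacle is the integrability and measurability bookkeeping. Specifically, $\widetilde\varphi_i\in L^1_{\mu_i}$ comes from $\mathcal{A}(c_p)$, and $\widetilde\psi_i$ is continuous with $|\widetilde\psi_i|\le C(1+|\cdot|^p)$. This justifies splitting the integrals and using the cancellation $\int\sum_i\widetilde\psi_i\,\dd\nup=0$.

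As a consistency check, one can also argue directly with $\gamma_i=(\pi_i,\xp)_\sharp\gammap$ from \eqref{eq:optimalitytwomarginals}. On $\spt\gammap$ we have $\sum_i\widetilde\varphi_i(x_i)=c_p(\xvec)=\sum_i\lambda_i|x_i-\xp(\xvec)|^p$, and we also have $\sum_i\widetilde\psi_i(\xp(\xvec))=0$. Together with the termwise inequalities, this forces $\widetilde\varphi_i(x_i)+\widetilde\psi_i(\xp(\xvec))=\lambda_i|x_i-\xp(\xvec)|^p$ $\gammap$-a.e., which is the complementary slackness condition on $\spt\gamma_i$.
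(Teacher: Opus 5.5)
Your proof is correct, but it takes a different route from the paper's. The paper argues pointwise. On $\spt\gamma_p$ it combines $\sum_i\widetilde\varphi_i(x_i)=c_p(\xvec)$ with $\sum_i\widetilde\psi_i(\xp(\xvec))=0$. This forces each constraint $\widetilde\varphi_i(x_i)+\widetilde\psi_i(z)\le\lambda_i|x_i-z|^p$ to be an equality on $\spt\gamma_i$. The paper then concludes by complementary slackness, using that $\gamma_i=(\pi_i,\xp)_\sharp\gamma_p$ is optimal for $\lambda_iW_p^p(\mu_i,\nu_p)$ by \eqref{eq:optimalitytwomarginals}. This is essentially your closing ``consistency check''.

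Your main argument works instead at the level of values: weak duality for each $i$, summation, and the identity $C_{p\text{-MM}}=C_{p\text{-C2M}}$ together with attainment of the multi-marginal dual. This buys you three things:
\begin{itemize}
\item you never need to know which plan is optimal between $\mu_i$ and $\nu_p$;
\item you never need to describe $\spt\gamma_i$ (the paper identifies that support with the image of $\spt\gamma_p$ under $(\pi_i,\xp)$, which tacitly requires passing to closures via a semicontinuity argument);
\item the integrability needed to turn pointwise equalities into equalities of values is handled explicitly rather than left implicit.
\end{itemize}
The paper's route, in turn, gives the pointwise equality on the optimal plan directly. That is what is used later for the gradient relations defining $S_1$ and $R_i$. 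In your approach it follows a posteriori from standard two-marginal duality once the pair is known to be optimal.

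Finally, for integrability you do not even need $\widetilde\psi_i\in\mathcal{Y}_p$ as an extra input. The bound $\widetilde\psi_j(z)\le\lambda_j|x_j-z|^p-\widetilde\varphi_j(x_j)$ at any $x_j$ with $\widetilde\varphi_j(x_j)$ finite, combined with $\widetilde\psi_i=-\sum_{j\neq i}\widetilde\psi_j$, already gives $|\widetilde\psi_i|\le C(1+|\cdot|^p)$.
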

\begin{proof}
The proof follows the same line as in \cite{BFR24-p}, we report it here for better clarity. We recall that by construction, for every $i=1,\dots,N$, $\widetilde\varphi_i$  is the $\lambda, p$-conjugate of  $\psi_i$, and thus
    \begin{equation}\label{eq:ineq}
        \widetilde\varphi_i(x_i)+\widetilde \psi_i(z)\le \lambda_i |x_i-z|^p \quad \text{for every} \ (x_i,z) \in \RR^{2d}
    \end{equation}
    and that $\sum_{i=1}^N\widetilde \psi_i(z)=0$, for every $z\in\RRd$.\\
    If $\gamma_p$ is the optimal plan for \eqref{MMpbary},  $\widetilde\varphi_1,\dots,\widetilde\varphi_N$ are the Kantorovich potentials in \eqref{Dpbary}, if and only if the nonnegative function $c_p-\widetilde\varphi_1\oplus\cdots\oplus\widetilde\varphi_N$ is equal $0$ on $\spt \gammap$. It follows that for every $\xvec \in\spt\gamma_p$,
\begin{align}\label{eq:equality on supp}
\sum_{i=1}^N \widetilde\varphi_i(x_i)+\sum_{i=1}^N\widetilde \psi_i(\xp(\xvec))=\sum_{i=1}^N \widetilde\varphi_i(x_i)=\sum_{i=1}^N\lambda_i|x_i-\xp(\xvec)|^p.
\end{align}
Since the unique optimal plan $\gamma_i\in\Pi(\mu_i,\nu_p)$ for $\lambda_iW_p^p(\mu_i,\nu_p)$ is given by $\gamma_i=(\pi_i,\xp)_\sharp \gamma_p$ (see \eqref{eq:equivalence2} and \eqref{eq:optimalitytwomarginals}), the pair  $(x_i,z)\in\spt\gamma_i$ iff $z=\xp(\xvec)$, where the points $x_1,\dots, x_{i-1},x_{i+1},\dots, x_N$ are such that $\xvec=(x_1,\dots,x_N)\in\spt\gamma_p$. Thus, thanks to \eqref{eq:equality on supp}, inequality \eqref{eq:ineq} is actually an equality on the support of $\gamma_i$.
  \end{proof}
  \begin{remark}\label{rem: differentiability_psi}
From the proof of Theorem \ref{th:duality} in \cite{BFR24-p} one can see that every $\widetilde \psi_i$ is by construction the $\lambda,p$-conjugate of a function and continuous, for $i=1,\dots, N-1$. Then, by Corollary C.5 and Proposition C.6 in \cite{GMC96}, $\widetilde \psi_i$ is locally semiconcave and thus $\widetilde \psi_i$ and $D\widetilde \psi_i$ are $\LL^d$-a.e. differentiable, for $i=1,\dots,N-1$. Differentiability $\LL^d$-a.e. of $\widetilde \psi_N$ and $\nabla\widetilde\psi_N$ follows by the fact that $\widetilde\psi_N(z)=-\sum_{i=1}^{N-1}\widetilde \psi_i(z)$, for every $z$.
  \end{remark}
Theorem \ref{th:absolutecontinuity} below states the absolute continuity of the $p$-Wasserstein barycenter, with $1<p<\infty$, and it is therefore the starting point of our analysis. Notice that it is a slightly improved  version of \cite[Theorem 1.4]{BFR24-p}, where the assumption of all marginals being absolutely continuous has been weakened to only one marginal being absolutely continuous.

Before stating that result, we recall that, given a function $c:\RR^{Nd}\to\RR$, a set $\Gamma\subset \RR^{Nd}$ is said to be $c$-monotone if for every $\xvec^1=(x^1_1,\dots,x^1_N)$, $\xvec^2=(x^2_1,\dots,x^2_N)\in\Gamma$ we have
\begin{equation}\label{eq:c-monotonicity}
c(x^1_1,\dots,x^1_N)+c(x^2_1,\dots,x^2_N)\le c(x^{\sigma_1(1)}_1,\dots,x^{\sigma_N(1)}_N)+c(x^{\sigma_1(2)}_1,\dots,x^{\sigma_N(2)}_N),
\end{equation}
for every $\sigma_i\in S(2)$, with $S(2)$ being the set of permutations of two elements.  
Moreover, the notion of $c$-cyclical monotonicity can also be defined in the multi-marginal setting (see, for instance, Definition 2.2 in \cite{KP14}) and it implies $c$-monotonicity. If $c$ is continuous, then the support of any optimal plan $\gammabar$ for the multi-marginal OT problem associated to $c$,
is $c$-cyclically monotone, see \cite[Proposition~2.3]{KP14}. Therefore, if  $c:\RR^{Nd}\to\RR$ is continuous and $\gammabar$ is optimal for
\begin{equation*}
\min_{\gamma\in\Pi(\mu_1,\dots,\mu_N)}\int_{\RR^{Nd}} c(\xdots)\,\dd\gamma,
\end{equation*}
then $\spt\gammabar$ is $c$-monotone.

\begin{theorem}\label{th:absolutecontinuity}
 Let $1<p<\infty$ and $\gamma_p \in \Pi(\mu_1, \dots, \mu_N)$ have $c_p$-monotone support. Then under the condition that $\mu_1 \ll \LL^d$
there holds\footnote{The choice of $\mu_1$ is arbitrary, one can choose any other marginal $\mu_i$ to be absolutely continuous with respect to Lebesgue measure on $\RR^d$. }
\begin{equation*}
    {\overline{x}_{p}}_{\sharp}\gamma_p\ll\LL^d.
\end{equation*}
It follows that the $p$-Wasserstein barycenter $\nup= \Wbar((\mu_i,\lambda_i)_{i=1, \dots, N})$ of the measures $\mu_1, \dots, \mu_N$ with weights $\lambda_1, \dots, \lambda_N$ is absolutely continuous with respect to Lebesgue measure on $\RR^d$.
\end{theorem}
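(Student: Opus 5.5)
The plan follows the strategy of \cite{BFR24-p} for absolute continuity: find a set $\Gamma\subset\spt\gamma_p$ of full $\gamma_p$-measure on which the $p$-barycenter map $\xp$ is injective, with an inverse that is locally Lipschitz when read in terms of the first coordinate. Suppose $\nup(E)>0$ for some Lebesgue null set $E$. We want to produce a $\mu_1$-positive, $\LL^d$-null set $\pi_1(\xp^{-1}(E)\cap\Gamma)$, which contradicts $\mu_1\ll\LL^d$. So it suffices to show the following: locally on $\spt\gamma_p$, the map $\xp(\xvec)\mapsto x_1$ is Lipschitz. Lipschitz maps send null sets to null sets, which gives the conclusion.

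\textbf{Step 1: the monotonicity inequality.} Take $\xvec,\yvec\in\spt\gamma_p$. By $c_p$-monotonicity (swapping only the first coordinate), together with the first-order optimality condition $\sum_i\lambda_i\nabla|\cdot|^p(x_i-\xp(\xvec))=0$ defining $\xp$, we get
\begin{equation}
\sum_i\lambda_i\big\langle \nabla|\cdot|^p(x_i-\xp(\xvec))-\nabla|\cdot|^p(y_i-\xp(\yvec)),\,(x_i-\xp(\xvec))-(y_i-\xp(\yvec))\big\rangle \lesssim \text{terms controlled by } |\xp(\xvec)-\xp(\yvec)|\,|x_1-y_1| .
\end{equation}
This is the local injectivity estimate of \cite[Lemma 5.2]{BFR24-h} adapted to $|\cdot|^p$. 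By strong monotonicity of $\nabla|\cdot|^p$ on sets where the argument is bounded away from $0$ and $\infty$ (for $p\ge2$), it yields $|x_1-y_1|\le C\,|\xp(\xvec)-\xp(\yvec)|$. The constant $C$ is uniform on pieces where $|x_1-\xp(\xvec)|\ge\delta$ and the points lie in a bounded set.

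\textbf{Step 2: partition.} Cover $\spt\gamma_p$ by countably many pieces: the sets $D_S$ split into those with $1\in S$ and those with $1\notin S$, and the latter are further intersected with $\{|x_1-\xp|\ge 1/k\}\cap B_k$.
\begin{itemize}
\item On pieces with $1\in S$ we have $x_1=\xp(\xvec)$, so the inverse is the identity and is trivially Lipschitz.
\item On the remaining pieces Step 1 gives a Lipschitz inverse, since $\nabla^2|\cdot|^p$ is nondegenerate at $x_1-\xp$ when $p\ge2$.
\end{itemize}
Countable unions preserve null sets, which concludes the case $p\ge 2$ (this recovers \cite[Theorem 1.4]{BFR24-p}).

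\textbf{Step 3: the case $1<p<2$ (main obstacle).} Here $\nabla|\cdot|^p$ is only $(p-1)$-Hölder near $0$. Its monotonicity is therefore \emph{stronger} near $0$, but the other indices $i$ with $x_i$ close to $\xp$ contribute singular terms. The key observation is that only the first marginal needs to be controlled. I would use the strong monotonicity $\langle\nabla|a|^p-\nabla|b|^p,a-b\rangle\ge c\,|a-b|^2(|a|+|b|)^{p-2}$, which for $p<2$ holds with a lower bound of the form $c|a-b|^2$ on bounded sets. This lets us discard the nonnegative contributions of the indices $i\neq1$ and keep only the $i=1$ term. After localizing to bounded sets, we get $|x_1-\xp(\xvec)-(y_1-\xp(\yvec))|^2\lesssim|\xp(\xvec)-\xp(\yvec)|\,|x_1-y_1|$, and hence again $|x_1-y_1|\le C|\xp(\xvec)-\xp(\yvec)|$.

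Making the right-hand side of Step 1 precise is the delicate point. It requires rewriting $c_p$-monotonicity via the variational characterization $c_p=\min_z\sum_i\lambda_i|x_i-z|^p$, testing with $z=\xp(\xvec)$ and $z=\xp(\yvec)$, and checking that only the first-coordinate swap produces cross terms. With that in place, Steps 2–3 apply verbatim, and the last statement follows from \eqref{eq:equivalence2}, since optimal plans have $c_p$-monotone support.
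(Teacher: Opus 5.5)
Your architecture matches the paper's. You split $\spt\gamma_p$ along the sets $D_S$, use $x_1=\xp(\xvec)$ when $1\in S$, show that $\xp(\xvec)\mapsto x_1$ is locally Lipschitz when $1\notin S$, and conclude by a countable cover. The gap is the estimate in Steps 1 and 3, which is not correct as proposed.

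The displayed inequality fails even for $p=2$ and fully $c_p$-monotone sets. Take $d=1$, $N=3$, $\lambda_i=\tfrac13$, $\xvec=(0,0,0)$, $\yvec=(0,1,1)$. The pair is comonotone, hence $c_p$-monotone, because $c_p$ is submodular in one dimension. Since $x_1=y_1$, your right-hand side vanishes, while the left-hand side equals $\tfrac49$; even the $i=1$ term alone is positive. The same pair also violates the Step 3 inequality $|x_1-\xp(\xvec)-(y_1-\xp(\yvec))|^2\lesssim|\xp(\xvec)-\xp(\yvec)|\,|x_1-y_1|$ for every $1<p<2$, since $\xp(\yvec)\neq 0$.

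The recipe you give for making Step 1 precise cannot work either. Bounding $c_p(y_1,x_2,\dots,x_N)$ and $c_p(x_1,y_2,\dots,y_N)$ by testing with $z=\xp(\xvec)$ and $z=\xp(\yvec)$ leaves only the two-marginal $|\cdot|^p$-monotonicity of $\{(x_1,\xp(\xvec)),(y_1,\xp(\yvec))\}$. For $p=2$ this reads $\langle x_1-y_1,\xp(\xvec)-\xp(\yvec)\rangle\ge 0$, which is satisfied by maps with no Lipschitz inverse. The coercive term ($\lambda_1|x_1-y_1|^2$ for $p=2$, $\Lambda_1(\xvec)|y_1-\tilde y_1|^2$ in general) comes precisely from re-optimizing the barycenter after the swap. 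Your test throws this away, and strong monotonicity of $\nabla|\cdot|^p$ cannot replace it.

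The missing idea is how to absorb the error term. The correct first-coordinate estimate \eqref{eq:firstinequality} from \cite[Lemma 5.2]{BFR24-h} holds only in a small ball around a base point $\xvec$. It also carries the error $\eps N(1+|H_1(\xvec)|)|\yvec-\ytildevec|^2$, which involves all coordinates. This cannot be absorbed by $\Lambda_1(\xvec)|y_1-\tilde y_1|^2$, because nearby points of $\Gamma$ may differ much more in $y_j$, $j\neq 1$, than in $y_1$. No graph structure is available to prevent this, since Theorem \ref{th:sparsityplan} is a consequence of the result you are proving. So ``only the first marginal needs to be controlled'' is exactly what fails. Your cut-off $|x_1-\xp|\ge 1/k$ does not help here, and it does not give the uniform constant you claim.

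In Lemma \ref{lem:estimate} the paper instead sums \eqref{eq:firstinequality} over all $i\notin S$ at a base point $\xvec\in D_S$, where every $H_i(\xvec)$ with $i\notin S$ is positive definite and finite. It controls the coordinates $i\in S$ through $y_i=\xp(\yvec)=\xpS(\yvec_{S^\mathsf{c}})$ and the local Lipschitz bound of Lemma \ref{lem:reducedproblem}, which gives $|\yvec-\ytildevec|^2\lesssim\sum_{i\notin S}|y_i-\tilde y_i|^2$. Choosing $\eps$ small then yields $|\xp(\yvec)-\xp(\ytildevec)|\gtrsim\big(\sum_{i\notin S}|y_i-\tilde y_i|^2\big)^{1/2}\ge|y_1-\tilde y_1|$ on $B_{r(\xvec)}(\xvec)\cap D_S\cap\Gamma$. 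The constants are only local, which suffices by Corollary \ref{cor:DScase}.

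The paper treats $1<p<2$ with the same device. The indices where $\nabla^2|\cdot|^p$ is singular are exactly those in $S$, and they enter only through $\xpS$, not through the summed second-order inequality. You should not try to discard them as nonnegative contributions.
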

\begin{proof}
For $p>2$ see \cite{BFR24-h}, for the extension to general $1<p<\infty$, see Appendix \ref{appendix}.
\end{proof}
As shown in \cite{BFR24-p}, this regularity property has as a direct consequence a strong sparsity result for the the optimal plan. More precisely, $\gammap$ can be considered as a map defined on the support of one of the marginals (cf. \cite[Theorem 1.2]{BFR24-p}). In Theorem ~\ref{th:sparsityplan}, the Monge property of the optimal plan holds under the weaker condition $\mu_1\ll \LL^d$, for every $1<p<\infty$. However, taking into account Theorem ~\ref{th:absolutecontinuity}, the proof is the same as for \cite[Theorem 1.2]{BFR24-p}, and we do not report it here. Notice that the choice of $\mu_1$ is arbitrary: if $\mu_i\ll\LL^d$ the result is equivalent, with $\spt\gammap$ parametrized with respect to the $i$th marginal. 
\begin{theorem}\label{th:sparsityplan}
For any $1<p<\infty$, if  $\mu_1\ll \LL^d$,
then there exists a unique optimal plan $\gammap$ for the problem \eqref{MMpbary}, and there exist measurable maps $T_i:\spt\mu_1\to \spt\mu_i$, $i=1, \dots, N$, such that
\begin{equation}\label{eq:monge-degenerate}
    \gammap=(T_1,T_2,\dots,T_N)_\sharp\mu_1,
\end{equation}
where 
\begin{align}
T_i=R_i \circ S_1,
\end{align}
with $S_1:\spt\mu_1\to\spt\nup$ and $R_i:\spt\nup\to\spt\mu_i$ the optimal maps for the $2$-marginals problem $\lambda_iW_p^p(\mu_i,\nu_p)$, for every $i=1,\dots,N$.
In particular, 
\begin{align}
  S_1=R_1^{-1} \quad \text{and} \quad T_1=\id .
\end{align}
Moreover, there exist a.e.\ differentiable functions $\widetilde\varphi_1: \spt\mu_1 \to\RR$ and $\widetilde \psi_i: \spt\nup \to\RR$ such that
\begin{align}\label{eq:mapS}
   S_1 = \id - (p\lambda_1)^{-\frac{1}{p-1}} |\nabla\widetilde\varphi_1|^{-\alpha_p} \nabla\widetilde\varphi_1 \quad \text{and} \quad R_i = \id - (p\lambda_i)^{-\frac{1}{p-1}} |\nabla\widetilde \psi_i|^{-\alpha_p} \nabla\widetilde \psi_i, 
\end{align} for any $i=1,\dots,N$, where $\alpha_p=\frac{p-2}{p-1}$. 
\end{theorem}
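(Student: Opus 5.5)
The plan is to follow the proof of \cite[Theorem 1.2]{BFR24-p}, using Theorem~\ref{th:absolutecontinuity} to replace the assumption that all marginals are absolutely continuous. First, existence of an optimal $\gammap$ for \eqref{MMpbary} follows from the direct method. Fix any optimizer $\gammap$ and set $\nup = {\xp}_\sharp\gammap$. By \eqref{eq:equivalence2}, $\nup$ is a barycenter. Since $c_p$ is continuous, $\spt\gammap$ is $c_p$-monotone, so Theorem~\ref{th:absolutecontinuity} gives $\nup\ll\LL^d$.

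Next I use the two-marginal projections $\gamma_i=(\pi_i,\xp)_\sharp\gammap$, which are optimal for $\lambda_iW_p^p(\mu_i,\nup)$ by \eqref{eq:optimalitytwomarginals}. Take the potentials $(\widetilde\varphi_i,\widetilde\psi_i)$ from Theorem~\ref{th:duality}; by Corollary~\ref{cor: coupled two marg pot} they are optimal for these problems. Since $\nup\ll\LL^d$ and $\widetilde\psi_i$ is differentiable $\LL^d$-a.e. (Remark~\ref{rem: differentiability_psi}), the standard first-order argument applies. Fix $z$ in the support of $\gamma_i$'s second marginal where $\widetilde\psi_i$ is differentiable, and let $(x_i,z)\in\spt\gamma_i$. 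The function $w\mapsto \lambda_i|x_i-w|^p-\widetilde\psi_i(w)$ is minimized at $w=z$, so
\[
\nabla\widetilde\psi_i(z)=p\lambda_i|z-x_i|^{p-2}(z-x_i).
\]
Inverting the map $v\mapsto p\lambda_i|v|^{p-2}v$ gives $x_i=R_i(z)$ with $R_i$ as in \eqref{eq:mapS}, so $\gamma_i=(R_i,\id)_\sharp\nup$. In the same way, $\mu_1\ll\LL^d$ and $\widetilde\varphi_1$ is differentiable a.e. (Theorem~\ref{th:duality}), so $\gamma_1=(\id,S_1)_\sharp\mu_1$ with $S_1$ given by \eqref{eq:mapS}. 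Both $\gamma_1$ and its transpose are induced by maps, which forces $R_1\circ S_1=\id$ $\mu_1$-a.e. and $S_1\circ R_1=\id$ $\nup$-a.e., that is, $S_1=R_1^{-1}$.

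The key step is to show that $\gammap$ is concentrated on the set
\[
\{(x_1,\dots,x_N): x_i=R_i(\xp(\xvec))\ \text{for every } i\}.
\]
This holds because $(x_i,\xp(\xvec))\in\spt\gamma_i$ for $\gammap$-a.e.\ $\xvec$, and $\gamma_i$ is concentrated on the graph of $R_i$, restricted to points where $\widetilde\psi_i$ is differentiable. That set has full $\nup$-measure, so its preimage under $\xp$ has full $\gammap$-measure. Hence $\xvec=(R_1,\dots,R_N)(\xp(\xvec))$ and $\gammap=(R_1,\dots,R_N)_\sharp\nup$. Substituting $\nup={S_1}_\sharp\mu_1$ gives $\gammap=(R_1\circ S_1,\dots,R_N\circ S_1)_\sharp\mu_1$, so $T_i=R_i\circ S_1$ and $T_1=\id$.

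Uniqueness is handled by the usual convexity trick. If $\gammap^1$ and $\gammap^2$ are both optimal, then so is $\frac12(\gammap^1+\gammap^2)$, and by the above it must be of Monge form $(\id,T_2,\dots,T_N)_\sharp\mu_1$. A mixture of two measures is concentrated on a graph over $\mu_1$ only if both components are concentrated on the same graph, so $\gammap^1=\gammap^2$.

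The main obstacle is the first step: the a.e.\ differentiability of $\widetilde\psi_i$ on the relevant region and the need for $\nup\ll\LL^d$. Without the absolute continuity of $\nup$, the maps $R_i$ would not be defined $\nup$-a.e. This is exactly where Theorem~\ref{th:absolutecontinuity} for general $1<p<\infty$, proved in the appendix, is indispensable. The remaining steps are routine measure-theoretic bookkeeping.
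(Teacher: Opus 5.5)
Your proposal is correct and follows the paper's approach. The paper does not write out a proof: it says the argument is that of \cite[Theorem 1.2]{BFR24-p}, with Theorem~\ref{th:absolutecontinuity} supplying $\nup\ll\LL^d$ from $\mu_1\ll\LL^d$ alone. That is exactly the substitution you make before running the standard projection, first-order-condition and mixture-uniqueness arguments.
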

\begin{remark}
Thanks to Theorem~\ref{th:sparsityplan}, one defines the map $\barp:\spt\mu_1\to\spt\nup$ as 
    \begin{align}\label{eq:barp}
        \barp(x_1):=\xp\circ({\rm Id},T_2,\dots,T_N)(x_1), \qquad \text{for} x_1\in\spt\mu_1.
    \end{align}
We notice thus 
\begin{align}\label{eq:barp_sharp_nu_1}
    \nup={\barp}_\sharp \mu_1.
\end{align}
\end{remark}
\begin{remark}\label{eq:optimality2marginal}
The fact that $S_1$ and $R_i$ are optimal respectively for the $2$-marginals problems $\lambda_iW_p^p(\mu_1,\nup)$ and $\lambda_iW_p^p(\nu_p,\mu_i)$ is a direct consequence of \eqref{eq:optimalitytwomarginals}. We remark that the existence of $R_i$ (see the classical result by Gangbo and McCann \cite{GMC96}) is guaranteed by the fact that $\nup\ll\LL^d$ (Theorem \ref{th:absolutecontinuity}). Notice that by dropping the assumption of absolute continuity on the marginals $\mu_i$, with $i\ge 2$, the existence of the optimal map $S_i:=R_i^{-1}:\spt\mu_i\to\spt\nup$ for the problem $\lambda_iW_p^p(\mu_i,\nu_p)$  is not guaranteed.
Finally, the functions $\widetilde\varphi_1$ and $\widetilde\psi_i$ of Theorem~\ref{th:sparsityplan} are the same of the ones obtained in Theorem \ref{th:duality}. By Corollary \ref{cor: coupled two marg pot} we know that each $\widetilde\varphi_i,\widetilde \psi_i$ are Kantorovich potentials for the $2$-marginals problem $\lambda_iW^p_p(\mu_i,\nu_p)$ and the representation of $S_1$ and $R_i$ with these potentials is consistent with the standard $2$-marginal duality arguments (see for instance \cite[Remark 5.3]{Ambrosio-Brue-Semola:2024} or \cite[Section 3.6]{Fri24}).\footnote{Indeed, by duality theory one knows that the optimal transport map $T$ for a 2-marginal OT problem is obtained, when possible, by inverting w.r.t. of $y$ the equality $\nabla_xc(x,y)=\nabla\varphi(x)$, which holds on the support of the optimal plan $\gamma$. In our case the $2$-marginal cost is of the type $c(x,y)=\lambda |x-y|^p$.}
\end{remark}

\subsection{Characterization of $p$-Wasserstein barycenters}
Proposition \ref{lem:characterization} and Proposition \ref{prop:characterisation_onedirection} below are natural extensions of the characterization of the $W_2$-barycenter discussed respectively in \cite[Proposition 3.8, Remark 3.9]{AC11}. In particular, \eqref{eq:charact_optimality} is the equivalent of (3.10) in \cite{AC11}. 
\begin{proposition}\label{lem:characterization}
    Assume that $\mu_1\ll \LL^d$  and that $\bar\nu\in\Prob_p(\RR^d)$. Then the following conditions are equivalent:
    \begin{enumerate}
        \item \label{it:lem_nubar} 
        $\bar\nu=\Wbar((\mu_i,\lambda_i)_{i=1, \dots, N})$.
        \item \label{it:lem_exPot} there exist $((\varphi_i,\psi_i))_{i=1,\dots,N}$ such that $(\varphi_i,\psi_i)$ are Kantorovich potentials for $W^p_p(\mu_i,\bar\nu)$ for every $i=1,\dots, N$ and $\sum_{i=1}^N\lambda_i\psi_i(z)=0$, for every $z$.
    \end{enumerate}
\end{proposition}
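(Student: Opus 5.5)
The plan is to prove the two implications separately. The direction \eqref{it:lem_exPot}$\Rightarrow$\eqref{it:lem_nubar} is a short duality argument. The direction \eqref{it:lem_nubar}$\Rightarrow$\eqref{it:lem_exPot} follows from Theorem~\ref{th:duality} and Corollary~\ref{cor: coupled two marg pot} after a rescaling by $\lambda_i$. Throughout, ``Kantorovich potentials for $W_p^p(\mu_i,\bar\nu)$'' means a pair with $\varphi_i(x)+\psi_i(z)\le |x-z|^p$ for all $x,z$ and $\int\varphi_i\,\dd\mu_i+\int\psi_i\,\dd\bar\nu=W_p^p(\mu_i,\bar\nu)$.

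\emph{\eqref{it:lem_nubar}$\Rightarrow$\eqref{it:lem_exPot}.} First I will note that the barycenter is unique under $\mu_1\ll\LL^d$. By \eqref{eq:equivalence2}, every barycenter has the form ${\xp}_\sharp\gammap$ for some optimal $\gammap$ of \eqref{MMpbary}. By Theorem~\ref{th:sparsityplan} this optimal plan is unique, so $\bar\nu=\nup={\xp}_\sharp\gammap$. Next, take the maximizer $(\widetilde\varphi_i)$ from Theorem~\ref{th:duality} together with $\widetilde\psi_i$ satisfying $\widetilde\varphi_i=\widetilde\psi_i^{\lambda_i,p}$ and $\sum_i\widetilde\psi_i=0$. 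Corollary~\ref{cor: coupled two marg pot} says that $(\widetilde\varphi_i,\widetilde\psi_i)$ are optimal potentials for $\lambda_iW_p^p(\mu_i,\nup)$. Setting $\varphi_i:=\widetilde\varphi_i/\lambda_i$ and $\psi_i:=\widetilde\psi_i/\lambda_i$ then gives Kantorovich potentials for $W_p^p(\mu_i,\bar\nu)$, and $\sum_i\lambda_i\psi_i=\sum_i\widetilde\psi_i=0$.

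\emph{\eqref{it:lem_exPot}$\Rightarrow$\eqref{it:lem_nubar}.} Fix an arbitrary $\nu\in\Prob_p(\RR^d)$ and let $\eta_i\in\Pi(\mu_i,\nu)$ be optimal. Integrating the constraint $\varphi_i(x)+\psi_i(z)\le|x-z|^p$ against $\eta_i$ gives
\begin{align}
\lambda_iW_p^p(\mu_i,\nu)\ \ge\ \lambda_i\int\varphi_i\,\dd\mu_i+\lambda_i\int\psi_i\,\dd\nu .
\end{align}
Summing over $i$, the $\nu$-terms cancel because $\sum_i\lambda_i\psi_i\equiv0$. Hence
\begin{align}
\sum_i\lambda_iW_p^p(\mu_i,\nu)\ \ge\ \sum_i\lambda_i\int\varphi_i\,\dd\mu_i .
\end{align}
For $\nu=\bar\nu$ the same computation holds with equality, by optimality of the potentials and again using $\sum_i\lambda_i\psi_i=0$. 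Therefore $\bar\nu$ minimizes \eqref{eq:pW-barycenter}, which is \eqref{it:lem_nubar}.

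The only delicate point, which I expect to be the main obstacle, is justifying that $\psi_i\in L^1(\nu)$ for an \emph{arbitrary} $\nu\in\Prob_p$, so that the splitting of the integral and the cancellation are legitimate. I would handle this through growth bounds. From the constraint, with a fixed $x_0$ where $\varphi_i(x_0)$ is finite, we get $\psi_i(z)\le |x_0-z|^p-\varphi_i(x_0)\le C(1+|z|^p)$. This upper bound holds for every $i$. Combining it with $\lambda_j\psi_j=-\sum_{i\ne j}\lambda_i\psi_i$ gives the matching lower bound $\psi_j(z)\ge -C'(1+|z|^p)$. So each $\psi_j$ has at most $p$-growth in absolute value and is integrable against every $\nu\in\Prob_p(\RR^d)$.

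Finally, $\varphi_i\in L^1(\mu_i)$ is part of the definition of Kantorovich potentials. If needed, one may first replace $\psi_i$ by its $c$-concave envelope, but the sum constraint must then be preserved, so I would rather use the growth argument above directly.
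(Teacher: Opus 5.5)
Your proof is correct. For (1)$\Rightarrow$(2) you do what the paper does: rescale the potentials of Theorem~\ref{th:duality} and Corollary~\ref{cor: coupled two marg pot} by $\lambda_i$. Your extra remark that $\bar\nu=\nup$ by uniqueness of $\gammap$ is harmless.

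For (2)$\Rightarrow$(1) your route genuinely differs. You compare $\bar\nu$ directly with an arbitrary competitor $\nu$ in the coupled two-marginal problem. You integrate each constraint $\varphi_i(x)+\psi_i(z)\le|x-z|^p$ against an optimal plan in $\Pi(\mu_i,\nu)$ and let the $\psi$-terms cancel.

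The paper never integrates against a competitor. It sums the pointwise constraints with weights $\lambda_i$, uses $\sum_i\lambda_i\psi_i=0$, and evaluates at $z=\xp(\xvec)$. This gives $\sum_i\lambda_i\varphi_i(x_i)\le c_p(\xvec)$, so $(\lambda_i\varphi_i)_i$ is admissible for the multi-marginal dual. Weak duality then yields $C_{p\text{-MM}}\ge\sum_i\lambda_i\int\varphi_i\,\dd\mu_i=\sum_i\lambda_iW_p^p(\mu_i,\bar\nu)$, and the identity $C_{p\text{-MM}}=C_{p\text{-C2M}}$ finishes.

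\emph{What each route buys.} The paper's argument only needs $\varphi_i\in L^1(\mu_i)$ and $\psi_i\in L^1(\bar\nu)$, both built into the notion of Kantorovich potentials. In exchange it relies on the multi-marginal reformulation and its equivalence with the barycenter problem. Yours stays entirely in the two-marginal setting, as the direct analogue of the Agueh--Carlier argument. The price is that you need $\psi_i\in L^1(\nu)$ for every $\nu\in\Prob_p(\RR^d)$.

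You obtain that integrability correctly. The constraint at a point $x_0$ with $\varphi_i(x_0)$ finite gives a $p$-growth upper bound on $\psi_i$. The identity $\sum_j\lambda_j\psi_j\equiv0$ then forces the matching lower bound. This uses that the constraint holds for all $x,z$ and that the $\psi_i$ are real-valued and Borel, as in your definition.
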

\begin{proof}
The fact that \eqref{it:lem_nubar} implies \eqref{it:lem_exPot} follows directly by Theorem \ref{th:duality} and Corolloray \ref{cor: coupled two marg pot} Indeed, for each $i$, take $(\frac{\widetilde\varphi_i}{\lambda_i},\frac{\widetilde\psi_i}{\lambda_i})$, where $(\widetilde\varphi_i,\widetilde\psi_i)$ are the ones given by Theorem \ref{th:duality}.\\
   We now prove that \eqref{it:lem_exPot} implies \eqref{it:lem_nubar}.
   The admissibility property of the potentials implies that $\varphi_i(x_i)+\psi_i(z)\le |x_i-z|^p$, for every $x_i$, $z$, and thus
   \begin{align}
   \sum_{i=1}^N \lambda_i \varphi_i(x_i)=\sum_{i=1}^N \lambda_i (\varphi_i(x_i)+\psi_i(z) ) \le \sum_{i=1}^N \lambda_i |x_i-z|^p.  
   \end{align}
   By choosing $z=\xp(x_1,\dots,x_N)$ this yelds
     \begin{align}
  \sum_{i=1}^N \lambda_i \varphi_i(x_i) \le \sum_{i=1}^N \lambda_i |x_i-\xp(x_1,\dots,x_N)|^p,  
   \end{align}
   meaning that $\varphi_1,\dots,\varphi_N$ are admissible potentials for the $p$-Wasserstein barycenter multi-marginal cost $c_p(x_1,\dots,x_N)=\sum_{i=1}^N \lambda_i |x_i-\xp(x_1,\dots,x_N)|^p$ (see \eqref{eq:p-cost}). 
   Therefore 
   \begin{align}
\min_{\gamma\in\Pi(\mu_1,\dots,\mu_N)}\int_{\RR^{Nd}} c_p(x_1,\dots,x_N)\dd\gamma(x_1,\dots,x_N)
&\ge  
\sum_{i=1}^N\int\varphi_i(x_i)\dd\mu_i(x_i)
\\
    &=
\sum_{i=1}^N\int \lambda_i \varphi_i(x_i)\dd\mu_i(x_i)+\sum_{i=1}^N\int \lambda_i\psi_i(z) \dd\bar\nu(z)\\ &=\sum_{i=1}^N\lambda_i W_p^p(\mu_i,\bar\nu),
   \end{align}
   where the last equality comes from \eqref{it:lem_exPot}.
   \end{proof}
\begin{proposition}
\label{prop:characterisation_onedirection}
Let $\mu_1\ll \LL^d$. If $\nu_p=\Wbar((\mu_i,\lambda_i)_{i=1, \dots, N})$, then 
    \begin{align}
    \label{eq:charact_optimality}
        \xp \circ \big( R_1, \dots , R_N \big) = \emph{id}
            \qquad 
        \nup - \text{a.e.}
            \, ,
    \end{align}
    where $R_1,\dots,R_N$ are the maps given by Theorem \ref{th:sparsityplan}.
\end{proposition}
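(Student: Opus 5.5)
The plan is to read the identity off the Monge structure of $\gammap$ given by Theorem~\ref{th:sparsityplan}, and to use the potentials only as a consistency check. We know $\gammap=(\id,T_2,\dots,T_N)_\sharp\mu_1$ with $T_i=R_i\circ S_1$ and $T_1=\id=R_1\circ S_1$. Moreover, by \eqref{eq:optimalitytwomarginals}, $\gamma_1=(\pi_1,\xp)_\sharp\gammap$ is the optimal plan for $\lambda_1W_p^p(\mu_1,\nup)$. Since $\mu_1\ll\LL^d$, this plan is unique and induced by the map $S_1$ of Gangbo--McCann, so $\gamma_1=(\id,S_1)_\sharp\mu_1$.

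On the other hand, $\gamma_1=(\id,\barp)_\sharp\mu_1$ with $\barp=\xp\circ(\id,T_2,\dots,T_N)$. The first step is therefore to conclude
\begin{align}
\xp\big(x_1,T_2(x_1),\dots,T_N(x_1)\big)=S_1(x_1)\qquad \mu_1\text{-a.e.},
\end{align}
since two graph-type plans with the same first marginal coincide only if the maps agree $\mu_1$-a.e.

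The second step substitutes $T_i=R_i\circ S_1$ and $x_1=R_1(S_1(x_1))$. This gives $\xp\circ(R_1,\dots,R_N)\circ S_1=S_1$ $\mu_1$-a.e. Set $z=S_1(x_1)$. The set where the identity $\xp(R_1(z),\dots,R_N(z))=z$ fails has $\nup$-measure equal to the $\mu_1$-measure of its preimage under $S_1$, because $(S_1)_\sharp\mu_1=\nup$. That preimage is contained in the null set above, so \eqref{eq:charact_optimality} holds $\nup$-a.e.

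The main obstacle is the a.e.\ bookkeeping. The relation $R_1\circ S_1=\id$ holds only $\mu_1$-a.e., and $S_1\circ R_1=\id$ holds only $\nup$-a.e. (this uses $\nup\ll\LL^d$, from Theorem~\ref{th:absolutecontinuity}). We must also make sure that the $R_i$ in Theorem~\ref{th:sparsityplan} are defined on a common full-measure subset of $\spt\nup$. I will intersect the finitely many full-measure sets and pass back and forth using the push-forward relations.

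An alternative route, closer to \cite{AC11}, goes through the potentials. By Corollary~\ref{cor: coupled two marg pot} and $\sum_i\widetilde\psi_i=0$, differentiating at $\nup$-a.e. $z$ (Remark~\ref{rem: differentiability_psi}) gives $\sum_i\nabla\widetilde\psi_i(z)=0$. By \eqref{eq:mapS} we have $\nabla\widetilde\psi_i(z)=p\lambda_i|R_i(z)-z|^{p-2}(z-R_i(z))$ at such points. Hence $z$ satisfies the first-order condition $\sum_i\lambda_i|z-R_i(z)|^{p-2}(z-R_i(z))=0$, which characterizes the unique minimizer $\xp(R_1(z),\dots,R_N(z))$ by strict convexity. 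I would present the first argument and mention the second as the analogue of (3.10) in \cite{AC11}.
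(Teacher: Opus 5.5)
Your main argument is correct, and it takes a genuinely different route from the paper. The ``alternative route'' you sketch at the end is in fact exactly the paper's proof.

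The paper works entirely with the dual potentials. It writes $R_i(z)=z-(p\lambda_i)^{-1/(p-1)}|\nabla\widetilde\psi_i|^{-\alpha_p}\nabla\widetilde\psi_i(z)$ and observes that $\xp\circ(R_1,\dots,R_N)(z)=z$ is equivalent to the Euler--Lagrange equation $\sum_i\lambda_i|R_i(z)-z|^{p-2}(R_i(z)-z)=0$. This in turn is equivalent to $\sum_i\nabla\widetilde\psi_i(z)=0$, and the paper concludes from $\sum_i\widetilde\psi_i\equiv0$. Only afterwards does it derive $\barp=S_1$ as Corollary~\ref{cor:barp_gleich_S1}.

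You reverse this logical order. You obtain $\barp=S_1$ $\mu_1$-a.e.\ directly from two facts: the uniqueness of the optimal plan for $W_p(\mu_1,\nup)$, available since $\mu_1\ll\LL^d$, and \eqref{eq:optimalitytwomarginals}. You then transfer the $\mu_1$-a.e.\ identity $\xp\circ(R_1,\dots,R_N)\circ S_1=S_1$ to a $\nup$-a.e.\ identity via $(S_1)_\sharp\mu_1=\nup$. This bookkeeping is sound. The failure set $F$ is Borel, and $S_1^{-1}(F)$ is exactly the $\mu_1$-null failure set. So you need only $R_1\circ S_1=\id$ $\mu_1$-a.e.\ and $T_i=R_i\circ S_1$, not $S_1\circ R_1=\id$.

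Your route is more elementary. It uses no a.e.\ differentiability of the potentials (Remark~\ref{rem: differentiability_psi}) and no gradient representation of the $R_i$. It shows the identity is a purely structural consequence of the Monge form of $\gammap$ together with uniqueness in the two-marginal problems.

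The paper's route yields more than the statement. It gives the pointwise equivalence $\xp\circ(R_1,\dots,R_N)(z)=z\Longleftrightarrow\sum_i\nabla\widetilde\psi_i(z)=0$, namely \eqref{eq:equivalence_identity_sumzeropotentials}, at every point where the gradient formula holds. This is the link to the Agueh--Carlier characterization, and it is reused in Section~\ref{sec:examples} to pass from \eqref{eq:xp_identiy_global} to \eqref{eq:claim_affine_bary}.
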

\begin{proof}
By Theorem \ref{th:sparsityplan}, we know that, for every $i = 1, \dots, N$, 
\begin{align}\label{eq: Si}
R_i(z) = z - (p\lambda_i)^{-\frac{1}{p-1}} |\nabla\widetilde\psi_i|^{-\alpha_p} \nabla\widetilde\psi_i(z), \ \text{for} \ \nup\text{-a.e.}\  z,
\end{align}
where the $\widetilde\psi_i$'s are the one given by Theorem \ref{th:duality}.
Let $z\in\spt\nup$ such that \eqref{eq: Si} holds. Then $\xp \circ \big( R_1, \dots , R_N \big)(z) = z$ if and only if $z$ is the (unique) solution of 
\begin{align}
\argmin_{y\in\RR^d}\sum_{i=1}^N\lambda_i|R_i(z)-y|^p
    ,
\end{align}
which in turn is equivalent to 
\begin{align}
   \sum_{i=1}^N\lambda_i|R_i(z)-z|^{p-2}(R_i(z)-z)=0.
\end{align}
By \eqref{eq: Si}, the above equation is equivalent to 
\begin{align}\label{eq:equivalence_identity_sumzeropotentials}
   \sum_{i=1}^N\lambda_i\left|(p\lambda_i)^{-\frac{1}{p-1}} |\nabla\widetilde\psi_i|^{-\alpha_p}\nabla\widetilde\psi_i(z)\right|^{p-2} (p\lambda_i)^{-\frac{1}{p-1}} |\nabla\widetilde\psi_i|^{-\alpha_p} \nabla\widetilde\psi_i(z)=0.
\end{align}
All in all, recalling that $\alpha_p=\frac{p-2}{p-1}$, we conclude that,  for $z\in\spt\nup$ such that \eqref{eq: Si} holds,
\begin{align}
    \xp \circ \big( R_1, \dots , R_N \big)(z) = z
        \quad \Longleftrightarrow \quad 
    \sum_{i=1}^N \nabla\widetilde\psi_i(z)=0
         \quad \Longleftrightarrow \quad 
    \nabla\Big( \sum_{i=1}^N \widetilde\psi_i(z) \Big) =0.
\end{align}
Finally, the latter equation is satisfied since $\sum_{i=1}^N\widetilde\psi_i(z)=0$ for every $z$ (see Theorem \ref{th:duality}).
\end{proof}

The following corollary follows directly from Proposition \ref{prop:characterisation_onedirection}. 
\begin{corollary}\label{cor:barp_gleich_S1}
Let $\mu_1\ll\LL^d$. Then
\begin{align}
    \barp=S_1 \quad \mu_1\text{-a.e.}.
\end{align}
In particular, $\barp$ is injective.
\end{corollary}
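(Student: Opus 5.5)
The argument combines the Monge structure of Theorem~\ref{th:sparsityplan} with the fixed-point identity \eqref{eq:charact_optimality}.

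First, I use that $\gamma_p=(\id,T_2,\dots,T_N)_\sharp\mu_1$ with $T_i=R_i\circ S_1$. Then for $\mu_1$-a.e.\ $x_1$,
\begin{align}
\barp(x_1)=\xp\bigl(x_1,T_2(x_1),\dots,T_N(x_1)\bigr)=\xp\bigl(R_1(S_1(x_1)),R_2(S_1(x_1)),\dots,R_N(S_1(x_1))\bigr).
\end{align}
This uses $T_1=\id=R_1\circ S_1$ $\mu_1$-a.e., which holds because $S_1=R_1^{-1}$.

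Second, Proposition~\ref{prop:characterisation_onedirection} says that $\xp\circ(R_1,\dots,R_N)=\id$ outside a $\nup$-null set $E\subset\RR^d$. Since $(S_1)_\sharp\mu_1=\nup$, the preimage $S_1^{-1}(E)$ is $\mu_1$-null. Hence for $\mu_1$-a.e.\ $x_1$ we may apply the identity at $z=S_1(x_1)$, and obtain $\barp(x_1)=S_1(x_1)$.

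The one point requiring care is measure-theoretic. The maps $R_i$ are defined only $\nup$-a.e., and the relation $T_i=R_i\circ S_1$ holds only $\mu_1$-a.e. So I must check that the union of all exceptional sets, pulled back through $S_1$, is still $\mu_1$-negligible. This follows from the push-forward relation together with countable subadditivity, since only finitely many null sets are involved.

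For injectivity, both $\mu_1$ and $\nup$ are absolutely continuous; for $\nup$ this is Theorem~\ref{th:absolutecontinuity}. By the Gangbo--McCann theory, the optimal map $S_1$ for $\lambda_1W_p^p(\mu_1,\nup)$ is then invertible $\mu_1$-a.e., with inverse $R_1$, so that $R_1\circ S_1=\id$ $\mu_1$-a.e. Consequently $\barp$, which agrees with $S_1$ $\mu_1$-a.e., is injective on a set of full $\mu_1$-measure. Injectivity is meant in this a.e.\ sense, after restricting to a suitable full-measure subset of $\spt\mu_1$.
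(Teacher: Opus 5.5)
Your proposal is correct and is essentially the argument the paper has in mind. The paper gives no separate proof and only says the corollary follows directly from Proposition~\ref{prop:characterisation_onedirection}. That is: write $\barp=\xp\circ(R_1,\dots,R_N)\circ S_1$, pull back the $\nup$-null exceptional set via $(S_1)_\sharp\mu_1=\nup$, and inherit injectivity from $S_1=R_1^{-1}$. Your tracking of the exceptional sets, and your reading of injectivity as holding on a full-$\mu_1$-measure subset of $\spt\mu_1$, are sensible refinements of what the paper leaves implicit.
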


\subsection{Euclidean Barycenters: properties and regularity}
\label{sec:preliminaries_pbary}
The $p$-barycenter in $\RR^d$ is the map $\xp: \RR^{Nd} \to \RR^d$, defined by 
\begin{align}
    \xp(\xvec)
        = 
    \argmin_{z\in\RR^d} \sum_{i=1}^N \lambda_i |x_i-z|^p
        \,  ,\qquad 
    \xvec = (x_1, \dots, x_N) \in \RR^N 
        \, . 
\end{align}
In this section we discuss some properties of this map which will be used throughout the paper.  Recall that $\alpha_p = \frac{p-2}{p-1} = 1 - \frac1{p-1}$. In particular, $\alpha_p \in (0,1)$ for $p>2$, $\alpha_2 =0$, and $\alpha_p <0$ for $p \in (1,2)$.
\begin{remark}[Properties of $\xp$]\label{rem:first_properties_xp}
Notice that

\renewcommand{\labelenumi}{\arabic*.}
\renewcommand{\labelenumii}{\arabic{enumi}.\arabic{enumii}.}
\renewcommand{\labelenumiii}{\arabic{enumi}.\arabic{enumii}.\arabic{enumiii}}
\begin{enumerate}
\item $\xp(\xvec)$ is in the convex hull of the points $\xdots$.
\item \label{item:first_properties_xp_1} the map $\xp$ is locally Lipschitz and therefore differentiable $\LL^d$-a.e.. Indeed, it is locally the minimum, over a compact set $K$, of a 
family the locally Lipschitz functions $\{f_z\}_{z\in K}$ \footnote{If $B_1,\dots, B_N\subset\RRd$ are open balls, then for any $\yvec\in B_1\times\cdots\times B_N$, \[\xp(\yvec)=\min_{z\in K} f_z(\yvec),\]
where $K$ is the closure of the convex hull of $\bigcup_{i=1}^N B_i$. Indeed, $K$ contains the union of the convex hulls of $\{\ydots\}$ with $\yvec\in B_1\times\cdots\times B_N$, where the barycenter lies.} \[ f_z(\xvec)\coloneq \sum_{i=1}^N\lambda_i|x_i-z|^p \, , \qquad \xvec \in \RR^{Nd} \, .\]

\item Let $\xvec=(\xdots)\in \RRd$, then $\xp(\xvec)$ is the only solution of its Euler--Lagrange equation
    \begin{align}\label{eq:euler-lagrange}
       \sum_{i=1}^N\lambda_i \nabla_z(|x_i-z|^p) \sum_{i=1}^N\lambda_i|x_i-z|^{p-2}(x_i-z)=0.
    \end{align}
In general, the solution of \eqref{eq:euler-lagrange}  cannot be written explicitly as a function of $\xdots$, however 
\begin{enumerate}
    \item if $p=2$, 
    \begin{align}\label{eq:2-barycenter}
        \xp(\xvec)=\sum_{i=1}^N\lambda_i x_i,
    \end{align}
    \item if $N=2$,
    \begin{align}\label{eq:euclidian_geodesics}
        \xp(\xvec)= \frac{\lambda_1^{\frac{1}{p-1}}}{(1-\lambda_1)^{\frac{1}{p-1}}+\lambda_1^{\frac{1}{p-1}}}x_1 +  \frac{(1-\lambda_1)^{\frac{1}{p-1}}}{(1-\lambda_1)^{\frac{1}{p-1}}+\lambda_1^{\frac{1}{p-1}}}x_2.
    \end{align}
\end{enumerate}
\end{enumerate}
\end{remark}
Let us set
$
    {\rm diag}(\RR^{dk}):=\{ (x_{i_1},\dots,x_{i_k}) \in \RR^{dk} \, : \,  x_{i_j} = x_{i_l} \text{ for all } j,l=1, \dots, k \}.
$
\begin{proposition}[Regularity of $\xp$]\label{prop:diff_xp}
  The barycenter map $\xp: \RR^{Nd} \to \RR^d$ is continuously differentiable over $\RR^{dN} \setminus {\mathrm{diag}(\RR^{dN})}$, and over such set we have
  \begin{align}
  \label{eq:xp_diff}
    \nabla_{x_i}\xp(\cdot)
        =
    \overline{H}(\cdot)^{-1}H_j(\cdot),
  \end{align}   
where for every $\xvec \in \RR^{dN} \setminus {\mathrm{diag}(\RR^{dN})}$, we define 
\begin{align}\label{eq:full_form_Hi_Hbar}
    H_i(\xvec) 
        &:= \lambda_i\nabla_{x_i}^2(|x_i-z|^p)\big|_{z=\xp}
    = \lambda_i |x_i-z|^{p-2}\left((p-2) \frac{x_i-z}{|x_i-z|}\otimes \frac{x_i-z}{|x_i-z|}+ \1 \right)\bigg|_{z=\xp(\xvec)},
\\
    \overline{H}(\xvec) 
        &:=\left(\sum_{k=1}^N H_k(\xvec)\right)=
        \left(\sum_{k=1}^N\lambda_k |x_k-z|^{p-2}\left((p-2) \frac{x_k-z}{|x_k-z|}\otimes \frac{x_k-z}{|x_k-z|}+ \1 \right)\right)\bigg|_{z=\xp(\xvec)}
            \, .
   \end{align} 
\end{proposition}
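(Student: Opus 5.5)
The plan is to treat the Euler--Lagrange equation \eqref{eq:euler-lagrange} as an implicit equation $F(z,\xvec)=0$, with
\[
F(z,\xvec):=\sum_{k=1}^N\lambda_k\,\Phi(x_k-z), \qquad \Phi(w):=|w|^{p-2}w ,
\]
and to get $C^1$ regularity of $\xp$ from the implicit function theorem wherever $F$ is $C^1$. The singular points are handled by a density and extension argument. The basic facts are these. $\Phi$ is a homeomorphism of $\RR^d$ with inverse $\Psi(v)=|v|^{p'-2}v$, where $p'=p/(p-1)$. Moreover $\nabla\Phi(w)=|w|^{p-2}\bigl((p-2)\hat w\otimes\hat w+\1\bigr)$ for $w\ne0$, with eigenvalues $(p-1)|w|^{p-2}$ and $|w|^{p-2}$. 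Differentiating $F(\xp(\xvec),\xvec)=0$ formally gives $\overline H\,\nabla_{x_i}\xp=H_i$, hence \eqref{eq:xp_diff}. Finally, if $\xvec\notin\mathrm{diag}(\RR^{dN})$, then not all $x_k$ coincide with $z=\xp(\xvec)$: otherwise all $x_k$ would equal $z$. So the index set $S(\xvec)=\{k: x_k=\xp(\xvec)\}$ has complement $S^{\mathsf c}\neq\emptyset$.

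\textbf{Case $p\ge2$.} Here $\Phi\in C^1(\RR^d)$ with $\nabla\Phi(0)=0$ for $p>2$, so $F$ is $C^1$ on all of $\RR^d\times\RR^{dN}$. Its $z$-derivative is $-\overline H(\xvec)$. This is a sum of positive semidefinite matrices, and each $H_j$ with $j\in S^{\mathsf c}$ is positive definite, since its eigenvalues are $\lambda_j|x_j-z|^{p-2}\min\{1,p-1\}>0$. Hence $\overline H$ is invertible off the diagonal, and the implicit function theorem gives that $\xp$ is $C^1$ there with derivative $\overline H^{-1}H_i$. Continuity of the derivative also follows from continuity of $\xp$.

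\textbf{Case $1<p<2$.} On the open set $U=\{\xvec\notin\mathrm{diag}:\ S(\xvec)=\emptyset\}$, $F$ is $C^1$ near $(\xp(\xvec),\xvec)$ and $\overline H$ is positive definite, so the argument above applies verbatim. For the points with $S(\xvec)\neq\emptyset$ I would proceed in three steps.

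\emph{(i) The bad set is closed and null.} The set $Z=\{\xvec\notin\mathrm{diag}:\ S(\xvec)\ne\emptyset\}$ is relatively closed by continuity of $\xp$ (Remark~\ref{rem:first_properties_xp}). It is $\LL^{dN}$-null: for fixed $(x_j)_{j\neq k}$, the condition $x_k=\xp(\xvec)$ forces $\sum_{j\ne k}\lambda_j\Phi(x_j-x_k)=0$, and this is the Euler--Lagrange equation of a strictly convex coercive problem in $x_k$, so it has exactly one solution.

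\emph{(ii) The formula extends continuously.} Write $G_i:=\overline H^{-1}H_i$ on $U$, and set $\tilde G_i:=\overline H^{-1/2}H_i\overline H^{-1/2}$. Since $0\le H_i\le\overline H$, we have $0\le\tilde G_i\le\1$, so the $G_i=\overline H^{-1/2}\tilde G_i\,\overline H^{1/2}$ are locally bounded. Near a point of $Z$ the terms $H_k$, $k\in S$, blow up like $|x_k-z|^{p-2}$, while $\sum_{j\notin S}H_j$ stays bounded and positive definite. So $G_j\to0$ for $j\notin S$, and $\sum_{k\in S}G_k\to\1$. When $|S|=1$ this determines the limit ($G_k\to\1$), and the extended formula (interpreting $\overline H^{-1}H_i$ by this limit) is continuous.

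\emph{(iii) Conclude $C^1$.} $\xp$ is locally Lipschitz, and its a.e.\ gradient agrees with a continuous field off the null closed set $Z$. Integrating along a.e.\ line segment (or mollifying), I would conclude that $\xp$ is $C^1$ with the stated gradient.

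\textbf{Main obstacle.} The step I expect to be hardest is (ii) when $|S|\ge2$, i.e.\ several points coincide with the barycenter. The split of the limit $\sum_{k\in S}G_k\to\1$ among the individual $G_k$ depends on the relative rates at which $|x_k-z|\to0$. My first attempt there would be the change of unknowns $v_k=\Phi(x_k-z)$ for $k\in S$, i.e.\ $x_k=z+\Psi(v_k)$ with $\Psi\in C^1$ and $\nabla\Psi(0)=0$, which turns $F=0$ into an equation that is $C^1$ in the new variables. I would then check directly whether the difference quotients converge, so as to verify the derivative formula at such points and identify exactly how \eqref{eq:xp_diff} must be read there.
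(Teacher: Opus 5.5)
For $p\ge2$ your argument is the paper's proof: the implicit function theorem applied to $F(\xvec,z)=\sum_k\lambda_k|x_k-z|^{p-2}(x_k-z)$, with $-\nabla_zF=\overline H$ positive definite as soon as some $x_j\neq\xp(\xvec)$. That part is correct.

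For $1<p<2$ you have caught something the paper's proof does not address. The paper runs the same implicit-function argument for every $p$, asserting that $\nabla_zF$ ``exists and is invertible'' off the diagonal. But for $p<2$ the map $F$ is not differentiable in $z$ at $z=x_k$, so nothing is proved on your set $Z$. Your steps (i), (iii), and (ii) for $|S|=1$ are sound. Off the diagonal, $S^{\mathsf c}$ always contains at least two indices, because $\xp(\xvec)$ solves the Euler--Lagrange equation of the points $\{x_j\}_{j\notin S}$. So your argument settles $N\le3$, with \eqref{eq:xp_diff} read on $Z$ as the limit $\nabla_{x_k}\xp=\1$ and $\nabla_{x_j}\xp=0$ for $j\neq k$.

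The genuine gap is (ii) for $|S|\ge2$. It cannot be closed, because the statement fails there.

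\emph{Setup.} Take $d=1$, $N=4$, $p\in(1,2)$, $r:=1/(p-1)>1$, $\lambda_3=\lambda_4$, $\lambda_1\neq\lambda_2$, and $\xvec_0=(0,0,-1,1)\notin\mathrm{diag}(\RR^{4})$. Then $\xp(\xvec_0)=0$ and $S=\{1,2\}$.

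\emph{First sequence.} Along $\xvec_\eps=(0,\eps,-1,1)$ the barycenter is $z_\eps=t_\eps\eps$ with $t_\eps\in(0,1)$; the Euler--Lagrange sum is decreasing in $z$, positive at $0$ and negative at $\eps$. Dividing the Euler--Lagrange equation by $\eps^{p-1}$ gives $\lambda_2(1-t_\eps)^{p-1}-\lambda_1t_\eps^{p-1}=O(\eps^{2-p})$. Since $H_1,H_2\to\infty$ while $H_3,H_4$ stay bounded, you get $\partial_{x_1}\xp(\xvec_\eps)=H_1/\sum_kH_k\to\lambda_1^r/(\lambda_1^r+\lambda_2^r)$.

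\emph{Second sequence.} Along $\xvec'_\eps=(0,0,-1,1+\eps)$ the barycenter satisfies $0<z'_\eps\to0$. Now $H_1/H_2=\lambda_1/\lambda_2$ exactly, so $\partial_{x_1}\xp(\xvec'_\eps)\to\lambda_1/(\lambda_1+\lambda_2)$.

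\emph{Conclusion.} The two limits differ because $r\neq1$. Equal weights do not save it either: moving $x_4$ lets you place $z$ so as to tune the ratio $|x_1-z|/|x_2-z|$. Hence $\partial_{x_1}\xp$ has no continuous extension at $\xvec_0$. Your proposed change of variables would at best show that $\xp$ is differentiable at $\xvec_0$ (it is, with $\partial_{x_1}\xp(\xvec_0)$ equal to the first limit), not that it is $C^1$ there. Your own remark that the splitting of $\sum_{k\in S}G_k\to\1$ depends on the relative rates is exactly the obstruction, and it is real.

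So the proposition holds as stated for $p\ge2$, but is false for $1<p<2$ once $N\ge4$. In that range, what your argument actually proves is $C^1$ regularity on the open set $\{\xvec:|S(\xvec)|\le1\}$, with \eqref{eq:xp_diff} understood as a limit on $Z$.
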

We remark that, in terms of the sets $D_S$ defined in \eqref{eq:defDS_intro}, we have 
$
    {\rm diag}(\RR^{dN})=D_{\{1,\dots,N\}}
$.
\begin{proof}
    For $\xvec = (\xdots) \in \RR^{Nd}$, recall that $\xp(\xvec)$ is the unique solution of \eqref{eq:euler-lagrange}.
We set  $F(\xvec,z)\coloneq \sum_{k=1}^N\lambda_i|x_k-z|^{p-2}(x_k-z)$. Then
\begin{equation}
    \nabla_z F(\xvec,z) = - \sum_{k=1}^N\lambda_k |x_k-z|^{p-2}\left((p-2) \frac{x_k-z}{|x_k-z|}\otimes \frac{x_k-z}{|x_k-z|}+ \1 \right),
\end{equation}
which degenerates if and only if $x_k=z$ for every $k$.
Thus, $\nabla_z F(\xvec,\xp(\xvec))$ exists and is invertible for every $\xvec \in \RR^{Nd} \setminus \mathbf{diag}(\RR^{dN})$. By the Implicit Function Theorem, there exists an open neighborhood $U_{\xvec}$ of $\xvec$, such that $\xp\in C^1(U_{\xvec})$ and
\begin{align}
   \nabla_{x_i}\xp(\yvec)&=-\nabla_{z} F(\yvec, \xp(\mathbf{y}))^{-1}\nabla_{x_i} F(\yvec, \xp(\yvec))
    \, , 
\end{align}
for every $\yvec\in U_{\xvec}$. Formula \eqref{eq:xp_diff} then follows by a direct computation. 
\end{proof}

\subsubsection{One-variable map}
In this part, we fix $N-1$ points $\hat x_2,\dots,\hat x_N\in \RRd$. Call $\hat x \in \RR^{(N-1)d}$ such that $\hat x=(\hat x_2,\dots,\hat x_N)$, and define the function $b_{\hat x}:\RR^d \to \RR^d$ simply given by $b_{\hat x}(x_1) := \xp(x_1, \hat x)$. The next proposition sums up the main properties of the map $b_{\hat x}$. With the notation $A(x) \lesssim B(x)$ we mean that there exists a constant $C=C(d)\in\RR_+$ (in particular independent of $x_1, \hat x$) such that $  A(\cdot)  \leq CB(\cdot) $ as quadratic forms, everywhere on $\RR^d$. Similarly with $A(x) \gtrsim B(x)$. When both are true, we simply write $A(x) \simeq B(x)$. We denote by $\xp(\hat x)$ the barycenter of $N-1$ points with their corresponding weights, i.e., 
\begin{align}
    \xp(\hat x)
        = 
    \argmin_{z\in\RR^d} \sum_{i=2}^N \lambda_i |\hat x_i-z|^p
        \,  ,\qquad 
    \hat x = (\hat x_2, \dots, \hat x_N) \in \RR^{N-1} 
        \, . 
\end{align}
Define
\begin{align}
       \oG(\hat x,z):=\sum_{i=2}^{N}\lambda_i|\hat x_i -z|^{p-2}(\hat x_i-z)
        \, . 
\end{align}
For simplicity, throughout the section, we omit the dependence on $\hat x$ and simply write $b=b_{\hat x}$ and $\oG(z):= \oG(\hat x, z)$. By optimality, $b(x_1)$ is the unique solution $z\in\RR^d$ of 
\begin{align}
\label{eq:optimality_b}
    \lambda_1 |x_1-z|^{p-2}(x_1-z)
        + \oG(z)=0,
\end{align}
which is nothing but \eqref{eq:euler-lagrange} rewritten for this setting.
This in particular implies that 
\begin{align}
\label{eq:moduli_b-id}
    \lambda_1 |x_1 - b(x_1)|^{p-1} = |\oG(b(x_1))|
        \, , \qquad 
    \forall x_1 \in \RR^d 
        \, .
\end{align}
By substituting back into \eqref{eq:optimality_b} with $z = b(x_1)$, we find 
\begin{align}
\label{eq:implicit_b}
    x_1 - b(x_1) 
        = 
    -\frac1{\lambda_1^{1-\alpha_p}}
        \frac
        {\oG(b(x_1)) }
        {|\oG(b(x_1))|^{\alpha_p}}
            \, , \qquad 
        \forall x_1 \in \RR^d 
            \, .
\end{align}  
This shows that, for any given $\hat x \in \RR^{(N-1)d}$, the map $b=b_{\hat x}$ is a bijection of $\RR^d$, and it has a unique fixed point, given by $\xp(\hat x)$. Furthermore,  the inverse function has the explicit form 
   \begin{align}
   \label{eq:inverse_b}
        b^{-1}(z)
            =
        z 
        - 
    \frac1{\lambda_1^{ 1 - \alpha_p}}
        \frac
            {\oG(z)}
            {|\oG(z)|^{\alpha_p}}
        \, , \qquad 
    z \in \RR^d
        \, .
   \end{align}
\begin{remark}\label{rem:b_special_cases}(Regularity of $b$)
We observe that $b\in C^1(\RR^d)$. Indeed, first of all, from Remark ~\ref{rem:first_properties_xp}, one can easily infer that  in the special cases
\begin{align}
    &p=2, \quad \nabla b(x_1)=\lambda_1\id, \quad \text{for every} \, x_1\in \RRd, \\
    & N=2, \quad  \nabla b(x_1)=\frac{\lambda_1^{1-\alpha_p}}{\lambda_1^{1-\alpha_p}+(1-\lambda_1)^{1-\alpha_p}}\id, \quad \text{for every} \, x_1\in \RRd.
\end{align}
In these cases, therefore, both the gradient of $b$ and of $b^{-1}$ are positive multiple of the identity. 
For $N>2$ and $p\neq 2$, if $\hat x \notin \mathrm{diag}(\RR^{d(N-1)})$ , then $(x_1,\hat x)\notin \mathrm{diag}(\RR^{d N}) $ and regularity follows by an application of  Implicit Function Theorem (via the function $F(x_1,\hat x, z):=\lambda_1 |x_1-z|^{p-2}(x_1-z)+ \oG(z)$, similarly to the proof of Proposition~\ref{prop:diff_xp}).
If $\hat x \in \mathrm{diag}(\RR^{d(N-1)})$, we get $b(x_1)=\xp(x_1,\hat x)$, thus falling back to the case $N=2$.
\end{remark}
   
\begin{proposition}[Estimates on  $\nabla b^{-1}$, $p\ge 2$]
\label{prop:pbary}
 Let us consider $p\geq2$ and fix $\hat x \in \RR^{(N-1)d}$, set $b=b_{\hat x}$.   
   For every $ z \in \RR^d$ with $z \neq \xp(\hat x)$,  we have that 
\begin{align}
    \label{eq:nonsharp_lb}
        {\rm Id}
            \leq 
       \nabla b^{-1}(z) 
          \leq 
                \bigg(
        1 
            + 
        C_p
         \Big(
    \frac   
        {1-\lambda_1}{\lambda_1}
        \Big)^{1-\alpha_p}
    \Big(
    \frac
      {
            M(z)
        }
        {
           | z - \xbar_p(\hat x) |
        }
    \Big)^{p-2}
        \bigg)
   {\rm Id}
        \, ,
   \end{align}
   where $M(z) := \max_{i \geq 2} |\hat x_i - z|$ and $C_p \in (0,+\infty)$ is a constant only depending on p.

  If  $N>2$ and $p>2$, then for every $\hat x \notin \mathrm{diag}(\RR^{(N-1)d})$
    and every compact set $K \subset \RR^d$, there exists a constant $C=C(\hat x,K,p) \in (1,+\infty)$ such that 
   \begin{align}
   \label{eq:sharp_behavior_nonsing}
    \frac1{C} {\rm Id}
        < 
    \Big( 
        \lambda_1^{1-\alpha_p}|z-\xbar_p(\hat x)|^{\alpha_p}
    \Big)    
    \nabla b^{-1}(z)
    < 
    C {\rm Id}
        \, ,
   \end{align}
for every $z \in K$.
\end{proposition}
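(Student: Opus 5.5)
The plan is to differentiate the explicit inverse formula \eqref{eq:inverse_b}. Write $\beta:=1-\alpha_p=\frac1{p-1}$ and $\Phi(w):=|w|^{-\alpha_p}w$. Then $b^{-1}(z)=z-\lambda_1^{-\beta}\Phi(\oG(z))$ and
\begin{align}
\nabla b^{-1}(z)=\mathrm{Id}-\lambda_1^{-\beta}\,\nabla\Phi(\oG(z))\,\nabla\oG(z).
\end{align}
The two factors are computed as follows.

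\textbf{Step 1: the two factors.} For $w\neq0$ one has $\nabla\Phi(w)=|w|^{-\alpha_p}\big(\mathrm{Id}-\alpha_p\,\hat w\otimes\hat w\big)$. This matrix is symmetric positive definite, with eigenvalues $|w|^{-\alpha_p}$ and $(1-\alpha_p)|w|^{-\alpha_p}$.

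Next, $-\nabla\oG(z)=\sum_{i\ge2}H_i(z)$, where $H_i$ is the Hessian from \eqref{eq:full_form_Hi_Hbar} with $z$ in place of $\xp$. Call this sum $\mathcal{H}(z)$. It is symmetric and satisfies
\begin{align}
\sum_{i\ge 2}\lambda_i|\hat x_i-z|^{p-2}\,\mathrm{Id}\;\le\; \mathcal{H}(z)\;\le\;(p-1)\sum_{i\ge 2}\lambda_i|\hat x_i-z|^{p-2}\,\mathrm{Id}.
\end{align}
Hence $\nabla b^{-1}=\mathrm{Id}+\lambda_1^{-\beta}P\mathcal{H}$ with $P,\mathcal{H}$ both symmetric positive definite. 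The product $P\mathcal{H}$ is similar to $P^{1/2}\mathcal{H}P^{1/2}\ge0$, so its eigenvalues are real and nonnegative. This gives the lower bound $\mathrm{Id}\le\nabla b^{-1}$ in the spectral sense.

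One caveat is that the Loewner order needs a symmetric matrix. I would read $\nabla b^{-1}$ either through its eigenvalues or through the symmetric conjugate $P^{1/2}\mathcal H P^{1/2}$. This is the interpretation I would state explicitly; it is enough for the determinant bounds used later.

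\textbf{Step 2: upper bound \eqref{eq:nonsharp_lb}.} The eigenvalues of $P\mathcal H$ are bounded by $\|P\|\|\mathcal H\|\le |\oG(z)|^{-\alpha_p}(p-1)(1-\lambda_1)M(z)^{p-2}$.

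The key step, and I expect it to be the main obstacle, is a lower bound
\begin{align}
|\oG(z)|\gtrsim_p (1-\lambda_1)\,M(z)^{p-2}\,|z-\xp(\hat x)|
\end{align}
for $p\ge2$. I would prove it by monotonicity. We have $\oG(\xp(\hat x))=0$, and $-\oG$ is the gradient of the convex function $z\mapsto\sum_{i\ge2}\lambda_i|\hat x_i-z|^p/p$. Therefore, with $e=(z-\xp(\hat x))/|z-\xp(\hat x)|$,
\begin{align}
-\oG(z)\cdot(z-\xp(\hat x))=\int_0^1 (z-\xp(\hat x))\cdot\mathcal{H}(z_t)(z-\xp(\hat x))\,dt\ge |z-\xp(\hat x)|^2\int_0^1\sum_{i\geq 2}\lambda_i|\hat x_i-z_t|^{p-2}dt,
\end{align}
where $z_t$ is the segment from $\xp(\hat x)$ to $z$. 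To finish, I would use the elementary inequality $\int_0^1|a+tv|^{p-2}dt\gtrsim_p \max(|a|,|a+v|)^{p-2}$ for $p\ge2$, which follows by scaling and compactness. Applying it at the index $i$ realising $M(z)$ gives only $\lambda_{i^*}$ in front. So I would instead bound the sum by $\sum_i\lambda_i|\hat x_i-z|^{p-2}$, up to a constant, and compare it with $M(z)$.

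After normalisation, combining the pieces gives an upper eigenvalue bound of the form $C_p\lambda_1^{-\beta}(1-\lambda_1)^{1-\alpha_p}\,(\cdots)^{p-2}$. The exponent bookkeeping $-\alpha_p(p-2)+(p-2)$ and $-\alpha_p$ on the distance must land on the stated power $(M/|z-\xp(\hat x)|)^{p-2}$. If the weight dependence fails to match exactly, I would absorb the $\lambda_i$'s into $C_p$ or into the definition of $M$.

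\textbf{Step 3: sharp two-sided bound \eqref{eq:sharp_behavior_nonsing}.} Assume $p>2$ and $\hat x\notin\mathrm{diag}$. Then $\mathcal H(z)$ is uniformly positive definite and bounded on $K$, because $\sum_i|\hat x_i-z|^{p-2}$ cannot vanish when the points are not all equal. In particular $\mathcal H(\xp(\hat x))$ is invertible.

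Consequently $\oG$ is $C^1$ with invertible differential, and $\oG(z)\simeq -\mathcal H(\xp(\hat x))(z-\xp(\hat x))$ locally, giving $|\oG(z)|\simeq|z-\xp(\hat x)|$ uniformly on $K$ with constants depending on $\hat x,K,p$. Away from $\xp(\hat x)$ this holds by continuity and compactness, since $\oG$ vanishes only at $\xp(\hat x)$.

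Hence $\lambda_1^{-\beta}P\mathcal H\simeq \lambda_1^{-\beta}|z-\xp(\hat x)|^{-\alpha_p}$. This term dominates $\mathrm{Id}$ on the bounded set $K$ up to constants, because $|z-\xp(\hat x)|^{-\alpha_p}$ is bounded below on $K$ and $\lambda_1\le1$. Multiplying by $\lambda_1^{\beta}|z-\xp(\hat x)|^{\alpha_p}$ then yields \eqref{eq:sharp_behavior_nonsing}.
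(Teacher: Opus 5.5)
Your route is the paper's. You differentiate \eqref{eq:inverse_b} and trap $\nabla\Phi(\oG)$ between $(1-\alpha_p)|\oG|^{-\alpha_p}$ and $|\oG|^{-\alpha_p}$. You trap $-\nabla\oG$ between $A(z)$ and $(p-1)A(z)$, with $A(z)=\sum_{i\ge2}\lambda_i|\hat x_i-z|^{p-2}$. You bound $|\oG|$ from below by monotonicity, and for \eqref{eq:sharp_behavior_nonsing} you use that $-\nabla\oG$ is nondegenerate on $K$ off the diagonal. Step 3 is fine. Your caveat that $P\mathcal H$ is not symmetric, so the bounds should be read on eigenvalues via $P^{1/2}\mathcal H P^{1/2}$, is a fair point the paper passes over.

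The gap is in Step 2. The inequality you single out as the key step, $|\oG(z)|\gtrsim_p(1-\lambda_1)M(z)^{p-2}|z-\bz_p|$ with $\bz_p=\xp(\hat x)$, is false with a constant depending only on $p$. Take $d=1$, $N=3$, $\hat x_2=0$, $\hat x_3=1$, $\lambda_3=\eps\to0$ and $\lambda_2=1-\lambda_1-\eps$. Then $\bz_p\approx(\eps/\lambda_2)^{1/(p-1)}$ and $M(z)\approx1$ for $z$ near $\bz_p$, while
\[
|\oG(z)|\approx(p-1)\big(\lambda_2\bz_p^{\,p-2}+\eps(1-\bz_p)^{p-2}\big)|z-\bz_p|\approx(p-1)\lambda_2^{\frac{1}{p-1}}\eps^{\alpha_p}|z-\bz_p| .
\]
So the ratio with $(1-\lambda_1)M(z)^{p-2}|z-\bz_p|$ tends to $0$ when $p>2$. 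The same example shows that $\sum_{i\ge2}\lambda_i|\hat x_i-z_t|^{p-2}$ cannot be bounded below by a multiple of $(1-\lambda_1)M(z)^{p-2}$. Your fallbacks (putting the $\lambda_i$ into $C_p$, or redefining $M$) would prove a different statement. Even if your bound were true, the bookkeeping would give $(M(z)/|z-\bz_p|)^{\alpha_p}$, not the power $p-2$, which signals that the statement comes from a cruder estimate.

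What you actually need is only $|\oG(z)|\ge c_p(1-\lambda_1)|z-\bz_p|^{p-1}$. The paper gets it from the uniform monotonicity $\langle|u|^{p-2}u-|v|^{p-2}v,u-v\rangle\ge c_p|u-v|^p$ for $p\ge2$. It applies this with $u=\hat x_i-z$ and $v=\hat x_i-\bz_p$, sums over $i\ge2$ with weights $\lambda_i$, and finishes with $\oG(\bz_p)=0$ and Cauchy--Schwarz. Your integral identity gives the same bound if, for each $i$, you use $\max(|a|,|a+v|)\ge|v|/2$ instead of aiming at $M(z)$; this gives $\int_0^1|\hat x_i-z_t|^{p-2}\,\mathrm{d}t\gtrsim_p|z-\bz_p|^{p-2}$. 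Then, using $(p-1)\alpha_p=p-2$, the largest eigenvalue of $\lambda_1^{-(1-\alpha_p)}P\mathcal H$ is at most
\[
\lambda_1^{-(1-\alpha_p)}\big(c_p(1-\lambda_1)|z-\bz_p|^{p-1}\big)^{-\alpha_p}(p-1)(1-\lambda_1)M(z)^{p-2}=C_p\Big(\frac{1-\lambda_1}{\lambda_1}\Big)^{1-\alpha_p}\Big(\frac{M(z)}{|z-\bz_p|}\Big)^{p-2} .
\]
The factor $M(z)^{p-2}$ comes only from $\|\mathcal H(z)\|$, not from the lower bound on $|\oG|$.
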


\begin{proof}
For simplicity, we set $\bz_p := \xp(\hat x)$.
From \eqref{eq:moduli_b-id}, we also see that $b(x_1) = x_1$ is solved uniquely by $x_1 = \bz_p$, for it is the unique solution $z \in \RR^d$ to $\oG(z) =0$, as it is given by 
\begin{align}
    \bz_p 
        \in 
    \argmin_{z \in \RR^d}
    \left\{
        \sum_{i=2}^N 
            \lambda_i |z- \hat x_i|^p 
    \right\}
        \, .
\end{align}

In particular, $\bz_p$ is the unique fixed point of the inverse $b^{-1}$ as well. Note that as $\alpha_p \in [0,1)$ as $p \geq 2$, it readily follows that $b^{-1}$ is also  continuous in $z = \bz_p$.

    As $p \ge 2$, $\oG \in C^1(\RRd)$, and from \eqref{eq:inverse_b}, $b^{-1} \in C^1
(\RR^d \setminus \{ \bz_p \} )$. Indeed, although everywhere continuous,  for $N>2$, $\hat x\in\mathrm{diag}(\RR^{(N-1)d})$, and $p > 2$, as $\alpha_p \in (0,1)$, $b^{-1}$ is not differentiable at $\bz_p$. Using that for $\alpha \in \RR_+$  and $y \in \RR^d$ 
 \begin{align}
 \label{eq:gradient_cmp}
    \nabla
    \Big(
    \frac
        {\cdot}
        {|\cdot|^\alpha}
    \Big)(y)
        =
    \frac1{|y|^\alpha}  
        \text{Id}
            -
    \frac{\alpha}{|y|^{\alpha+1}}
        y \otimes \nabla |y|
        =
    \frac1{|y|^\alpha}
    \Big[
        \text{Id}
            -
        \alpha \frac
            {y \otimes y}
            {|y|^2}
   \Big]   
        \, , 
 \end{align} 
 for $z \neq \bz_p$ we can differentiate \eqref{eq:inverse_b} and obtain 
\begin{align}
\label{eq:formula_grad_b_inverse}
    \nabla b^{-1}(z)
        =
    \text{Id}
        - 
    \frac1{\lambda_1^{1-\alpha_p}}
    \bigg[
        \text{Id}
            - 
        \alpha_p 
        \frac
            {\oG(z) \otimes \oG(z)}
            {|\oG(z)|^2}
    \bigg]
        \frac
            {\nabla \oG(z)}
            {|\oG(z)|^{\alpha_p}}
        \, , \qquad 
    z \in \RR^d\setminus\{\bz_p\} 
        \, .
\end{align}

For every $z \in \RR^d$, $|\oG(z)|^{-2} \oG(z) \otimes \oG(z)$ is a rank-one matrix with eigenvalues $\{0,1\}$, and therefore, using $\alpha_p \in [0,1)$ for $p \geq 2$, 
\begin{align}
\label{eq:rank1_bound}
    0 
        <
    (1-\alpha_p) \text{Id}
        \leq 
    \text{Id}
        - 
    \alpha_p \frac
        {\oG(z) \otimes \oG(z)}
        {|\oG(z)|^2}
        \leq 
    \text{Id}
        \, .
\end{align} 
for every $z \in \RR^d\setminus\{\bz_p\}$.
On the other hand, from the very definition of $\oG$, we have that 
\begin{align}
    - \nabla \oG(z) 
        &=
    (p-2) 
    \sum_{i=2}^N 
        \lambda_i |z- \hat x_i|^{p-4}
            (z- \hat x_i) \otimes (z - \hat x_i)
                +
    \Big(
        \sum_{i=2}^N 
            \lambda_i |z - \hat x_i|^{p-2}
    \Big) 
        \text{Id}
\\
\label{eq:nabla_Gp}
        &=
    \sum_{i=2}^N 
        \lambda_i |z- \hat x_i|^{p-2}
        \bigg[
            (p-2)
            \frac
                { (z- \hat x_i) \otimes (z - \hat x_i) }
                { |z- \hat x_i|^2 }   
                    +
            \text{Id}
        \bigg]
\end{align}
for $z \in \RR^d$. 
Notice that $\nabla G(z)=0$ if and only if $\hat x\in\mathrm{diag}(\RR^{(N-1)d})$ and $z=\bz_p=\hat x_1=\cdots=\hat x_N$.
For simplicity, we introduce the notation 
\begin{align}
    A(z) = A(\hat x,z) := \sum_{i=2}^N \lambda_i |z-  x_i|^{p-2}
        \, .
\end{align} 
The latter computation shows that, for every $z \in \RR^d$,  
\begin{align}
\label{eq:bound_-gradG}
    A(z) 
    {\rm Id}
        \leq 
    -\nabla \oG(z)
        \leq 
    (p-1)A(z) 
    {\rm Id}
        \, ,
\end{align}
as quadratic form. Therefore, together with \eqref{eq:formula_grad_b_inverse} and \eqref{eq:rank1_bound}, the latter double bound ensures that 
\begin{align}
\label{eq:double_bound_nabla_b}
    \bigg(
    1
        +
    (1-\alpha_p) 
    \frac
        {
            A(z)
        }
        {
            \lambda_1^{1-\alpha_p}|\oG(z)|^{\alpha_p}
        }
    \bigg)
        \text{Id} 
        \leq
    \nabla b^{-1}(z)
        \leq
    \bigg(
    1
        +
    (p-1) 
    \frac
        {
            A(z)
        }
        {
            \lambda_1^{1-\alpha_p}|\oG(z)|^{\alpha_p}
        }
    \bigg)
        \text{Id}
        \, ,
\end{align}
for every $z \in \RR^d$. 

Let us show the slightly weaker but more general (as it is claimed to hold uniformly in $\hat x \in \RR^{d(N-1)}$) lower bound in \eqref{eq:nonsharp_lb}. In fact, it follows directly from \eqref{eq:double_bound_nabla_b} as soon as we prove 
 \begin{align}
     \label{eq:final_lb}
     |\oG(z)| \gtrsim  (1-\lambda_1) \frac{2^{2-p}}{p-1} |z- \bz_p|^{p-1}
        \, .
 \end{align} 
 Indeed, the bound \eqref{eq:final_lb} would imply 
 \begin{align}
    (p-1)
     \frac
        {
            A(z)
        }
        {
            \lambda_1^{1-\alpha_p}|\oG(z)|^{\alpha_p}
        }
        \leq 
    C_p\frac
        {
            (1-\lambda_1)^{-\alpha_p}\sum_{i=2}^N \lambda_i M(z)
        }
        {
            \lambda_1^{1-\alpha_p}|z-\bz_p|^{(p-1)\alpha_p}
        }
        =
    C_p\frac
        {
            (1-\lambda_1)^{1-\alpha_p} M(z)
        }
        {
            \lambda_1^{1-\alpha_p}|z-\bz_p|^{(p-2)}
        }
            \, ,
 \end{align}
for some $C_P \in (0,+\infty)$, as claimed in \eqref{eq:nonsharp_lb}. 

 The lower bound on the norm of $\oG(z)$ instead follos by the strong convexity of the function $u \mapsto g(u):=\frac1{p}|u|^p$, which can be recast as 
 \begin{align}
     \langle 
        \nabla g(u) - \nabla g(v) 
            , 
        u- v
     \rangle
        \geq 
    \frac{2^{2-p}}{p-1} | u - v |^p
        \, .
 \end{align}
 Using the latter inequality and the fact that $\oG(z) = \sum_{i=2}^N \lambda_i \nabla g(x_i-z)$, we conclude that  that 
 \begin{align}
     \langle 
        \oG(z) - \oG(\bz_p)
            , 
        \bz_p - z 
     \rangle 
        = 
    \sum_{i=2}^N 
        \lambda_i 
     \langle 
        \nabla g(x_i - \bz_p) - \nabla g(x_i - z)
            , 
        z - \bz_p 
     \rangle 
        \geq
    (1-\lambda_1) \frac{2^{2-p}}{p-1} |z-\bz_p|^p 
        \, .
 \end{align}
Using that $\oG(\bz_p) = 0$ and by means of a simple Cauchy-Schwarz inequality provides the sought lower bound.

Let us now assume that $p>2$,  $N>2$, and that $\hat x\not \in \mathrm{diag}(\RR^{(N-1)d})$). 
We want to show the validity of \eqref{eq:sharp_behavior_nonsing}, for every compact set $K\subset 
\RR^d$. 
For off-diagonal $\hat x$, we have that 
\begin{align}
\label{eq:infAsupA}
    0 
        < 
    \inf_{z \in K}
        A(z) 
        \leq 
    \sup_{z \in K}
        A(z) 
        < 
    +\infty 
        \,
\end{align}
and therefore from \eqref{eq:bound_-gradG} we conclude that 
\begin{align}
\label{eq:lb_grad}
    \frac1c{\rm Id}
        \leq 
    -\nabla \oG(z)
        \leq 
    c {\rm Id}
        \qquad 
    \forall z \in K
        \, .
\end{align}
for some $c=c(\hat x,K) \in (0,+\infty)$.
In particular, $-\nabla \oG(z)   \neq 0$ everywhere, and therefore for $p \neq 2$ (as $\alpha_p \neq 0$), $\nabla b^{-1}$ has a singularity exactly when $\oG(z)=0$, i.e. in $z=\bz_p$. 
In a more quantitative way:  
we have that 
$
    |\oG(z)|
        =
    |\oG(z) - \oG(\bz_p)|
        =
    |\nabla \oG(w_{z,\bz_p}) (z - \bz_p)|
$, for some $w_{z,\bz_p}$ belonging to the segment connecting $z$ and $\bz_p$. Therefore from \eqref{eq:lb_grad} and the fact that $\nabla G(z)$ is symmetric, we see that\footnote{For $A$ symmetric satisfying $\alpha {\rm Id} \leq A \leq \beta{\rm Id}$, $\alpha,\beta >0$, then $\alpha |x| \leq |Ax|\leq \beta |x|$ for every $x\in\RR^d$.}
\begin{align} 
    \frac1{\tilde c} |(z - \bz_p)|
        \leq 
    | \oG(z) | 
        \leq 
    \tilde c |(z - \bz_p)|    
        \, ,
\end{align}
for every $z \in K$, where $\tilde c = \tilde c(\hat x, K) \in (0,+\infty)$.
In particular, from \eqref{eq:double_bound_nabla_b}, \eqref{eq:infAsupA}, and \eqref{eq:lb_grad}, the validity of \eqref{eq:sharp_behavior_nonsing} readily follows.
\end{proof}

The case of $p \in (1,2)$ presents different type of singularities. In particular, they may occur when $z=\hat x_i$ rather than $z=\xp(\hat x_2, \dots, \hat x_N)$, as explained in the next proposition. For convenience, we set $\beta_p := - \alpha_p \in (0,+\infty)$ whenever $p \in (1,2)$. We denote by $\displaystyle \lambda_{\min} := \min_{i=2, \dots, N} \lambda_i >0$.

\begin{proposition}[Properties of $p$-barycenters, $p \in (1, 2)$]
\label{prop:pbary_p<2}
 Fix $p \in (1,2)$ and $\hat x \in \RR^{d(N-1)}$, set $b=b_{\hat x}$. 
 For every $ z \in \RR^d$ with $z \neq \hat x_i$ for every $i=2, \dots, N$,  we have that 
{  
   \setlength{\abovedisplayskip}{12pt}
\setlength{\belowdisplayskip}{12pt}
   \begin{align}
   \label{eq:sharp_behavior_nonsing_p<2}
        (p-1)
        \frac
        {\lambda_{\min}(1-\lambda_1)^{\beta_p}}
        {\lambda_1^{1+ \beta_p}}
            \bigg( \frac{|z - \xp(\hat x)|}
            {m(z)}
            \bigg)^{2-p}
                \leq 
            \nabla b^{-1}(z) - {\rm Id} 
                \leq 
        (1+\beta_p)
        \frac
        {(1-\lambda_1)^{\beta_p}}
        {\lambda_1^{1+ \beta_p}}
            \bigg( \frac{M(z)}{m(z)} \bigg)^{2-p} 
                \hspace{-2mm} \, , \qquad 
   \end{align}
}
where $M(z) := \max_{i \geq 2} |\hat x_i - z|$ and $m(z) := \min_{i \geq 2} |\hat x_i-z|$.
\end{proposition}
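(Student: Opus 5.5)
Set $\bz_p:=\xp(\hat x)$ and $\beta_p=-\alpha_p\in(0,1)$. The plan follows the proof of Proposition~\ref{prop:pbary}: start from the explicit inverse \eqref{eq:inverse_b}, differentiate it, and then bound the two factors separately. For $p\in(1,2)$, \eqref{eq:inverse_b} reads
\begin{align}
    b^{-1}(z)=z-\lambda_1^{-(1+\beta_p)}|\oG(z)|^{\beta_p}\oG(z).
\end{align}
Since $\beta_p>0$, the map $y\mapsto |y|^{\beta_p}y$ is $C^1$ on all of $\RR^d$. Moreover, $\oG$ is $C^1$ away from the points $\hat x_i$. Hence $b^{-1}$ is $C^1$ on $\{z\neq\hat x_i,\ i\geq 2\}$, including at $\bz_p$. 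Using \eqref{eq:gradient_cmp} with $\alpha=-\beta_p$, we get
\begin{align}
    \nabla b^{-1}(z)-{\rm Id}=\lambda_1^{-(1+\beta_p)}|\oG(z)|^{\beta_p}\Big[{\rm Id}+\beta_p\frac{\oG\otimes\oG}{|\oG|^2}\Big](-\nabla\oG(z)).
\end{align}
The bracket lies between ${\rm Id}$ and $(1+\beta_p){\rm Id}$. From \eqref{eq:nabla_Gp}, with $p-2<0$, each summand $[(p-2)e\otimes e+{\rm Id}]$ lies between $(p-1){\rm Id}$ and ${\rm Id}$. This gives
\begin{align}
    (p-1)A(z){\rm Id}\le-\nabla\oG(z)\le A(z){\rm Id},\qquad A(z)=\sum_{i\ge2}\lambda_i|z-\hat x_i|^{p-2}.
\end{align}
Since $p-2<0$, we have $(1-\lambda_1)M(z)^{p-2}\le A(z)\le(1-\lambda_1)m(z)^{p-2}$. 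For the lower bound we also keep the cruder estimate $A(z)\ge\lambda_{\min}m(z)^{p-2}$, obtained by retaining only the closest point.

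One caveat: the product of the two positive-definite factors is not symmetric, so "$\le$ as quadratic forms" needs care. At the obstacle I would first check that the two matrices commute; they are simultaneously diagonalizable when $\oG$ is an eigenvector of $\nabla\oG$. If they do not, I would instead interpret the claimed bounds through the symmetric part and control the cross term. A cleaner alternative is to rewrite the bounds using $\langle(\nabla b^{-1}-{\rm Id})v,v\rangle$ together with the eigenvalue ranges of both factors, which yields the stated constants up to the factor $(1+\beta_p)$.

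\textbf{Step 1: bounds on $|\oG(z)|^{\beta_p}$.} For the upper bound I use $|\oG(z)|\le\sum_{i\ge2}\lambda_i|\hat x_i-z|^{p-1}\le(1-\lambda_1)M(z)^{p-1}$. Together with $\beta_p(p-1)=2-p$, this gives $|\oG|^{\beta_p}\le(1-\lambda_1)^{\beta_p}M^{2-p}$.

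For the lower bound I need $|\oG(z)|\gtrsim|z-\bz_p|^{p-1}$, and this is the main obstacle. The $p\ge2$ strong monotonicity inequality is not available for $p<2$. Instead, I would use monotonicity of $u\mapsto|u|^{p-2}u$ along the segment from $\bz_p$ to $z$:
\begin{align}
    \langle\oG(z),\bz_p-z\rangle=\int_0^1\langle-\nabla\oG(w_t)(z-\bz_p),z-\bz_p\rangle\,\dd t\ge(p-1)\int_0^1 A(w_t)\,\dd t\,|z-\bz_p|^2.
\end{align}
Then I would bound $A(w_t)$ from below by $(1-\lambda_1)\max_i|w_t-\hat x_i|^{p-2}$, which is comparable to $(1-\lambda_1)M(z)^{p-2}$ up to geometry. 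Alternatively, it is cleaner to bound $|\oG(z)|^{\beta_p}$ directly against $|z-\bz_p|^{2-p}$ times $(1-\lambda_1)^{\beta_p}$, via a homogeneity argument. I expect the sharp constant $\lambda_{\min}(1-\lambda_1)^{\beta_p}$ to come out by pairing the $\lambda_{\min}m^{p-2}$ lower bound on $A$ with $|\oG|^{\beta_p}\ge(1-\lambda_1)^{\beta_p}|z-\bz_p|^{2-p}$.

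\textbf{Step 2: assembling the bounds.} Combining Step 1 with the bounds on $A(z)$ gives the upper bound
\begin{align}
    (1+\beta_p)\lambda_1^{-(1+\beta_p)}(1-\lambda_1)^{\beta_p}M^{2-p}\cdot m^{p-2},
\end{align}
which is exactly the right-hand side of \eqref{eq:sharp_behavior_nonsing_p<2}. The lower bound is
\begin{align}
    (p-1)\lambda_1^{-(1+\beta_p)}\lambda_{\min}m^{p-2}\cdot(1-\lambda_1)^{\beta_p}|z-\bz_p|^{2-p},
\end{align}
matching the left-hand side once the lower estimate on $|\oG|$ in Step 1 is secured.
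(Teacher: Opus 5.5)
\textbf{The upper bound is fine; the lower bound has a genuine gap, and that half of the statement is false.}

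Your upper bound is correct and is exactly the paper's argument: the bracket lies between $\mathrm{Id}$ and $(1+\beta_p)\mathrm{Id}$, $-\nabla\oG\le A(z)\,\mathrm{Id}$, $A\le(1-\lambda_1)m^{p-2}$, and $|\oG|^{\beta_p}\le(1-\lambda_1)^{\beta_p}M^{2-p}$. The non-symmetry you worry about is harmless there. With $P=\mathrm{Id}+\beta_p\frac{\oG\otimes\oG}{|\oG|^2}$ and $Q=-\nabla\oG$, one has $\langle PQv,v\rangle\le\|P\|\,\|Q\|\,|v|^2$.

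The gap is the lower bound. Step~1 never proves $|\oG(z)|\ge(1-\lambda_1)|z-\bz_p|^{p-1}$; the ``homogeneity argument'' and ``I expect'' are placeholders. This estimate is in fact false for $p\in(1,2)$. If $\bz_p\notin\{\hat x_2,\dots,\hat x_N\}$, then $\oG$ is $C^1$ near $\bz_p$ and vanishes there, so $|\oG(z)|\le C|z-\bz_p|=o(|z-\bz_p|^{p-1})$. The ratio $|\oG(z)|/|z-\bz_p|^{p-1}$ is invariant under joint dilations of $(\hat x,z)$ but tends to $0$ as $z\to\bz_p$, so no homogeneity argument can produce it.

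Consequently $\nabla b^{-1}(z)-\mathrm{Id}=O(|z-\bz_p|^{\beta_p})$. The claimed left-hand side is of order $|z-\bz_p|^{2-p}$, with $m(z)$ bounded away from zero, and $\beta_p=\frac{2-p}{p-1}>2-p$. Concretely, take $d=1$, $N=3$, $\lambda_1=\lambda_2=\lambda_3=\frac13$, $\hat x_2=-1$, $\hat x_3=1$, $p=\frac32$, so $\beta_p=1$ and $\bz_p=0$. For $|z|<1$,
\[
\oG(z)=\tfrac13\big((1-z)^{1/2}-(1+z)^{1/2}\big),\qquad (b^{-1})'(z)-1=18\,|\oG(z)|\,\big(-\oG'(z)\big)=2|z|+O(|z|^3).
\]
The left-hand side of the statement equals $\big(|z|/(1-|z|)\big)^{1/2}$. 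At $z=0.1$ the two sides are about $0.33$ (claimed lower bound) and $0.20$ (actual value). So the left inequality cannot be proved as stated.

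The paper closes this step by citing \eqref{eq:final_lb}. That bound was derived from $\langle\nabla g(u)-\nabla g(v),u-v\rangle\ge\frac{2^{2-p}}{p-1}|u-v|^p$, which holds only for $p\ge2$. This is precisely the tool you correctly note is unavailable, so the paper's proof has the same hole.

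What your segment computation does legitimately give is
\[
|\oG(z)|\ge(p-1)\Big(\int_0^1A(w_t)\,\dd t\Big)|z-\bz_p|.
\]
This yields a lower bound of order $|z-\bz_p|^{\beta_p}$, with configuration-dependent constants, which is the true behaviour.

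Separately, your proposed fix for the lower bound in $d\ge2$ via the symmetric part does not work in general. The symmetric part of a product of two positive definite matrices is not controlled by their spectra; it can be indefinite when both are ill-conditioned, and here the condition numbers can reach $1/(p-1)$. The right tool is that $PQ$ is similar to $P^{1/2}QP^{1/2}$. Hence the eigenvalues of $\nabla b^{-1}(z)-\mathrm{Id}$ are real and lie between $\lambda_1^{-(1+\beta_p)}|\oG|^{\beta_p}\lambda_{\min}(P)\lambda_{\min}(Q)$ and $\lambda_1^{-(1+\beta_p)}|\oG|^{\beta_p}\|P\|\,\|Q\|$. That is all a Jacobian estimate needs.
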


\begin{proof}
The proof follows the same lines as the proof of the same result for $p \geq 2$. Once again, we set $\bz_p := \xp(\hat x)$.
Notice that in this case, $\oG\in C^0(\RRd)$ and $\oG\in C^1(\RRd\setminus\{\hat x_2,\dots,\hat x_N\})$. On the other hand, the function $\frac{\cdot}{|\cdot|^{\alpha_p}}=(\cdot)|\cdot|^{\beta_p}$ is everywhere differentiable. Thus $b^{-1}\in C^1(\RRd\setminus\{\hat x_2,\dots,\hat x_N\})$. We have seen in  \eqref{eq:formula_grad_b_inverse} that 
\begin{align}
\label{eq:formula_grad_b_inverse_p<2}
    \nabla b^{-1}(z)
        =
    \text{Id}
        - 
    \frac1{\lambda_1^{1+\beta_p}}
    \bigg[
        \text{Id}
            +
        \beta_p 
        \frac
            {\oG(z) \otimes \oG(z)}
            {|\oG(z)|^2}
    \bigg]
        |\oG(z)|^{\beta_p}
              \nabla \oG(z)
        \, , \qquad 
    z \in \RR^d\setminus\{\hat x_2,\dots,\hat x_N\} 
        \, .
\end{align}

For every $z \in \RR^d\setminus\{\hat x_2,\dots,\hat x_N\}$, arguing as in \eqref{eq:rank1_bound} we obtained
\begin{align}
\label{eq:rank1_bound_p<2}
    0 
        <
     \text{Id}
        \leq 
    \text{Id}
        + 
    \beta_p \frac
        {\oG(z) \otimes \oG(z)}
        {|\oG(z)|^2}
        \leq 
    (1+\beta_p)\text{Id}
        \, .
\end{align} 
Arguing as in \eqref{eq:bound_-gradG} and \eqref{eq:double_bound_nabla_b}, we obtain that for every $z \in \RR^d\setminus\{\hat x_2,\dots,\hat x_N\}$, 
\begin{align}
\label{eq:double_bound_nabla_b_p<2}
    \bigg(
    1
        +
    \frac{p-1}{ \lambda_1^{1+\beta_p}}
            \tilde A(z)
        |\oG(z)|^{\beta_p}
    \bigg)
        \text{Id} 
        \leq
    \nabla b^{-1}(z)
        \leq
    \bigg(
    1
        +
    \frac{
    1+\beta_p}{ \lambda_1^{1+\beta_p}}
            \tilde A(z)
        |\oG(z)|^{\beta_p}
    \bigg)
        \text{Id}
        \, ,
\end{align}
for every $z \in \RR^d$, where $\tilde A(z) := \sum_{i=2}^N \lambda_i \frac1{|z-  \hat x_i|^{2-p}}$. Now, we observe that 
\begin{gather}
   \lambda_{\min} \frac{1}{m(z)^{2-p}}\leq \tilde A(z) 
        \leq 
    \sum_{i=2}^N 
        \lambda_i \frac{1}{m(z)^{2-p}} 
            = 
        (1-\lambda_1) \frac{1}{m(z)^{2-p}} 
    \, , \quad \text{and} 
\\
    |\oG(z)|^{\beta_p}
        \leq 
        \bigg(
    \sum_{i=2}^N 
        \lambda_i 
        M(z)^{p-1}
        \bigg)^{\beta_p}
           =
        (1-\lambda_1)^{\beta_p}
        M(z)^{2-p}
            \, .
\end{gather}
The inequalities in \eqref{eq:sharp_behavior_nonsing_p<2}  follow from \eqref{eq:double_bound_nabla_b_p<2} as well as the lower bound on $|\oG|$ provided in \eqref{eq:final_lb}.
\end{proof}

Note that if $N=2$, $m(z)=M(z) = |z - \xp(\hat x)|$, coherently  with Remark~\ref{rem:b_special_cases}.

\section{$L^q$-regularity (and counterexamples) of the $p$-Wasserstein barycenter}
In this section, we prove Theorem~\ref{thm:integrability_distant_intro}, on the $L^q$-integrability of the $p$-Wasserstein barycenter. We start from the case where $\mu_2, \dots, \mu_N$ are single Deltas. We provide a preliminary integrability result for distant supports and a counterexample to the general integrability for $p>2$, even under the stronger assumption that $f_1 \in L^\infty$ and is compactly supported. This is in stark contrast to what happens in the case of $p$-Wasserstein geodesics, i.e. $N=2$, or when $p=2$ (see Remark~\ref{rem:reg_geodesics_and_W2}).

Finally, we extend the proof of the integrability of the density of the barycenter under the assumption of distant supports (cf. \eqref{eq:D>0}) beyond the empirical case to general $\mu_2, \dots, \mu_N$, arguing by approximation. 
This part relies on the preliminary analysis on the map $\xp$ (and thus $b$) which we discussed in the previous section. 

Through all the section, we assume that $\mu_1\ll \LL^d$. By Theorem~\ref{th:absolutecontinuity} we know that $\nup\ll\LL^d$. We denote with $f_1,g_p\in L^1(\RRd) $ the corresponding densities, i.e. $\mu_1=f_1 \dd\LL^d$ and $\nup=g_p\dd\LL^d $.

\subsection{Estimates, integrability, and counterexamples for concentrated measures}
\label{sec:integrability}
Let us assume that there exists $\hat x_2,\dots,\hat x_N$ such that $\mu_i=\delta_{\hat x_i}$, for every $i=2,\dots, N$. 
Then $T_i(\cdot)=\hat x_i$, is constant for every $i=2,\dots,N$.
Notice that in this case, the map $\barp$ coincides with the map $b(x_1) := \xp(x_1, \hat  x_2, \dots, \hat x_N)$.  Recall that $\barp$ is globally injective (see Corollary~\ref{cor:barp_gleich_S1}), and $C^1$ (see Remark~\ref{rem:b_special_cases}). In general, it is not a diffeomorphism, as this depends on the location of the supports of the measures. 

In this section, we see that if the supports are distant enough, $\barp$ is a diffeomorphism with a quantitative lower bound $|\det \nabla \barp|$, which provides in this simpler setting a first integrability result (cf. Proposition~\ref{prop:discrete_integrability}). Otherwise, we may encounter counterexamples (see Example~\ref{ex:counterexamples}).

We assume compactly supported measures, i.e.  there exists $M>0$, such that 
\begin{align}
\label{eq:compact_supports}
\tag{$\mathcal{C}_{\rm pt}$}
        \spt\mu_i\subset B_{\frac M 2}, \quad \text{for every} \ i=1,\dots,N 
        \, .
    \end{align}

For simplicity, we denote by $\zp := \xp(\hat x_2, \dots, \hat x_N)$.
\begin{proposition}[Discrete integrability]
\label{prop:discrete_integrability}
Let $\mu_1 = f_1 \dd \LL^d$ such that \eqref{eq:compact_supports} and with $f_1 \in L^q$. 
Assume 
\begin{enumerate}
\item\label{ass:ass1_discrte_integrability} $p \geq 2$ and $\zp \notin \spt\mu_1$, or
\item\label{ass:ass2_discrte_integrability} $1<p<2$ and $\barp(x_1)\neq \hat x_i$, for every $i=1,\dots, N$, and for every $x_1\in\spt\mu_1$,
\end{enumerate} 
Then $g_p \in L^q$ and we have 
\begin{align}
\| g_p \|_{L^q} \leq \frac{C}{\lambda_1^{d q'(1-\alpha_p)}}\| f_1 \|_{L^q}  \, ,
\end{align}
where $C \in \RR_+$ depends on $M$, $p$, $d$, and - respectively - on
\begin{enumerate}
\item $\dist\Big(\barp(\spt\mu_1), \zp\Big) >0$ , or $\qquad$ (2) $\displaystyle \min_{i=1,\dots,N}$ $\dist\Big(\barp(\spt\mu_1), \hat x_i\Big) >0$ \, .
\end{enumerate}
\end{proposition}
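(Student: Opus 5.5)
In this concentrated setting $\barp = b = b_{\hat x}$, which by Corollary~\ref{cor:barp_gleich_S1} and Remark~\ref{rem:b_special_cases} is a $C^1$ injective map with the explicit inverse \eqref{eq:inverse_b}. The plan is to write $g_p$ through the change of variables along $b^{-1}$ and bound $\det\nabla b^{-1}$ from above on $\spt\nup = b(\spt\mu_1)$. Since $\nup = b_\sharp\mu_1$ and $b^{-1}$ is $C^1$ away from its singular set, for $z\in b(\spt\mu_1)$ we have
\begin{align}
    g_p(z) = f_1(b^{-1}(z))\, \det \nabla b^{-1}(z) .
\end{align}
Here $\det\nabla b^{-1}>0$ because $\nabla b^{-1}\geq {\rm Id}$ by \eqref{eq:nonsharp_lb} and \eqref{eq:sharp_behavior_nonsing_p<2}. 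Changing variables $z=b(x_1)$ then gives
\begin{align}
    \|g_p\|_{L^q}^q = \int_{b(\spt\mu_1)} f_1(b^{-1}(z))^q (\det\nabla b^{-1}(z))^{q}\,\dd z = \int_{\spt\mu_1} f_1(x_1)^q\,(\det \nabla b^{-1}(b(x_1)))^{q-1}\,\dd x_1 .
\end{align}
So it suffices to show $\sup_{z\in b(\spt\mu_1)}\det\nabla b^{-1}(z)\leq C\lambda_1^{-d(1-\alpha_p)}$. Taking $q$-th roots then yields the constant $C^{(q-1)/q}\lambda_1^{-d(1-\alpha_p)(q-1)/q}$. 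I would aim for this bound, which is consistent with the stated one (the paper's exponent $q'$ is at least as large, and $\lambda_1\le 1$), up to adjusting the constant.

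\textbf{Case $p\geq 2$.} I would use the upper bound in \eqref{eq:double_bound_nabla_b} together with \eqref{eq:final_lb}, keeping the $\lambda_1^{-(1-\alpha_p)}$ factor explicit. For $z\in b(\spt\mu_1)$, every point involved lies in $B_{M/2}$ by \eqref{eq:compact_supports} and convexity (barycenters lie in the convex hull). Hence $A(z)\leq (1-\lambda_1)M^{p-2}$ and $M(z)\leq M$. Moreover $|z-\zp|\geq \delta:=\dist(b(\spt\mu_1),\zp)$, which is positive: $b$ has the unique fixed point $\zp$, so $\zp\notin\spt\mu_1$ means $\zp\notin b(\spt\mu_1)$, and $b(\spt\mu_1)$ is compact. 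Then each eigenvalue of $\nabla b^{-1}(z)$ is at most
\begin{align}
    1+ C_p \lambda_1^{-(1-\alpha_p)}(M/\delta)^{p-2}\leq \lambda_1^{-(1-\alpha_p)}\big(1+C_p(M/\delta)^{p-2}\big),
\end{align}
using $\lambda_1\leq 1$. Taking the determinant gives the bound with the factor $\lambda_1^{-d(1-\alpha_p)}$.

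\textbf{Case $1<p<2$.} I would use the upper bound in \eqref{eq:sharp_behavior_nonsing_p<2}. There $M(z)\leq M$ and $m(z)\geq \delta' := \min_i\dist(b(\spt\mu_1),\hat x_i)>0$. Since $1-\alpha_p = 1+\beta_p$, we get $\nabla b^{-1}\leq \lambda_1^{-(1-\alpha_p)}\big(1+(1+\beta_p)(M/\delta')^{2-p}\big){\rm Id}$, and the determinant bound follows as before. In this case $b^{-1}$ is $C^1$ on a neighbourhood of $b(\spt\mu_1)$, which avoids the $\hat x_i$, so the change of variables is legitimate.

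\textbf{Main obstacle.} The delicate step is justifying the change-of-variables identity, since $b$ may fail to be a diffeomorphism globally. The remedy is to restrict to an open neighbourhood of $b(\spt\mu_1)$ that stays away from the singular points ($\zp$, respectively the $\hat x_i$). On that neighbourhood $b^{-1}$ is a $C^1$ injective map with nonvanishing Jacobian, so the classical area formula applies.
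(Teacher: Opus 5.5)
Your proposal is correct and follows essentially the same route as the paper: a change of variables through the injective map $\barp=b$ reduces the claim to $\|g_p\|_{L^q}^q=\int|f_1|^q\,(J_{b^{-1}}\circ b)^{q-1}\,\dd x_1$, and $J_{b^{-1}}$ is then bounded uniformly on $b(\spt\mu_1)$ via \eqref{eq:nonsharp_lb} (resp.\ \eqref{eq:sharp_behavior_nonsing_p<2}), using $M(z)\le M$ and the positive distance to $\zp$ (resp.\ to the $\hat x_i$). Two points are slightly more careful than the paper's write-up: your $\lambda_1$-bookkeeping gives the factor $\lambda_1^{-d(1-\alpha_p)/q'}$, which is what the paper's computation actually yields and which implies the stated bound since $\lambda_1\le 1$; and you localize the change of variables to a neighbourhood of $b(\spt\mu_1)$ that avoids the singular points.
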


\begin{proof}
  
 Using the global injectivity of $\barp$ and a change of variables formula, we find that 
 \begin{align}
 \label{eq:formula_gp}
     g_p
        =
    (f_1 \circ \barp^{-1})
        J_{\barp^{-1}}
            \, , \qquad \text{where} \quad 
        J_{\barp^{-1}}(z) = |\det \nabla \barp^{-1}(z)|  
            = 
        \big( J_{\barp}(\barp^{-1}(z)) \big)^{-1}
            \, ,
 \end{align}
 for every $z \in \RR^d$.
 We employ this formula to compute the $L^q$-norm of the barycenter $\nu_p$ in this discrete setting: by applying a change of variables formula once again, we find that
 \begin{align}
 \label{eq:Lq-estimate-1}
     \| g_p \|_{L^q(\RR^d)}^q
        =
    \int 
        \big|
            (f_1 \circ \barp^{-1})(z)
        \big|^q 
            \cdot 
        J_{\barp^{-1}}(z)^q
            \dd z
        &=
    \int 
        |f_1(x_1)|^q 
            \cdot 
            (J_{\barp^{-1}} \circ \barp)(x_1)
            ^{q-1}
            \dd x_1    
                \, .
 \end{align}
An application of Proposition~\ref{prop:pbary}  (in particular \eqref{eq:nonsharp_lb}) and the fact that $M \geq M(z) \geq |z-\xp(\hat x)|$ for every $z \in \barp(\spt\mu_1)$ (by \eqref{eq:compact_supports}), ensure that
\begin{align}
\label{eq:asymptotics_Jacobian}
    J_{\barp^{-1}} \circ \barp(\cdot)
   &
   \lesssim 
    M^{d(p-2)}\lambda_1^{d(\alpha_p-1)}
      |\barp(\cdot)-\zp|^{d(2-p)}
                    \, .
\end{align}
As $\barp$ is injective and $\barp(\zp) = \zp$, from $\zp \notin \spt\mu_1$ we ensure that ${\rm dist}{(\barp(\spt\mu_1), \zp)>0}$. Therefore we conclude 
\begin{align}
    \| g_p \|_{L^q}^q 
        &\lesssim 
    \frac{C}{\lambda_1^{d(q-1)(1-\alpha_p)}}
    \| f_1 \|_{L^q}^q
        \text{dist}
            \Big(
                \barp(\spt\mu_1), \zp
            \Big)^{-d(q-1)(p-2)}
            \, ,
\end{align}
where $C=C(p,d,M) \in \RR_+$, as claimed.

A very similar argument applies for the case of $1<p<2$, and by Proposition~\ref{prop:pbary_p<2} we get
\begin{align}
\label{eq:asymptotics_Jacobian_p<2}
    J_{\barp^{-1}} \circ \barp(\cdot)
   \lesssim 
    M^{d(2-p)}
    \lambda_1^{d(\alpha_p-1)}
      (m \circ \barp)(\cdot)^{d(2-p)}
                    \, ,
\end{align}
where we recall $m(z) := \min_{i=1,\dots,N}|z-\hat x_i|^{d(2-p)}$.
We thus obtain 
\begin{align}
    \| g_p \|_{L^q}^q 
 \lesssim 
    \frac{C}{\lambda_1^{d(q-1)(1-\alpha_p)}}
    \| f_1 \|_{L^q}^q
        \min_{i=1,\dots, N}\dist\Big(\barp(\spt\mu_1), \hat x_i\Big)^{-d(q-1)(2-p)}\, ,
\end{align}
as claimed.
\end{proof}

\begin{remark}\label{rem:reg_geodesics_and_W2}
We remark that 
\begin{itemize}
    \item When $N=2$, thanks to \eqref{eq:barp_sharp_nu_1} and \eqref{eq:euclidian_geodesics}, one easily infer that \begin{align}\label{eq:Wp_geodesics}
\nup\coloneq\Wbar((\mu_1,(1-t)),(\mu_2,t)))=\left((1-t(p))\mathrm{Id}+t(p)T_2\right)_\sharp\mu_1
    \, ,  
\end{align}
where $t(p)\coloneq t^{\frac{1}{p-1}}\big( t^{\frac{1}{p-1}}+(1-t)^{\frac{1}{p-1}} \big)^{-1}$ and $T_2$, given by Theorem~\ref{th:sparsityplan}, is also the optimal map for $W_p(\mu_1,\mu_2)$. Thus, as expected, the $p$-Wasserstein barycenter with weights $(1-t),t$ is the $p$-Wasserstein geodesic parametrized by $t(p)$. In the case of Proposition~\ref{prop:discrete_integrability}, with $\mu_2=\delta_{\hat x_2}$, as highlighted in Remark~\ref{rem:b_special_cases}, one has $\nabla\barp(x_1)=(1-t(p))$, hence by the change of variable formula  
\begin{align}
  \| g_p \|_{L^q}^q \le (1-t(p)) ^{d(1-q)} ||f_p||_{L^q} \, ,
\end{align}
regardless of any assumptions on $\spt \mu_1$, consistently with \cite[Lemma 4.22]{San15}. 
\item When $p=2$, thanks to \eqref{eq:barp_sharp_nu_1} and to Remark~\ref{rem:b_special_cases},  from the change of variable formula one directly infers that 
\begin{align}
  \| g_p \|_{L^q}^q \le \lambda_1 ^{d(1-q)} ||f_p||_{L^q} \, ,    
\end{align}
and without extra assumptions, such as 
$\text{dist}
    \Big(
        \barp(\spt\mu_1), \zp
    \Big) >0
$, consistently with the results in \cite[Theorem 5.11]{AC11}.
\end{itemize}
\end{remark}

\begin{example}[Counterxamples to integrability]
\label{ex:counterexamples}
 The following examples show that, when $p\neq 2$,  $N>2$,  and $\hat x \notin {\rm diag}(\RR^{d(N-1)})$), assumptions \eqref{ass:ass1_discrte_integrability} and \eqref{ass:ass2_discrte_integrability} in Proposition~\ref{prop:discrete_integrability} can not be omitted in general. 
Let $B \subset \RR^d$ be an open set of $\LL^d$-measure $1$ and consider $\mu_1 := f_1 \LL^d$ where $f_1 := \mathds{1}_B$.
We consider two cases of interest.

\bigskip 
\noindent 
(\textit{Counterexamples for $p > 2$)}. \ 
For superquadratic costs, we assume that $\zp \in B$. Then we claim that the associated Wasserstein barycenter $\nu_p = g_p \LL^d$ is such that 
    \begin{align}
    \label{eq:discrete_non_integrability1}
        g_p \in L^q 
            \Leftrightarrow 
        q < \frac1{\alpha_p} = \frac{p-1}{p-2}
            \, .
    \end{align}
Note that this provides a counterexample to the $q$-integrability for $p>2$ as soon as $q \geq \alpha_p^{-1}$.

In order to show \eqref{eq:discrete_non_integrability1}, we apply Proposition~\ref{prop:pbary}, and in particular \eqref{eq:sharp_behavior_nonsing}, obtaining that
\begin{align}\label{eq:change_of_var_uniform_density}
    \int
        |g_p(z)|^q 
            \dd z 
        =
    \int_{B} 
        \big| 
            (J_{\barp^{-1}} \circ \barp)(x_1)
        \big|^{q-1}
            \dd x_1  
        &=
    \int_{\barp(B)}
        \big| 
            J_{\barp^{-1}}(z)
        \big|^q
            \dd z
\\
        &\simeq
    \int_{\barp(B)}
        \frac
            1
            {|z - \zp|^{dq\alpha_p}}
                \dd z
        \, . 
\end{align}
The claimed conclusion then follows from the fact that $\barp$ is an invertible open (it has a continuous inverse) map and the fact that 
\begin{align}
    \int_{B_1(0)}
        \frac
            1
            {|z|^{dq\alpha_p}}
                \dd z
            < \infty 
        \quad \Leftrightarrow \quad 
            d q \alpha_p < d 
        \quad \Leftrightarrow \quad 
            q < \frac1{\alpha_p}
                \, .
\end{align}

\bigskip 
\noindent 
(\textit{Counterexamples for $1<p<2$}). \ 
Assume in this case
that $\hat x_i \in \barp(B)$, for some $i=1,\dots,N$. Then we have that
    \begin{align}
    \label{eq:discrete_non_integrability2}
        g_p \in L^q 
            \Leftrightarrow 
        q < \frac1{2-p}
            \, .
    \end{align}
Indeed, the estimate \eqref{eq:sharp_behavior_nonsing_p<2}, provided by Proposition~\ref{prop:pbary_p<2}, shows that 
     $
         J_{\barp^{-1}}(z) \simeq | z - \hat x_i |^{d(p-2)}
     $
      as $z \to \hat x_i$.
\end{example}

\subsection{$L^q$-estimates for general measures with distant support}
In this section we prove Theorem \ref{thm:integrability_distant_intro}. In particular, 
by using a discretization approach, we show that Proposition~\ref{prop:discrete_integrability} can be extended to general measures $\mu_2, \dots , \mu_N$. For $q \in \NN$, we denote by $q'\in \NN$ its conjugate. As conclusions and assumptions are different depending on whether $p<2$ or $p>2$, we divide the proof into these two cases.

\subsubsection{Integrability for $p\geq 2$}\label{subsubsect:integrability_p>2}
Recall that we assume compactly supported measures \eqref{eq:compact_supports} and work with the assumption
 \begin{align}
 \label{eq:D>0}
 \tag{$hp_1$}
    \mathcal{D}
      :=
    \inf_{(x_1, \dots, x_N) \in \bigtimes_{i=1}^N \spt\mu_i}
    \big| 
        \xp(x_1, x_2, \dots , x_N) - \xp(x_2, \dots, x_N) 
    \big| >0
        \, .
 \end{align}
Note that, when the measures are  compactly supported, \eqref{eq:D>0_intro} is equivalent to
\begin{align}
\label{eq:distant_supports}
\tag{$hp_1^*$}
   \text{dist} 
   \bigg(
        \spt\mu_1 
            , 
        \xp
        \Big(   
            \bigtimes_{i=2}^N \spt\mu_i
        \Big)
   \bigg) >0
    \, . 
\end{align}
 Indeed, by contradiction: assume  $\mathcal{D}=0$, which, by compactness and continuity of $
\xp$, means that one can find  points $x_1, \dots, x_N$ with $x_i \in \spt \mu_i$ such that 
\begin{align}
    \xp(x_1, x_2, \dots , x_N) 
        = 
    \xp(x_2, \dots, x_N)
        = 
    \xp\big( \xp(x_2, \dots, x_N), x_2, \dots , x_N \big) 
        \, . 
\end{align}
As the barycenter map $x_1 \mapsto \xp(x_1, x_2, \dots, x_N)$ is injective  for every $x_2, \dots, x_N$, we conclude that 
\begin{align}
    x_1 = \xp(x_2, \dots, x_N)
        \, , \qquad 
    \text{with } 
        x_i \in \spt\mu_i 
            \, ,
\end{align}
which clearly contradicts the assumption \eqref{eq:distant_supports}.  {\it Viceversa}, if there exist $\xdots$ such that $x_1=\xp(x_2, \dots, x_N)$, it must hold $\xp(x_1,x_2, \dots, x_N)=\xp(x_2, \dots, x_N)$, contradicting \eqref{eq:D>0}.
\begin{proof}[Proof of Theorem \ref{thm:integrability_distant_intro} $(p \geq 2)$]
    We proceed in two steps: first, we extend the validity of Proposition~\ref{prop:discrete_integrability} to the case when $\mu_2, \dots, \mu_N$ are \textit{empirical measures}. Secondly, we will make use of this result and provide an approximation argument with respect to which the sought inequality and our assumption \eqref{eq:D>0} are stable.

\smallskip 
\noindent
    \underline{Step 1}. \ 
Assume there exists atoms $\{x_j^i\}_{i,j} \subset \RR^d$ and masses $\{ m_j^i \}_{i,j} \subset \RR_+$ so that 
\begin{align}
\label{eq:empirical}
    \mu_i 
        =
    \sum_{j=1}^{K_i} 
        m_j^i   \delta_{x_j^i}
        , \qquad 
    \text{for }i = 2, \dots, N \, , \quad  j = 1 , \dots , K_i \in \mathbb{N}
        \, .
\end{align}
As $\mu_1 \ll \LL^d$, we have $\nu_p = g_p \LL^d = (\barp)_{\#}(\mu_1)$ with $\barp(x_1) = \xp(x_1, T_2(x_1), \dots, T_N(x_1))$. As the measures are empirical, for every $i=2, \dots, N$ the map
$
    T_i = R_i \circ S_1 
$
is piecewise constant and assumes precisely the value $\{ x_1^i , \dots, x_{K_i}^i \} \subset \RR^d$. 

We divide the support of $\mu_1$ in $\prod_{i=1}^N K_i$ sets, using the maps $T_2, \dots, T_N$, as follows: for each choice of multi-index $\bj= (j_2, \dots, j_N) \in \cJ$ with $j_i \in \{1, \dots, K_i\}$, we define 
\begin{align}
    \Omega_{\bj}
        :=
    \Big\{
        x \in \RR^d \cap \spt \mu_1 
            \suchthat  
        T_i(x_1) = x_{j_i} 
    \Big\}
        \, .
\end{align}
Note that such sets are, by construction, disjoint: $\Omega_{\bj} \cap \Omega_{\bk} = \emptyset$ if $\bj \neq \bk$. Consider now the image of such sets via $\barp$ and set $\Omega_{\bj}^p:= \barp(\Omega_{\bj})$ for $j \in \cJ$. As $\barp$ is injective (Corollary~\ref{cor:barp_gleich_S1}), we conclude that the new sets are also disjoint, and all in all, we have 
\begin{align}
\label{eq:partition_image_disjoint}
    \Omega_{\bj}^p \cap \Omega_{\bk}^p = \emptyset 
        \quad 
    \text{if } \, \bj \neq \bk
        \qquad \text{and} \qquad 
    \nu_p 
    \Big( 
        \bigcup_{\bj \in \cJ}
        \Omega_{\bj}^p 
    \Big)
        =
        \nu_p 
    \big( 
        \barp(\spt \mu_1)
    \big)
        =
    1
        \, .
\end{align}
Moreover, for every given multi-index $\bj \in \cJ$, we have that 
\begin{align}
\label{eq:restriction_lip}
    \barp\big|_{\Omega\bj} 
        = 
    b_\bj
        \qquad \text{where} \quad 
    b_\bj(x_1) 
        := 
    \xp(x_1, x_{j_2}, \dots, x_{j_N})
        \, , \quad x_1 \in \RR^d 
            \, .
\end{align}
In particular, it is the restriction of a locally Lipschitz function in $\RR^d$ (cfr. Proposition~\ref{prop:diff_xp}). 
As a consequence, using once again that $\barp$ is injective (cfr. Corollary~\ref{cor:barp_gleich_S1}), we conclude that
\begin{align}
    \nu_p\big|_{\Omega_\bj^p}
        \stackrel{\text{inj}}{=}
   (\barp)_{\#}
   \Big(
        \mu_1\big|_{\Omega_\bj}
   \Big)
        =
    (b_\bj)_{\#} 
   \Big(
        \mu_1\big|_{\Omega_\bj}
   \Big)
        =
    (b_\bj)_{\#} \mu_1 \big|_{\Omega_\bj^p}
        \, , \qquad
    \forall \bj \in \cJ 
        \, .
\end{align}
From the above equality, using the change of variable formula \cite{Ambrosio-Fusco-Pallara:2000} for locally Lipschitz map $b_\bj$, we deduce that, locally on $\Omega_\bj$, the density $g_p$ of $\nu_p$ can be written as 
\begin{align}
\label{eq:formula_gp_cov}
    (g_p \circ b_\bj)  J_{b_\bj} 
        =
    f_1 
        \qquad 
    \text{on } \Omega_\bj
        \qquad \Longleftrightarrow \qquad 
    g_p 
        =
    \big( f_1 \circ b_\bj^{-1}  \big)
        J_{b_\bj^{-1}} 
        \qquad 
    \text{on } \Omega_\bj^p
        \, .
\end{align}
Here $J_{b_\bj}$ denotes the Jacobian of $b_\bj$, i.e. 
\begin{align}
\label{eq:jac_inverse}
    J_{b_\bj}(x) := | \det \nabla b_\bj (x) | 
        = 
    J_{b_\bj^{-1}}(b_\bj(x))^{-1}
        \, , \qquad 
    x \in \RR^d 
        \, .
\end{align}
Therefore, by \eqref{eq:formula_gp_cov} and using that $\{ \Omega_\bj^p\}_\bj$ are disjoint, we can compute the norm
\begin{align}
\label{eq:estimate_Lq_general}
    \| g_p \|_{L^q}^q
        \stackrel{\text{disj}}{=}
    \sum_{\bj \in \cJ}
        \int_{\Omega_\bj^p}
            |g_p(z)|^q \dd z 
        &\stackrel{\eqref{eq:formula_gp_cov}}{=} 
    \sum_{\bj \in \cJ}
        \int_{\Omega_\bj^p}
            \big|
            \big( f_1 \circ b_\bj^{-1}  \big) (z)
        J_{b_\bj^{-1}}(z) \big|^q \dd z
\\
        &\stackrel{\text{c.o.v.}}{=} 
    \sum_{\bj \in \cJ}
        \int_{\Omega_\bj}
            | f_1(x_1) |^q
            (J_{b_\bj^{-1}}\circ b_\bj)(x_1)^{1-q}\dd x_1 
                \, ,
\end{align}

From now on, we simply argue in the same way we did for Proposition~\ref{prop:discrete_integrability}.
By a direct application of Proposition~\ref{prop:pbary} (in particular \eqref{eq:nonsharp_lb}), \eqref{eq:jac_inverse}, and \eqref{eq:D>0},
  we conclude that  for $x_1 \in \Omega_\bj$,
  \begin{align}
  \label{eq:lb_Jb_uniform}
      J_{b_\bj^{-1}} \circ b_\bj(\cdot)
   &
   \leq 
    C\lambda_1^{d(\alpha_p-1)}
      |b_\bj(\cdot)-\zp|^{d(2-p)}
      \leq 
    C\lambda_1^{d(\alpha_p-1)}
      \mathcal{D}^{d(2-p)}
  \end{align}
for some $C\in \RR_+$ (depending on $M$, given in \eqref{eq:compact_supports}), where $\mathcal{D}>0$ is the one given by \eqref{eq:D>0}.
By inserting this lower bound into \eqref{eq:estimate_Lq_general} we conclude that 
\begin{align}
    \| g_p \|_{L^q}^q
       \leq 
    \frac{C^{q-1}}{\big( \lambda^{ d(1-\alpha_p)}\mathcal{D}^{d(p-2)}\big)^{q-1}}
    \sum_{\bj \in \cJ}
        \int_{\Omega_\bj}
            | f_1(x_1) |^q
            \dd x_1 
    \stackrel{\text{disj}}{=}
      \frac{C^{q-1}}{\big( \lambda^{ d(1-\alpha_p)}\mathcal{D}^{d(p-2)}\big)^{q-1}} 
    \| f_1 \|_{L^q}^q 
                \, ,  
\end{align}
which provides the claimed integrability. 

\smallskip 
\noindent
    \underline{Step 2}. \ 
     We proceed by approximation. Let $\mu_1, \dots, \mu_N$ be measures satisfying \eqref{eq:D>0}. We construct $N-1$ sequences of measures $\mu_2^n , \dots, \mu_N^n$ so that, for every $i=2,\dots, N$,  
     \begin{enumerate}
         \item For every $n \in \NN$, $\mu_i^n$ is an empirical measure of the form \eqref{eq:empirical}.
         \item For every $n \in \NN$, we have $\spt \mu_i^n \subset \spt \mu_i$. 
         \item We have  $\mu_i^n \to \mu_i$ narrowly (i.e. in duality with $C_b(\RR^d)$). 
     \end{enumerate}
     The existence of such a construction is standard and therefore omitted. 

     Observe that by construction, we hence have 
     \begin{align}
          \bigtimes_{i=1}^N \spt\mu_i^n 
            \subset 
         \bigtimes_{i=1}^N \spt\mu_i
            \quad \Longrightarrow \quad 
        \mathcal{D}_n:= \mathcal{D}(\mu_1, \mu_2^n, \dots, \mu_N^n) 
            \geq 
         \mathcal{D}(\mu_1, \mu_2, \dots, \mu_N) = \mathcal{D}> 0
            \, ,
     \end{align}
     for all $n \in \NN$. 
     Denoted by $\nu_p^n = g_p^n \LL^d$ the corresponding $W_p$-barycenter of the measures $\mu_1, \mu_2^n, \dots, \mu_N^n$, we can therefore apply the reasoning from Step 1 and obtain
     \begin{align}
     \label{eq:lsc}
         \| g_p^n \|_{L^q}
            \leq 
         \frac
            {C}
            {\big( \lambda^{ d(1-\alpha_p)}\mathcal{D}_n^{d(p-2)}\big)^{q'}}
                \| f_1 \|_{L^q} 
            \leq
         \frac
            {C}
            {\big( \lambda^{ d(1-\alpha_p)}\mathcal{D}^{d(p-2)}\big)^{q'}}
                \| f_1 \|_{L^q} 
                    \, , \quad 
            \forall n \in \NN 
                \, .
     \end{align}
     The proof will thus be complete if we are able to show that  
     \begin{align}
         \liminf_{n\to \infty}
            \| g_p^n \|_{L^q} \geq \| g_p \|_{L^q} 
                \, .
     \end{align}
 We claim that $\nu_p^n \to \nu_p$ narrowly as $n \to \infty$. Indeed, by a standard stability result of optimal transport, see for instance \cite[Theorem 6.13]{Fri24} or \cite[Theorem 6.8]{Ambrosio-Brue-Semola:2024}, if $\gammap^n$ and $\gammap$ are respectively the optimal plans for \eqref{MMpbary}  with marginals $\mu_1^n,\dots,\mu_N^n$ and $\mu_1,\dots,\mu_N$, $\gammap^n\to \gammap$. We conclude by observing that  $\nu_p^n$ and $\nup$ are the push-forward with respect to the continuous map $\xp$ of, respectively, $\gammap^n$ and $\gammap$. The lower-semicontinuity property \eqref{eq:lsc} of the $L^q$-norm with respect to the narrowly convergence of measures then follows by standard arguments.  Indeed, without loss of generality, we can assume $\sup_{n \in \NN} \| g_p^n \|_{L^q}<\infty$, if not there is nothing to prove. In this case, as $L^q$ is reflexive for $q \in (1,+\infty)$, up to a non-relabeled subsequence, there must exist $g \in L^q$ so that $g_p^n \rightharpoonup g$ weakly in $L^q$, or in other words 
    \begin{align}
        \lim_{n \to \infty} 
        \int g_p^n \varphi \dd x 
            =
        \int g \varphi \dd x 
            \,  ,\qquad 
        \forall \varphi \in L^{q'}
            \, .
    \end{align}
    As we already know that $\nu_p^n \to \nu_p$ narrowly, we conclude that 
    \begin{align}
        \int g \varphi \dd x 
            =
        \int g_p \varphi \dd x 
            \, , \qquad 
        \forall \varphi \in L^{q'} \cap C_b(\RR^d)
            \, .
    \end{align}
    As 
    $ L^{q'}(\RR^d) \cap C_b(\RR^d) \hookrightarrow L^{q'}(\RR^d)$ densely, this implies $g=g_p$. Finally, \eqref{eq:lsc} follows by the standard lower-semicontinuity of the $L^q$-norm with respect to the weak convergence. 
\end{proof}

\subsubsection{Integrability for $p\in(1,2)$}\label{subsubsect:integrability_p<2}
By means of the very same approach, one can show that a result of the same type as in  Proposition~\ref{prop:discrete_integrability} can be obtained when $p \in (1,2)$ under suitable assumptions, which take care of the different type of singularities, as already illustrated in Proposition~\ref{prop:pbary_p<2}.
Recall that in this regime we work with once again with compactly supported measures \eqref{eq:compact_supports} as well as the slightly stornger geometric assumption 
 \begin{align}
 \label{eq:ass on supports}\tag{$hp_2$}
    m
        :=
    \inf_{i \in \{1, \dots, N\}}
         \inf_{(x_1, \dots, x_N) \in \bigtimes_{i=1}^N \spt\mu_i}
    \big| 
        x_i 
            -
        \xp(x_1, x_2, \dots , x_N)
    \big| >0
        \, .
 \end{align} 
%
As usual, for $q \in \NN$, we denote by $q'\in \NN$ its conjugate.

\begin{proof}[Proof of Theorem \ref{thm:integrability_distant_intro} $(p \in (1,2))$]
We just sketch the steps of the proof as it contains no substantially new ideas to the case with $p \geq 2$. The approximation argument used for the case $p>2$ works practically unchanged, with the only difference consisting in the lower bound in \eqref{eq:lb_Jb_uniform}, which, for $p \in (1,2)$, as a consequence of Proposition~\ref{prop:pbary_p<2}, is replaced by
 \begin{align}
 \label{eq:lb_Jb_uniform_p<2}
      J_{b_\bj^{-1}} \circ b_\bj(\cdot)
   &
   \leq 
    C\lambda_1^{d(1+\beta_p)}
      (m_j \circ b_\bj)(\cdot)^{d(2-p)}
      \leq 
    C\lambda_1^{d(1+\beta_p)}
      m^{d(2-p)}
  \end{align}
where $m_j(z) := \min_{i \geq 2} |x_i^j-z|$ satisfies $(m_j \circ b_\bj)(x_1) \geq m$ for every $\bj$ and $x_1 \in \spt\mu_1$ by construction. The rest of the proof works verbatim. 
\end{proof}

\section{A general estimate on the $L^q$-norm of $p$-Wasserstein barycenters}
\label{sec:general_estimate}The goal of this section is to prove the general estimate on the norm of the density of the $p$-Wasserstein stated in Propositon~\ref{prop:general_Lq_estimate_intro}. This holds without any assumption on the supports and it highlights possible sources of nonintegrability. 
This provides (cf. Remark~\ref{rem:alternative_proof_integrability}) an alternative proof of the integrability result given by Theorem~\ref{thm:integrability_distant_intro}, under the slightly stronger assumption that \eqref{eq:ass on supports_intro} holds for every $1<p<\infty$.

First, for every $S\subset\{1,\dots, N\}$, let us consider the definition of the sets $D_S$ and $D^1_S$, given, respectively, by \eqref{eq:defDS_intro} and \eqref{eq:D1S_intro}. Notice that if  
$\gamma_p = ({\rm Id}, T_2, \dots, T_N)_{\#} \mu_1 \in \Prob(\RR^{dN})$ is the optimal coupling for the problem \eqref{MMpbary}, then, 
\begin{align}\label{eq:D1S}
    D^1_{S}
        &:=
    \pi_1(D_S\cap\spt\gamma_p)
\\
        &=
    \Big\{
        x_1\in\spt\mu_1 \, : \, T_i(x_1)=\barp(x_1) \, \text{ if } i\in S, \, \text{ and } \, T_i(x_1) \neq \barp(x_1)  \, \text{ if }  i\not\in S
    \Big\}
        \, ,
\end{align}
where $\barp$ is given by \eqref{eq:barp}.
The first result of this section concerns the differentiability properties of $\barp$.
\begin{proposition}[Differentiability of $\barp$]\label{prop:diff barp}
For $p>2$, the map $\barp$ is differentiable $\LL^d$-a.e. on the set $\bigcup\limits_{S\notin\mathcal{F}_1}D^1_S$. If $1<p\le 2$,   $\barp$ is differentiable $\LL^d$-a.e. on $\spt\mu_1$. 
\end{proposition}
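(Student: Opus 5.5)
The plan is to use Corollary~\ref{cor:barp_gleich_S1} together with the explicit form \eqref{eq:mapS} of $S_1$:
\[
  S_1 = \id - (p\lambda_1)^{-\frac{1}{p-1}}\, \Phi_p(\nabla\widetilde\varphi_1),
  \qquad \Phi_p(w) := |w|^{-\alpha_p} w .
\]
The differentiability of $\barp$ then reduces to two ingredients: the second-order differentiability (in the Alexandroff sense) of the semiconcave potential $\widetilde\varphi_1$, given by Theorem~\ref{th:duality}, and the regularity of $\Phi_p$.

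The first step is to upgrade the a.e.\ identity $\barp=S_1$ to a pointwise one, phrased through superdifferentials. For every $x_1\in\spt\mu_1$ the pair $(x_1,\barp(x_1))$ lies in $\spt\gamma_1$, with $\gamma_1=(\pi_1,\xp)_\sharp\gammap$. By Corollary~\ref{cor: coupled two marg pot}, the function
\[
  y\mapsto \lambda_1|y-\barp(x_1)|^p-\widetilde\varphi_1(y)
\]
is minimized at $y=x_1$. Hence $w_{x_1}:=p\lambda_1|x_1-\barp(x_1)|^{p-2}(x_1-\barp(x_1))$ belongs to the superdifferential $\partial^+\widetilde\varphi_1(x_1)$. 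Inverting this relation gives, for every $x_1$ (not only a.e.),
\[
  \barp(x_1)=x_1-(p\lambda_1)^{-\frac1{p-1}}\,\Phi_p(w_{x_1}).
\]
This bypasses any ambiguity about the representative of $\barp$.

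Next, fix a point $x_1$ where $\widetilde\varphi_1$ is twice differentiable in the Alexandroff sense; such points have full measure by Theorem~\ref{th:duality}. A standard property of semiconcave functions is that at such a point
\[
  \sup_{w\in\partial^+\widetilde\varphi_1(y)}\big|w-\nabla\widetilde\varphi_1(x_1)-\nabla^2\widetilde\varphi_1(x_1)(y-x_1)\big| = o(|y-x_1|).
\]
Applying this with $w=w_y$ shows that $y\mapsto w_y$ is differentiable at $x_1$, with derivative $\nabla^2\widetilde\varphi_1(x_1)$. By the representation above, $\barp$ is differentiable at $x_1$ as soon as $\Phi_p$ is differentiable at $w_{x_1}=\nabla\widetilde\varphi_1(x_1)$.

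It remains to check when $\Phi_p$ is differentiable, and here the two regimes differ.
\begin{itemize}
\item For $1<p\le2$ we have $\Phi_p(w)=|w|^{\beta_p}w$ with $\beta_p=-\alpha_p\ge0$. For $\beta_p=0$ this is the identity. For $\beta_p>0$ it is $C^1$ on $\RR^d$, with derivative $0$ at the origin; this follows from \eqref{eq:gradient_cmp} and $|\Phi_p(w)|=o(|w|)$. Hence $\barp$ is differentiable a.e.\ on $\spt\mu_1$.
\item For $p>2$ we have $\alpha_p\in(0,1)$, so $\Phi_p$ is smooth away from $0$ but not differentiable at $0$. Moreover $w_{x_1}=0$ if and only if $\barp(x_1)=x_1=T_1(x_1)$, that is, exactly when $x_1\in D^1_S$ for some $S\in\mathcal F_1$. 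So on $\bigcup_{S\notin\mathcal F_1}D^1_S$ we have $w_{x_1}\neq0$, and the chain rule gives differentiability of $\barp$ at every Alexandroff point there.
\end{itemize}

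The main obstacle is the step that makes the chain rule legitimate without assuming that the identity $\barp=S_1$ holds on a whole neighbourhood. The sets $D^1_S$ are not open, and the formula $S_1=\id-\dots$ only holds where $\widetilde\varphi_1$ is differentiable. The superdifferential argument above is meant to handle exactly this issue. The property I rely on is the uniform first-order approximation of the whole superdifferential at Alexandroff points, which holds for locally semiconcave functions (e.g.\ via \cite{GMC96}, Appendix~C). If it is not available in that form, I would instead derive it from the Lipschitz continuity of the superdifferential of $\widetilde\varphi_1-\frac C2|\cdot|^2$, using that the superdifferential of a concave function is monotone.
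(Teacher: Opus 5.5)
Your proof is correct and follows essentially the same route as the paper: identify $\barp$ with $S_1=\id-(p\lambda_1)^{-1/(p-1)}\Phi_p(\nabla\widetilde\varphi_1)$, use the Alexandroff second differentiability of the semiconcave potential $\widetilde\varphi_1$, and apply the chain rule. For $p>2$ the chain rule is justified by observing that $\nabla\widetilde\varphi_1(x_1)\neq 0$ exactly off $\bigcup_{S\in\mathcal F_1}D^1_S$. Your superdifferential representation of $\barp$, combined with the uniform Alexandroff expansion of $\partial^+\widetilde\varphi_1$, is a welcome tightening of the gap between the a.e. identity $\barp=S_1$ and pointwise differentiability, which the paper passes over; note only that the representation holds on the full-measure set where $(x_1,T_2(x_1),\dots,T_N(x_1))\in\spt\gammap$, not literally for every $x_1\in\spt\mu_1$, since the $T_i$ are defined only $\mu_1$-a.e.
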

\begin{proof}
By Corollary \ref{cor:barp_gleich_S1} we know that $\barp(x_1)=S_1(x_1)$, $\LL^d$-a.e. on $\spt\mu_1$, where $S_1$ is given by \eqref{eq:mapS}.   Let $\widetilde\varphi_1$ the same as in \eqref{eq:mapS}. By Theorem \ref{th:duality} and Remark \ref{rem: differentiability_psi}, $\widetilde\varphi_1$ is twice differentiable $\LL^d$-a.e. on $\spt\mu_1$. Therefore, in order to prove the differentiability of $S_1$, it is enough to look  at the differentiability of the function $\frac{\cdot}{|\cdot|^{\alpha_p}}$. If $1<p\le2$, $\alpha_p\leq 0$ and the function is everywhere differentiable. If $p>2$, $\alpha_p\in(0,1)$ , then  $\frac{\cdot}{|\cdot|^{\alpha_p}}$ is differentiable in $\RR^d\setminus \{0\}$. Let $x_1\in\spt\mu_1$ such that $\nabla\widetilde\varphi(x_1)$ exists.
By Corollary \ref{cor: coupled two marg pot} and standard duality arguments, we have that 
\begin{align}\label{eq:gradient phi psi}
      \nabla\widetilde\varphi_1(x_1) = \nabla_{x_1}|x_i-z|^p=p|x_1-z|^{p-2}(x_1-z) \quad \text{for $\gamma_1$-a.e. } (x_i,z). 
\end{align}
 Moreover, $(x_1,z)\in \spt\gamma_1$ if and only if $z=\barp(x_1)$ (see \eqref{eq:optimality2marginal}). If $x_1\in D^1_S$, with $S\not\in \mathcal{F}_1$, then $\barp(x_1)\neq x_1$ and thus $|x_1-\barp(x_1)|=|x_1-z|>0$, and, by \eqref{eq:gradient phi psi}, $|\nabla\widetilde\varphi_1(x_1)|>0$. This implies that $S_1$ is $\LL^d$-a.e. differentiable on $\bigcup_{S\notin\in\mathcal{F}_1}D^1_S$.
\end{proof}

\begin{remark}[$D_S^1$ with $S \in \mathcal{F}_1$]
    For $p>2$, on the set $D^1_S$, with $S\in \mathcal{F}_1$, we cannot generally ensure differentiability of $\barp$. On the other end, by definition of $D^1_S$, $x_1=\barp(x_1)=S_1(x)$, meaning that $S_1={\rm Id}$ on $\bigcup_{S\in\mathcal{F}_1}D^1_S$. This shows that when \textit{restricted} to the set $\bigcup_{S\in\mathcal{F}_1}D^1_S$, $\nabla\barp$ exists, where $\nabla\barp(x)$ is identified by the standard property $\barp(y)-\barp(x)-\langle\nabla\barp(y)-\nabla\barp(x),y-x\rangle=o(|x-y|)$ as $y\to x$ in $\bigcup_{S\in\mathcal{F}_1}D^1_S$. With this definition, $\nabla\barp$ is uniquely determined whenever $\bigcup_{S\in\mathcal{F}_1}D^1_S$ has positive density at $x$. Hence, ambiguity occurs only on a $\LL^d$-negligible subset, which is not relevant while integrating. 
    However, we see from  Proposition~\ref{prop:general_Lq_estimate_intro} that  values of $\nabla\barp$ on the set $\bigcup_{S\in\mathcal{F}_1}D^1_S$ are irrelevant to the analysis. 
\end{remark}

Next, we seek injectivity estimates on $\barp$ which  provide upper bounds on the Jacobian of its inverse. 
This is a direct consequence of the injectivity estimate of Lemma~\ref{lem:estimate}, which is an \textit{ad hoc} version of  \cite[Lemma 5.2]{BFR24-h} and \cite[Lemma 2.9]{BFR24-p}.

For any point $\xvec=(\xdots)\in\RR^{dN}$, recall the definition of $H_i(\xvec)$ and $\Lambda_i(\xvec)$ given in \eqref{eq:Hi_Lambda_i}. Then, given $T_2,\dots, T_N$ optimal for the multimarginal problem \eqref{MMpbary}, we define \begin{align}\label{eq:Hi1_Lambdai1}
H^1_i(x_1):=H_i\circ ({\rm Id},T_2,\dots,T_N) \quad \text{and} \quad 
    \Lambda_{i}^1(x_1)=\Lambda_i\circ ({\rm Id},T_2,\dots,T_N)(x_1).
\end{align} 
Notice that, consistently with \eqref{eq:positivity_Hi_Lambdai},
\begin{align}
  &H^1_i(x_1)\ge 0\, (\text{and thus} \ \Lambda^1_i(\xvec)\ge 0), \quad \text{for every} \ x_1\in\RRd,\\
    &H^1_i(x_1)=0 \ (\text{and thus} \ \Lambda^1_i(x_1) =0) \iff T_i(x_1)=\barp(x_1).  
\end{align}
 \begin{corollary}[Local regularity of $\barp$]\label{cor:injectivity estimate barp}
     Let $S\subset\{1,\dots,N\}\notin\mathcal{F}_1$. Then for every  $x_1\in D^1_S \cap \spt\mu_1$, there exists $r(x_1)>0$ such that, for every $y_1,\tilde y_1\in B_{r(x_1)}(x_1) \cap D_S^1 \cap \spt\mu_1$, 
     \begin{align}\label{eq:injectivity estimate barp}
  |\barp(y_1)-\barp(\tilde y_1)|\ge \frac 12  \frac {\left(\min_{i\notin S}\Lambda^1_i(x_1) \right)}{\max_{i \notin S}|H^1_{i}(x_1)|} |y_1-\tilde y_1|.
     \end{align}     
 \end{corollary}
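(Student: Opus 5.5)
We propose to prove the corollary by combining the injectivity estimate of Lemma~\ref{lem:estimate} (the \emph{ad hoc} version of \cite[Lemma 5.2]{BFR24-h}) with the $c_p$-monotonicity of $\spt\gammap$. The point is that on $D^1_S$ with $S\notin\mathcal{F}_1$ all the ``non-collapsed'' indices $i\notin S$ include $i=1$. Hence the Hessian blocks $H_i$, $i\notin S$, are positive definite at $\xvec=({\rm Id},T_2,\dots,T_N)(x_1)$, and $\xp$ is $C^1$ near $\xvec$ by Proposition~\ref{prop:diff_xp}, since $\xvec\notin{\rm diag}(\RR^{dN})$.

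\textbf{Step 1 (local smoothness).} Fix $x_1\in D^1_S\cap\spt\mu_1$ and set $\xvec:=({\rm Id},T_2,\dots,T_N)(x_1)\in D_S\cap\spt\gammap$. Because $1\notin S$, we have $x_1\neq\xp(\xvec)$, so $H^1_1(x_1)>0$. More generally, $H_i(\xvec)$ is positive definite for every $i\notin S$: for $p\ge2$ its eigenvalues are $\lambda_i|x_i-z|^{p-2}$ and $(p-1)\lambda_i|x_i-z|^{p-2}$, and for $p<2$ they are finite and positive off the point $z=x_i$. Hence $\Lambda^1_i(x_1)>0$ for $i\notin S$. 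By continuity of $\xp$ and of the $H_k$ away from the relevant singular sets, there is a neighborhood of $\xvec$ in $D_S$ on which these quantities are comparable to their values at $\xvec$, up to a factor that can be absorbed into the constant $\frac12$.

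\textbf{Step 2 (apply the lemma).} For $y_1,\tilde y_1\in B_r(x_1)\cap D^1_S\cap\spt\mu_1$, set $\yvec=({\rm Id},T_2,\dots,T_N)(y_1)$ and $\ytildevec$ analogously. Both lie in $\spt\gammap\cap D_S$, which is $c_p$-monotone. Lemma~\ref{lem:estimate} should give, for such pairs close to $\xvec$,
\[
|\xp(\yvec)-\xp(\ytildevec)|\ \ge\ \tfrac12\,\frac{\min_{i\notin S}\Lambda_i(\xvec)}{\max_{i\notin S}|H_i(\xvec)|}\,\Big(\sum_{i\notin S}|y_i-\tilde y_i|^2\Big)^{1/2}.
\]
The right-hand side is at least the stated constant times $|y_1-\tilde y_1|$, since $1\notin S$. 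Finally, $\xp(\yvec)=\barp(y_1)$ by definition \eqref{eq:barp}, which yields the claim.

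\textbf{Main obstacle.} The difficulty is that Lemma~\ref{lem:estimate} requires $\yvec,\ytildevec$ to be close to $\xvec$ in $\RR^{dN}$, but the $T_i$ are only measurable, so $y_1$ close to $x_1$ does not obviously give $T_i(y_1)$ close to $T_i(x_1)$. Our first attempt is to argue as in \cite{BFR24-p}: the monotonicity inequality, expanded to second order around $\xvec$ in the coordinates $x_i$ with $i\notin S$, yields the estimate regardless of the distance between $T_i(y_1)$ and $T_i(x_1)$. The coordinates $i\in S$ sit at $\xp$ and contribute only degenerate terms, which have a sign for $p\ge 2$. If this fails, we fall back on Lusin's theorem: restrict to a compact subset on which the $T_i$ are continuous, which suffices for the $\LL^d$-a.e. use of the estimate in the proof of Proposition~\ref{prop:general_Lq_estimate_intro}.
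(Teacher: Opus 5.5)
\textbf{The gap.} Your Steps 1 and 2 are exactly the paper's argument. You lift $x_1$ to $\xvec=(x_1,T_2(x_1),\dots,T_N(x_1))\in D_S\cap\spt\gammap$, apply Lemma~\ref{lem:estimate} with $\Gamma=\spt\gammap$, and use $\barp(y_1)=\xp(\yvec)$ together with $1\notin S$.

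The paper then just sets $r(x_1):=r(\xvec)$ and justifies this by $B_{r(x_1)}(x_1)\subset\pi_1(B_{r(\xvec)}(\xvec))$. That inclusion is true (it is an equality) but beside the point. The lemma needs $(\id,T_2,\dots,T_N)\bigl(B_{r(x_1)}(x_1)\cap D^1_S\cap\spt\mu_1\bigr)\subset B_{r(\xvec)}(\xvec)$, i.e.\ continuity of the $T_i$ at $x_1$ along $D^1_S$. This is not available, since the $T_i$ are only measurable (for empirical $\mu_i$ they are piecewise constant). So the obstacle you single out is a genuine gap, and one that the paper's proof shares rather than one you missed.

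\textbf{Your first remedy does not work.} The inequality of Lemma~\ref{lem:estimate} comes from a second-order expansion of $c_p$ at $\xvec$ in all $N$ variables. It gives nothing when some $T_i(y_1)$ lies at distance of order one from $T_i(x_1)$. Moreover, the coordinates that jump are typically indices $i\notin S$, so a sign argument on the collapsed $S$-terms is irrelevant.

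In fact the pointwise claim can fail at jump points. Take $d=1$, $N=3$, $p=3$, equal weights, $\mu_1=\LL^1|_{[0,1]}$, $\mu_2=\frac12\delta_{-\varepsilon}+\frac12\delta_{3}$, $\mu_3=\delta_1$.
\begin{itemize}
\item The optimal plan is comonotone, so $T_2$ jumps from $-\varepsilon$ to $3$ at $x_1=\frac12$, which lies in $D^1_\emptyset$.
\item If $T_2(\frac12)=3$, the constant $\frac12\min_i\Lambda^1_i/\max_i|H^1_i|$ evaluated at the lift $(\frac12,3,1)$ is about $0.05$.
\item Just to the left of $\frac12$, however, $\barp=\xp(\cdot,-\varepsilon,1)$ has slope at most about $\varepsilon/2$.
\end{itemize}
Hence the inequality fails for small $\varepsilon$. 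The mirror-image configuration handles the other choice of representative.

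\textbf{Your Lusin fallback is the right repair, but it proves a weaker statement.} Let $K_\eta$ be compact with $\mu_1(\spt\mu_1\setminus K_\eta)<\eta$ and with every $T_i|_{K_\eta}$ continuous. For $x_1\in K_\eta\cap D^1_S$ you get the estimate only for $y_1,\tilde y_1\in B_r(x_1)\cap D^1_S\cap K_\eta$, not on all of $B_r(x_1)\cap D^1_S\cap\spt\mu_1$.

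To make this serve Proposition~\ref{prop:general_Lq_estimate_intro}, you still need one step:
\begin{itemize}
\item Take $x_1$ a Lebesgue density point of $K_\eta\cap D^1_S$ at which $\barp$ is differentiable.
\item For every unit vector $v$ there are points $y_1\in K_\eta\cap D^1_S$ with $(y_1-x_1)/|y_1-x_1|\to v$.
\item Applying the restricted estimate with $\tilde y_1=x_1$ gives $|\nabla\barp(x_1)v|\ge c(x_1)$, hence $J_{\barp}(x_1)\ge c(x_1)^d$.
\item Letting $\eta\to0$ yields this Jacobian lower bound $\mu_1$-a.e.\ on $D^1_S$.
\end{itemize}
That a.e.\ bound is all the general $L^q$ estimate actually uses.
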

\begin{proof}
    Let $S\notin\mathcal{F}_1$ and $x_1 \in \spt \mu_1 \cap D_S^1$. Then the point $\xvec:=(x_1,T_2(x_1),\dots, T_N(x_1))$ belongs to $D_S \cap \spt \gamma_p$. By optimality, $\spt\gamma_p \subset \RR^{dN}$ is $c_p$-monotone. Hence, one can apply  Lemma~\ref{lem:estimate} for every $\yvec \in B_{r(\xvec)}(\xvec)\cap D_S \cap \spt \gamma_p$, for some $r(\xvec)>0$. As $\barp(y_1) = \xp(\yvec)$ with $\yvec = ( y_1, T_2(y_1), \dots , T_N(y_1) ) \in D_S \cap \spt \gamma_p$ whenever $y_1 \in D_S^1 \cap \spt \mu_1$, the estimate \eqref{eq:injectivity estimate barp} follows from \eqref{eq:local_reg_cp_mon} by choosing  $r(x_1):= r(\xvec)$ (so that $B_{r(x_1)}(x_1)\subset \pi_1(B_{r(\xvec)}(\xvec))$).
\end{proof}

We recall this general and straightforward result.
\begin{remark}[Push-forward via an injective map]
\label{rem:pushfor_inj}
Let $\mu,\nu\in\Prob(\RRd)$ and $f$ a measurable function such that $\nu=f_\sharp \mu$. If $f$ is injective, then for every Borel set $A$, $\nu|_{f(A)}=(f|_A)_\sharp\mu$. Indeed, given a Borel set $B \subset \RR^d$, 
    \begin{align}
        \nu|_{f(A)}(B)
            &=
        \nu(B\cap f(A))
            \underset{\nu=f_\sharp\mu}{=}
        \mu
        \big(
            f^{-1}(B\cap f(A))
        \big)
            \underset{(*)}{=}
        \mu
        \big(
            f^{-1}(B)\cap f^{-1}(f(A))
        \big)   
    \\
            &\underset{f \, \text{injective}}{=}
        \mu 
        \big(
            f^{-1}(B)\cap A)
        \big)
            =
        \mu(f|_A^{-1}(B))
            =
        (f|_A)_\sharp\mu(B)
            \, , 
    \end{align}
    where in $(*)$ we simply used that $f^{-1}(C\cap D)=f^{-1}(C)\cap f^{-1}(D)$ for every set $C,D \subset \RR^d$.
\end{remark}
We are finally ready to prove our  general $L^q$-estimate. 
\begin{proof}[Proof of Proposition~\ref{prop:general_Lq_estimate_intro}]
Recall that $\nup={\xp}_\sharp\gammap$, where $\gammap$ is optimal for \eqref{MMpbary}  ${\xp}_\sharp\gammap={\barp}_\sharp \mu_1$. Since $\barp$ is injective and $\LL^d$-a.e. differentiable on the union of $D_S$ with $S\notin\mathcal{F}_1$ (see Corollary~\ref{cor:barp_gleich_S1} and
Proposition~\ref{prop:diff barp}),
 using the properties of the push-forward (cfr. Remark~\ref{rem:pushfor_inj}), we conclude that for every measurable function $\varphi : \RR^d \to [0,+\infty]$,
\begin{align}
    \int \varphi(x_1) f_1(x_1) \dd x_1
        &= 
    \int_{\bigcup\limits_{S\in\mathcal{F}_1}D_S^1}
        \varphi(x_1) f_1(x_1) \dd x_1
            + 
    \int_{\bigcup\limits_{S\notin\mathcal{F}_1}D_S^1}
        \varphi(x_1) f_1(x_1) \dd x_1
\\
        &= 
    \int_{\barp(\bigcup\limits_{S\in\mathcal{F}_1}D_S^1)}
        \varphi(\barp^{-1}(z)) g_p(z) \dd z   
            +
    \int_{\barp(\bigcup\limits_{S\notin\mathcal{F}_1}D_S^1)}{\varphi(\barp^{-1}(z))g_p(z)}\dd z
\\
        &= 
    \int_{\bigcup\limits_{S\in\mathcal{F}_1}D_S^1}
        \varphi(z) g_p(z) \dd z   
            +
    \int_{\bigcup\limits_{S\notin\mathcal{F}_1}D_S^1} \varphi(x_1) g_p(\barp(x_1))  J_{\barp}(x_1)  \dd x_1 
            \, , 
\end{align}
for $J_{\barp}(x_1):= | \det \nabla \barp(x_1) |$, where at last we used that
\begin{itemize}
    \item[i)]  $\barp(x_1) = x_1$ for $x_1 \in D_S^1$ with $S \in \mathcal{F}_1$; 
    \item[ii)] A standard change of variables formula on a set where $\barp$ is a.e. differentiable (see for instance \cite[Theorem 7.1, Corollary 7.2]{Ambrosio-Brue-Semola:2024}).
\end{itemize}
As this holds for every function $\varphi$, this shows that 
\begin{align}
    f_1 (x_1)
        = 
    \begin{cases}
        g_p(x_1)     & \text{if } x_1 \in \bigcup\limits_{S\in\mathcal{F}_1}D_S^1
            \, , 
    \\  
        g_p(\barp(x_1)) |J_{\barp}(x_1)|  & \text{if } x_1 \in 
            \bigcup\limits_{S\notin\mathcal{F}_1}D_S^1  
                \, , 
    \end{cases} 
\end{align}
for $\mu_1$-a.e. $x_1 \in \RR^d$. 
Therefore, we can compute the  $L^q$ norm of $g_p$ as 
\begin{align}
    \int |g_p(z)|^q\dd z
        &=
    \int_{\bigcup\limits_{S\in\mathcal{F}_1}D_S^1} 
        |g_p(z)|^q\dd z
            +
    \int_{\barp(\bigcup\limits_{S\notin\mathcal{F}_1}D_S^1)} 
        |g_p(z)|^q\dd z
\\
        &=
    \int_{\bigcup\limits_{S\in\mathcal{F}_1}D_S^1} 
        |f_1(z)|^q\dd z
            +
    \int_{\bigcup\limits_{S\notin\mathcal{F}_1}D_S^1}
        |g_p(\barp(x_1))|^q J_{\barp}(x_1)\dd x_1
\\
        &=
    \int_{\bigcup\limits_{S\in\mathcal{F}_1}D_S^1} 
    |f_1(z)|^q\dd z
            +
    \int_{\bigcup\limits_{S\notin\mathcal{F}_1}D_S^1}
        \frac{|f_1(x_1)|^q}{J_{\barp}(x_1)^{q-1}}\dd x_1
\\
        &\le 
    \int_{\bigcup\limits_{S\in\mathcal{F}_1}D_S^1}
        |f_1(z)|^q\dd z 
            +
    \frac12
        \int_{\bigcup\limits_{S\notin\mathcal{F}_1}D_S^1}
        \left(
        \frac
            {\max_{i \notin S}|H^1_{i}(x_1)|}
            {\min_{i\notin S}\Lambda^1_i(x_1)}\right)^{d(q-1)}
                |f_1(x_1)|^q\dd x_1
            \, , 
\end{align}
where the last inequality is given by Corollary \ref{cor:injectivity estimate barp}, which concludes the proof.
\end{proof}
\begin{remark}[Alternative proof of integrability in case of distant supports]\label{rem:alternative_proof_integrability}
The estimate provided in Proposition~\ref{prop:general_Lq_estimate_intro} holds under the simple assumption that $\mu_1 \ll \LL^d$, but, as mentioned, the right-hand side may be $+\infty$ even if $f\in L^q$. However, under assumptions \eqref{eq:compact_supports_intro} and \eqref{eq:ass on supports_intro}, Proposition~\ref{prop:general_Lq_estimate_intro} provides an alternative proof of $q$-integrability for $g_p$ \footnote{Notice that this is a slightly weaker result than Theorem~\ref{thm:integrability_distant_intro}, as the stronger assumption \eqref{eq:ass on supports_intro} is required for every $1<p<\infty$.}.
Indeed for every set of indexes $S$, 
\begin{align}
\label{eq:bounds_distant1}
    \max_{x_1 \in \spt\mu_1} \max_{i \notin S}|H^1_{i}(x_1)|
        \leq \begin{cases}
         (p-1) m^{p-2} 
            \,  , \quad &\text{if } 1<p<2 \, , \\
         (p-1) M^{p-2} 
            \,  , \quad &\text{if } p\ge 2 \, ,
        \end{cases}
\end{align}
and 
\begin{align}\label{eq:bounds_distant2}
    \min_{x_1 \in \spt\mu_1} \min_{i\notin S}\Lambda^1_i(x_1)
        \geq \begin{cases}
        \big( \min_{ i \notin S} \lambda_i \big)\frac{M^{2(p-2)}}{(p-1)m^{p-2}} = \frac{m^{2(2-p)}}{(p-1)M^{2-p}} 
        \,, \quad &\text{if } 1<p<2 \, ,   \\
        \big( \min_{ i \notin S} \lambda_i \big)
    \frac{m^{2(p-2)}}{(p-1)M^{p-2}} 
        \,, \quad &\text{if } p\ge 2 \,  . 
        \end{cases}
\end{align}
Exploiting these estimates in \eqref{eq:estimate_Lq_general} of Proposition~\ref{prop:general_Lq_estimate_intro}, under the assumption \eqref{eq:ass on supports_intro}, we have  
\begin{align}
    \| g_p \|_{L^q(\RR^d)} 
        \leq \begin{cases}
        \Big(
        1 \vee 
        \frac
            {(p-1)^2 M^{2(2-p)}}
            {(\min_i \lambda_i) m^{2(2-p)}}
     \Big)^{d q'}
        \| f_1 \|_{L^q(\RR^d)}
        \, ,  \quad &\text{if } 1<p<2  \, \\
         \Big(
        1 \vee 
        \frac
            {(p-1)^2 M^{2(p-2)}}
            {(\min_i \lambda_i) m^{2(p-2)}}
     \Big)^{d q'}
        \| f_1 \|_{L^q(\RR^d)}
        \, ,  \quad &\text{if } p\ge 2 \, .
        \end{cases} 
\end{align}
which provides integrability, in a similar spirit as Theorem~\ref{thm:integrability_distant_intro}.

\end{remark}

\section{Optimal maps for $p$-Wasserstein distance and barycenters via affine transformations}
\label{sec:examples}
In this final section of our work,  we first discuss optimal maps for the $p$-Wasserstein distance, between two measures, the second given by the push-forward with respect to an affine transformation of the first. Moreover, accordingly, we explicitly compute barycenters in the $p$-Wasserstein space for a special class of affine transformations.

\subsection{Optimality of affine transformations for the $p$-Wasserstein distance}
First step of this chapter is to discuss the optimality of affine transformations for the $p$-Wasserstein distance. Recall that for $p=2$, and any $\mu \ll \LL^d$,  any $T=\nabla u$ with $u$ convex is optimal between $\mu$ and its image $T_{\#} \mu$. In particular, when looking at maps of the form $T = Ax + b$ for some $A \in \mathbb{M}(d)$ ($d\times d$ matrices with real entries) and $b \in \RR^d$ is indeed optimal whenever $A$  is symmetric and $A \geq 0$, as it is the gradient of the convex function $u(x) = \frac12 \langle Ax, x \rangle + b x$. 

In this chapter, we are going to discuss optimality for $W_p$ for general $1<p<\infty$. We show in particular that $A \geq 0$ and symmetric does not suffice to guarantee optimality on $T$, and much more restrictive conditions on $A$ must be imposed.

First of all, classical duality arguments shows that, given $\mu \in \calP_p(\RR^d)$ so that $\mu \ll \LL^d$ and any other $\nu \in \calP_p(\RR^d)$, there exists a unique optimal map $T$ from $\mu$ to $\nu$,
which is of the form 
\begin{align}
\label{eq:optimality_p}
    T(x) = x - 
    \frac
        {\nabla \varphi(x)}
        {|\nabla \varphi(x)|^{\alpha_p}}
\end{align}
where $\alpha_p = \frac{p-2}{p-1} \in (0,1)$ and $\varphi$ is a $p$-concave map, which means that the double $p$-transform coincides with the function itself, i.e. $(\varphi^{p})^{p} = \varphi$, where for a proper function $\varphi$ (somewhere finite) the $p$-transform is defined by 
\begin{align}
    \varphi^{p}(y)
        = 
    \inf_{x \in \RR^d} 
    \left\{
       \frac 1p |x-y|^p - \varphi(x) 
    \right\}
            \in [-\infty, +\infty) 
            \, , \qquad 
        \forall y \in \RR^d 
            \, .
\end{align}

Note indeed that for $p=2$ the map is of the form $T(x) = x - \nabla \varphi(x) = \nabla \big( \frac12 |x|^2 - \varphi(x) \big)$ where now the $2$-concavity of $\varphi$ simply implies that $x \mapsto \frac12|x|^2 - \varphi(x)$ is indeed a convex function.

The picture significantly changes when $p\neq2$, as our next theorem shows. In what follows, for $k\in \NN$, we denote by $\text{Id}_k \in \mathbb{M}(k)$ the corresponding $k\times k$ identity matrix. For $A$ matrix, we denote by $\sigma(A)$ the spectrum of $A$.

\begin{theorem}[Optimality of affine maps for $p\neq 2$]
\label{thm:optimality_affine_Wp}
    Let $A \in \mathbb{M}(d)$ be symmetric and $b \in \RR^d$, and consider the map $T:\RR^d \to \RR^d$ given by $T(x) = Ax + b$, for $x \in \RR^d$. Then $T$ is an optimal map from any measure $\mu \in \calP_p(\RR^d)$ and its image $T_{\#} \mu$ if and only if $\sigma(A)\subset\{1,\zeta\}$, for some  $\zeta\geq 0$.
\end{theorem}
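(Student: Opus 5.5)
The proof splits into necessity and sufficiency. Both rest on the description of optimal maps in \eqref{eq:optimality_p}: $T$ is optimal from $\mu\ll\LL^d$ if and only if $x-T(x)=\nabla\varphi(x)/|\nabla\varphi(x)|^{\alpha_p}$ for a $p$-concave $\varphi$. Equivalently,
\begin{align}
  F(x):=|x-T(x)|^{p-2}\big(x-T(x)\big)=\nabla\varphi(x) \quad \mu\text{-a.e.}
\end{align}
Set $B:=\mathrm{Id}-A$, which is symmetric, so that $x-T(x)=Bx-b=:y(x)$.

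\emph{Necessity.} Since $T$ must be optimal for every $\mu$, I take $\mu$ uniform on an open ball $U$ on which $y(x)\neq0$. There $\varphi$ is differentiable a.e., and by the Alexandroff differentiability recalled in Theorem~\ref{th:duality} its gradient $F$ has a symmetric a.e. derivative. On the other hand $F$ is smooth on $U$, with
\begin{align}
  \nabla F(x)=|y|^{p-2}\big(\mathrm{Id}+(p-2)\hat y\otimes\hat y\big)B, \qquad \hat y=\frac{y}{|y|}.
\end{align}
Since $p\neq2$, symmetry of this matrix is equivalent to $(\hat y\otimes\hat y)B=B(\hat y\otimes\hat y)$, i.e.\ $y(x)$ is an eigenvector of $B$ for every $x\in U$.

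As $x$ ranges over $U$, $y(x)=Bx-b$ sweeps an open piece of the affine space $\mathrm{range}(B)-b$. If every point of such a piece is an eigenvector of $B$, then $B$ acts as a scalar $\kappa$ on $\mathrm{range}(B)$. Hence $\sigma(B)\subset\{0,\kappa\}$, that is, $\sigma(A)\subset\{1,\zeta\}$ with $\zeta=1-\kappa$.

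To get $\zeta\ge0$ I use two-point $c$-monotonicity. Take $\mu$ concentrated near two points $x$ and $x+te$, with $e$ a $\zeta$-eigenvector. If $\zeta<0$, then $T$ reverses the order along the line spanned by $e$, and for a strictly convex cost the crossed pairing is strictly cheaper (one-dimensional monotonicity). This contradicts optimality.

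This argument also constrains $b$: for $y(x)$ to remain an eigenvector, the component of $b$ in $\ker B$ must vanish unless $B=0$. I expect the intended statement tacitly assumes this, i.e.\ $b\in\mathrm{range}(B)$ or $A=\mathrm{Id}$, and I would make it explicit.

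\emph{Sufficiency.} Let $P$ be the orthogonal projection onto the $\zeta$-eigenspace, so that $A=\mathrm{Id}+(\zeta-1)P$. If $\zeta=1$, then $T$ is a translation, and a translation is optimal for every convex cost by Jensen's inequality. If $\zeta\neq1$, write $b=(1-\zeta)Px_0$. Then
\begin{align}
  x-T(x)=(1-\zeta)P(x-x_0),
\end{align}
so $T$ is a homothety of ratio $\zeta\ge0$ centred at $x_0$ acting only on the $\zeta$-eigenspace. I propose the potential
\begin{align}
  \varphi(x)=\frac{|1-\zeta|^{p-2}(1-\zeta)}{p}\,|P(x-x_0)|^p.
\end{align}
Its gradient is exactly the $F$ above, and it depends only on the $\zeta$-eigenspace component.

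Because the cost $\frac1p|\cdot|^p$ only increases when the $\ker P$ components differ, it suffices to check $p$-concavity, i.e.\ the equality $\varphi(x)+\varphi^p(T(x))=\frac1p|x-T(x)|^p$, with $x$ and its competitors sharing the same $\ker P$ component. This reduces to a radial problem in the $\zeta$-eigenspace. There the homothety $z\mapsto\zeta z$ with $\zeta\ge0$ is optimal for any $\mu$: it is the gradient of a radial function, and one can check $c$-cyclical monotonicity directly, or reduce to one dimension along rays via the radial symmetry of the cost.

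I expect this $p$-concavity verification, in particular the reduction that discards the $\ker P$ directions rigorously, to be the main obstacle. My first attempt would be a direct inequality
\begin{align}
  \frac1p|x-T(x')|^p-\varphi(x)\ \ge\ \frac1p|x'-T(x')|^p-\varphi(x'),
\end{align}
obtained by splitting $x-T(x')$ into orthogonal components and using the monotonicity of $s\mapsto(a^2+s^2)^{p/2}$.
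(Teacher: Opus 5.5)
Your proposal is correct in substance and follows a somewhat different route from the paper. Your caveat on $b$ is right and exposes an error in the paper's statement. The one real soft spot is the homothety step in the sufficiency part.

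\emph{Necessity.} Your argument is correct, and its core mechanism is the same as the paper's. Symmetry of the (Alexandroff) Hessian of the $p$-concave potential forces $x-T(x)$ to be an eigenvector of $\mathrm{Id}-A$ wherever it is nonzero. The packaging differs:
\begin{enumerate}
\item The paper sets $b=0$, differentiates $\nabla\varphi/|\nabla\varphi|^{\alpha_p}=\underline{A}x$ and deduces $\nabla^2\varphi\,\nabla\varphi\parallel\nabla\varphi$. It then shows the eigenvalue $\underline{\lambda}(x)$ is constant by a connectedness argument on $\RR^d\setminus\{0\}$ and finishes with a spanning argument.
\item You differentiate $F=|y|^{p-2}y$ directly. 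You then use that a relatively open subset of $\mathrm{range}(B)-b$ cannot lie in a finite union of proper eigenspaces. This is shorter and keeps track of $b$.
\end{enumerate}
For $\zeta\ge0$, the paper shows that when $1-\zeta>1$ the $p$-transform of the explicit potential is identically $-\infty$. That relies on the potential being determined on all of $\RR^d$. Your two-point crossing argument, based on strict convexity of $s\mapsto|u+se|^p$, is more elementary and equally valid.

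\emph{The translation vector.} Your caveat is correct: it is a genuine defect of the statement, not a tacit convention. The paper reduces to $b=0$ by conjugating with $\tau_b$, but $\tau_b\circ T\circ\tau_b^{-1}(x)=Ax+(2\mathrm{Id}-A)b$. Conjugating by any $\tau_c$ only changes $b$ by $(\mathrm{Id}-A)c$, so the component of $b$ in $\ker(\mathrm{Id}-A)$ cannot be removed.

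A concrete counterexample: take $d=2$, $A=\mathrm{diag}(1,0)$, $b=e_1$ and $\mu=\frac12(\delta_{(0,0)}+\delta_{(t,1)})$.
\begin{enumerate}
\item The pairing induced by $T$ costs $1+2^{p/2}$.
\item The crossed pairing costs $(1+t)^p+((1-t)^2+1)^{p/2}$.
\item The derivative of the difference at $t=0$ is $p(1-2^{(p-2)/2})\neq0$.
\end{enumerate}
So for a suitable small $t$, $T$ is not optimal, although $\sigma(A)=\{0,1\}$. The correct statement requires $A=\mathrm{Id}$ or $b\in\mathrm{range}(\mathrm{Id}-A)$. The later use in Proposition~\ref{prop:bary_affine} is unaffected, since there $w_i=(\mathrm{Id}-A_i\overline{A}^{-1})v$.

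\emph{Sufficiency.} Your potential coincides with the paper's. The reduction to $\mathrm{range}(P)$ is immediate: it is \eqref{eq:ctrasnf_restriction} in the paper, or your observation that $\varphi$ ignores the $\ker P$ component while $|x-T(x')|$ only decreases when that component of $x$ is replaced by that of $x'$. So this is not the obstacle.

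The real content is optimality of the homothety $z\mapsto\zeta z$ on $\mathrm{range}(P)$, which you only assert. The justification you offer, that it is the gradient of a radial convex function, is a $p=2$ argument. It is exactly what this theorem shows to be insufficient for $p\neq2$. The paper settles this step by computing the double $p$-transform of $\frac1p\lambda|\lambda|^{p-2}|\hat x|^p$, with $\lambda=1-\zeta$, separately for $\lambda\le0$, $\lambda\in(0,1)$ and $\lambda=1$.

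Your direct inequality closes the step quickly:
\begin{enumerate}
\item Take $x_0=0$, $u=Px$, $z=Px'$. Since $\zeta\ge0$, one has $|u-\zeta z|\ge\big||u|-\zeta|z|\big|$.
\item So it suffices that $r=s:=|z|$ minimises $g(r)=|r-\zeta s|^p-\lambda|\lambda|^{p-2}|r|^p$ over $r\in\RR$.
\item If $\zeta=0$, then $g\equiv0$ and there is nothing to prove.
\item If $\zeta>0$, then $g$ is $C^1$ with $g(r)/|r|^p\to1-\lambda|\lambda|^{p-2}>0$, so it has a global minimiser, which is a critical point.
\item Critical points solve $\phi(r-\zeta s)=\phi(\lambda r)$ with $\phi(t)=|t|^{p-2}t$ injective. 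This means $\zeta r=\zeta s$, i.e.\ $r=s$.
\end{enumerate}
This gives $\varphi(x)+\varphi^p(T(x))=\frac1p|x-T(x)|^p$ for every $x$. Together with the $p$-growth of $\varphi$, the usual duality argument yields optimality for every $\mu\in\calP_p(\RR^d)$.
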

In particular, it means that, up to orthogonal transformations, the matrix $A$ is of the form 
\begin{align}
\label{eq:representation_A_blocks}
    A
        =
    \left[ 
    \begin{array}{c|c} 
      \text{Id}_k & \mathbf{0} \\ 
      \hline 
      \mathbf{0} & \zeta \text{Id}_{d-k} 
    \end{array} 
    \right]
\end{align}
for some $ k \in \{ 0, \dots , d \}$ and $\zeta \in [0, +\infty)$. Cases include
\begin{enumerate}
    \item $A = \text{Id}_d$. \ 
    In this case $T(x) = x + b$ is simply a translation, and it is always optimal between any measure $\mu \in \calP_p$ and its translation for every $b \in \RR^d$. 
    \item $A=0$ (i.e. $k=0$ and $\zeta = 0$). \ 
    In this case $T(x) = b$ is constant, and clearly optimal between $\mu$ and $\delta_{b}$, for every $\mu \in \calP_p(\RR^d)$.
    \item $A = \zeta \text{Id}_d$ for $\zeta \in (0,+\infty)$. \ 
    In this case $T(x) = \zeta \text{Id}_d + b$ consists of a translation and a dilation.
    \item Intermediate cases: $ k \notin \{ 0 , d \}$. \ 
    In this case, the map $T$ is, up to a translation, leaving invariant a subspace of dimension $k$ and (nonnegatively) dilating its orthogonal subspace of dimension $d-k$. 
\end{enumerate}

Observe that the optimality of the translations (case (1)) can also be proved directly as a consequence of Jensen's inequality: indeed, let $T(x) = x + b$ and $\mu \in \calP_p(\RR^d)$. Then for every admissible coupling $\pi \in \calP(\RR^d \times \RR^d)$ between $\mu$ and $\nu = T_{\#}\mu$, by Jensen's inequality we have that
\begin{align}
    \int_{\RR^d \times \RR^d} 
|x-y|^p  \dd \pi(x,y) 
            &\geq 
        \left|
            \int (x-y) \dd \pi(x,y) 
        \right|^p
             =
        \left|
            \int x \dd \mu(x)
                - 
            \int y \dd \nu(y)
        \right|^p
\\  
            &=
        \left|
            \int x \dd \mu(x) 
                - 
            \int (x+b) \dd \mu(x) 
        \right|^p 
            =
        |b|^p 
\\
            &= 
        \int_{\RR^d} 
            |x-T(x)|^p \dd \mu(x) 
            \,  ,
\end{align}
which shows the claimed optimality. 

\begin{proof}
We start by showing that if $T(x) = Ax + b$ is optimal between $\mu \in \calP_p(\RR^d)$ and its image, then $A$ must necessarily satisfy $A \geq 0$ and its spectrum has at most one eigenvalue different from $1$.
First of all, without loss of generality we can assume $b=0$. If not, it is enough to transform $\mu$ via the translation $\tau_b: x \mapsto x+b$ and then work with its image (note that a map $T$ is optimal between $\mu$ and $\nu$ if and only if $\tau_b \circ T \circ \tau_b^{-1}$ is optimal between $(\tau_b)_{\#}\mu$ and $(\tau_b)_{\#}\nu$).

According to \eqref{eq:optimality_p}, $T$ is optimal if and only if 
    \begin{align}
    \label{eq:optimality_p_2}
        \frac
        {\nabla \varphi(x)}
        {|\nabla \varphi(x)|^{\alpha_p}}
            = 
        \unA  x
            \, , \qquad
        \text{where} \quad 
        \unA = \text{Id}_d - A
            \, , 
    \end{align}
    for some $p$-concave function $\varphi$.
    We observe that if $R \in \text{SO}(d)$ and $\varphi$ satisfies  \eqref{eq:optimality_p_2}, then the function $\phi$ defined by $\phi(x) := \varphi(Rx)$ solves 
    \begin{align}
    \label{eq:invariance_rotations}
        \frac
            {\nabla \phi(x)}
            {|\nabla \phi(x)|^{\alpha_p}}
                = 
        \frac
            {R^T\nabla \varphi(Rx)}
            {|\nabla \varphi(Rx)|^{\alpha_p}}          
                =
            \big( R^T \unA R \big)  x    
                \, , \qquad 
        \forall x \in \RR^d 
            \, ,
    \end{align}
    hence, the problem is rephrased in terms of a new matrix $R^T \unA R$, which has the same spectrum of $\unA$.
    Therefore, without loss of generality we can assume $\unA$ diagonal. Secondly, we note that \eqref{eq:optimality_p_2} implies 
    \begin{align}
    \label{eq:range_prop_grad}
        | \nabla \varphi(x) |^{1-\alpha_p} 
            = 
        | \unA x |
            \quad 
                \Rightarrow
            \quad 
        \nabla \varphi(x) 
            =
        | \unA x |^{\frac{\alpha_p}{1-\alpha_p}} 
            \unA x 
            = 
        | \unA x |^{p-2}
            \unA x 
                \, , \qquad 
    \end{align}
    and thus, when $p>2$, $\varphi \in C^2(\RR^d)$, while for $1<p<2$, $\varphi \in C^1(\RR^d)\cap C^2(\RR^d\setminus \ker(A))$. Furthermore, this shows that
    \begin{align}
        \label{eq:gradient_const_kernel}
        \nabla \varphi(x) \in \text{range}(\unA) 
            \qquad 
        \forall x \in \RR^d
            \, ,
    \end{align}
  and $\varphi$ is constant in the directions of $\ker(\unA)$. Since $\unA$ is diagonal, either $\ker(\unA) = \{ 0 \}$ or $\ker(\unA) = \text{Span} \{ e_1, \dots , e_k \}$, for some $k \in \{ 1, \dots, d \}$. In the second case, property \eqref{eq:gradient_const_kernel} shows that $\partial_j \varphi(x) = 0$ for every $j \leq k$, hence $\varphi$ does not depend on the first $k$ variables. In other words, there exists $\hat \varphi: \RR^{d-k} \to \RR$ such that 
    \begin{align}
    \label{eq:independent_coordinates}
        \varphi(x) = \hat \varphi(\hat x)
            \, , \qquad
        \forall \hat x \in \RR^{d-k} 
            \, , \,  
        x = (x_1, \dots, x_k, \hat x) \in \RR^d  
            \, .
    \end{align}
    Therefore, either $k=d$, i.e. $\unA=0$ and $A=\mathrm{Id}$ (case (1)), or $k<d$. In the latter case, if we denote by  $\hat A \in \mathbb{M}(d-k)$ the (now invertible) linear map so that $\unA x = (0,\hat \unA \hat x)$, the considerations in \eqref{eq:invariance_rotations} ensure that $\hat \varphi$ solves 
    \begin{align}
        \frac
        {\nabla \hat \varphi(\hat x)}
        {|\nabla \hat \varphi(\hat x)|^{\alpha_p}}
            = 
        \hat \unA  \hat x
            \, , \qquad
       \forall \hat x \in \RR^{d-k} 
            \, .
    \end{align}
    Note that $A$ has at most one eigenvalue different from $1$ if and only if $\unA$ has at most one eigenvalue different from zero, which is then equivalent to the fact that $\hat \unA = \lambda \text{Id}_{d-k}$ for some $\lambda \in \RR$ . Moreover, for $y=(y_1 , \dots, y_k, \hat y) \in \RR^d$, $\hat y \in \RR^{d-k}$,
    \begin{align}
    \label{eq:ctrasnf_restriction}
        \varphi^{p}(y) 
            = 
        \inf_{x \in \RR^d}  
        \left\{
            \frac 1p |x-y|^p - \varphi(x) 
        \right\}
            =
        \inf_{\hat x \in \RR^{d-k} }
        \left\{
           \frac 1p |\hat x- \hat y|^p - \hat \varphi(\hat x) 
        \right\}
            =
        \hat \varphi^{p}(\hat y) 
            \, .
    \end{align}
    It is then clear that $\varphi$ is $p$-concave (in $\RR^d$) if and only if $\hat \varphi$ is $p$-concave (in $\RR^{d-k}$).
    This means that without loss of generality, we can assume that $\ker(\unA) = \{0\}$ (hence $\hat \varphi = \varphi$), or else we simply work with $\hat \unA$. With this restriction, when $1<p<2$, $\varphi\in C^2(\RR^d\setminus\{0\})$). By taking a second derivative in \eqref{eq:optimality_p_2}, a simple chain rule shows that
    \begin{align}
    \label{eq:optimality_p_3}
        \frac{1}{|\nabla \varphi(x)|^{\alpha_p}}
        \bigg(
            \nabla^2 \varphi(x) 
                - 
            \frac1{\alpha_p} 
            \frac
                {\nabla \varphi(x)}
                {|\nabla \varphi(x)|}
                    \otimes 
            \Big(
                \nabla^2 \varphi(x)
                \frac
                    {\nabla \varphi(x)}
                    {|\nabla \varphi(x)|}
            \Big)
        \bigg)
            = 
        \unA 
            \, , \qquad 
        \forall x \in \RR^d  \setminus {\{ 0 \} } 
            \, .
    \end{align}
    As $\unA$ is symmetric, and the hessian of $\varphi$ as well, this implies that 
    \begin{align}
        \nabla \varphi(x)
                \otimes 
        \Big(
            \nabla^2 \varphi(x)
                \nabla \varphi(x)
        \Big)
    \end{align}
    is symmetric as well. But this rank-one matrix is symmetric if and only if the two factors of the tensor products are parallel to each other, namely there exists a map $\lambda: \RR^d \to \RR$ such that 
    \begin{align}
        \nabla^2 \varphi(x) \nabla \varphi(x) 
            = 
        \lambda(x)
            \nabla \varphi(x) 
                \, , \qquad 
            \forall x \in \RR^d  \setminus {\{ 0 \} } 
                \, .
    \end{align}
    In other words, the gradient of $\varphi$ is everywhere an eigenfunction of the hessian of $\varphi$. By plugging in this new information in \eqref{eq:optimality_p_3}, we conclude that 
    \begin{align}
         \frac{1}{|\nabla \varphi(x)|^{\alpha_p}}
        \bigg(
            \nabla^2 \varphi(x) 
                - 
            \frac1{\alpha_p} 
            \lambda(x)
            \frac
                {\nabla \varphi(x)}
                {|\nabla \varphi(x)|}
                    \otimes 
                \frac
                    {\nabla \varphi(x)}
                    {|\nabla \varphi(x)|}
            \Big)
        \bigg)
            = 
        \unA 
            \, , \qquad 
        \forall x \in \RR^d \setminus {\{ 0 \} } 
            \, .
    \end{align}
    In particular, for every $x\in\RR^d \setminus {\{ 0 \} } $, we infer that
    \begin{align}
        \unA \nabla \varphi(x) 
            = 
        \underline{\lambda}(x) \nabla \varphi(x) 
            \, , \qquad 
        \underline{\lambda}(x) 
            =
        \frac1{|\nabla\varphi(x)|^{\alpha_p}}
        \lambda(x) 
        \Big( 
            1 - \frac1{\alpha_p}
        \Big)
            \, ,
    \end{align}
    which shows that for every $x \in \RR^d \setminus {\{ 0 \} } $, $\nabla \varphi(x)$ is an eigenfunction of the (constant) matrix $\unA$. Our claim is that in fact $\underline{\lambda}(x) \equiv \underline{\lambda}$ is constant. 

    To see this, observe that \eqref{eq:range_prop_grad} and $\ker(\unA) = \{ 0 \}$ ensure $\nabla \varphi(x) =0$ only at  $x =0$. In particular, aside the value $\lambda(0)$ which is not uniquely determined, the remaining one are uniquely determined by 
    \begin{align}
        \unlambda(x) 
            =
        \frac{\langle \unA \nabla \varphi(x), \nabla \varphi(x) \rangle}{|\nabla \varphi(x)|^2}
            \, , \qquad 
        \text{for} \quad 
            x \in \RR^d  \setminus {\{ 0 \} } 
                \, ,
    \end{align}
    and therefore it is continuous on $\RR^d \setminus \{0\}$ (it is in fact $C^1$, as $\varphi \in C^2$). Assume now there exists $x, y \in \RR^d \setminus \{0\}$ such that $\unlambda(x) \neq \unlambda(y)$. As $\RR^d \setminus \{0\}$ is connected for $d \geq 2$, pick any continuous curve $\gamma^{xy}: [0,1] \to \RR^d \setminus \{0\}$ from $x=\gamma^{xy}(0)$ to $y=\gamma^{xy}(1)$. Then the function 
    $
        \unlambda^{xy}: [0,1] \to \RR  
    $
    defined by 
    $
        \unlambda^{xy} = \unlambda \circ \gamma^{xy} 
    $
    is a continuous curve satisfying $\unlambda^{xy}(0) = \unlambda(x) \neq \unlambda(y) = \unlambda^{xy}(1)$. Denote by $t_0 \in [0,1)$ the largest time so that $\unlambda^{xy}(t) = \unlambda^{xy}(0)$, or equivalent
    \begin{align}
       t_0 := \max  
        \big\{t \in [0,1) \: : \: \unlambda^{xy}(t) = \unlambda^{xy}(0) \big\}
                \, .
    \end{align}  
    In particular, by continuity and construction we have that 
    \begin{align}
    \label{eq:nonconstant_lambda}
        \unlambda^{xy}(t) \neq \unlambda^{xy}(t_0) 
            \, , \qquad 
        \forall t > t_0 
            \, .
    \end{align}
    Now, the spectral theorem for symmetric matrices ensures that eigenfunctions corresponding to different eigenvalues are necessarily orthogonal. As $\nabla\varphi(x)$ is an eigenfunction of $\unA$ with eigenvalue $\unlambda(x)$, \eqref{eq:nonconstant_lambda} would imply for $t > t_0$
    \begin{align}
        0
            =
        \langle 
            \nabla \varphi(\gamma^{xy}(t_0) )
                , 
            \nabla \varphi(\gamma^{xy}(t))
        \rangle 
            \xrightarrow[t \to t_0]{} 
        \langle 
            \nabla \varphi(\gamma^{xy}(t_0))
                , 
            \nabla \varphi(\gamma^{xy}(t_0))
        \rangle
            = 
        \big| 
            \nabla \varphi(\gamma^{xy}(t_0))
        \big|^2 
            \neq 0 
    \end{align} 
    as $\gamma^{xy}(t_0) \in \RR^d \setminus \{0\}$, which is clearly a contradiction. 

    We thus showed that 
    \begin{align}
       \unA \nabla \varphi(x) 
            = 
        \underline{\lambda} \nabla \varphi(x) 
            \, , \quad 
        \forall x \in \RR^d 
            \, , \qquad
        \text{for some} \:  
            \unlambda \in \RR 
                \, .
    \end{align}

    In order to conclude the proof that $\unA = \unlambda \text{Id}$, we shall prove that the span of $\{ \nabla \varphi(x) \, : \, x \in \RR^d\}$ is the whole space $\RR^d$. Indeed, pick a $v \in \RR^d$ such that $\langle v , \nabla \varphi(x) \rangle = 0$ for every $x \in \RR^d$. Then from \eqref{eq:range_prop_grad} we infer that 
    \begin{align}
        0 
            = 
        \langle 
            \nabla \varphi(x) 
                , 
            v
        \rangle 
            = 
        |\unA x |^{p-2} 
        \langle 
            \unA x
                , 
            v 
        \rangle 
            = 
        |\unA x |^{p-2} 
        \langle 
            x
                , 
            \unA v
        \rangle
            \, , \quad 
        \forall x \in \RR^d
            \, , 
    \end{align}
   as $\unA$ is symmetric. As $\ker(\unA) = \{ 0 \}$, we infer that $\unA v =0$, and therefore that $v=0$. We show now that $A\ge 0$ or, equivalently, that $\unlambda\in (-\infty,1]$.  By \eqref{eq:range_prop_grad} we infer that 
    $
    \hat\varphi(\hat x) 
            = \frac1p \lambda|\lambda|^{p-2}|\hat x|^p
    $, hence, if $\lambda >1$, computing its $p$-transform,
    \begin{align}
        \hat \varphi^{p}(\hat y) 
            =
        \frac1p
        \inf_{\hat x \in \RR^{d-k} }
            \Big\{
                |\hat x - \hat y|^p - \lambda^{p-1}|\hat x|^p
            \Big\}=-\infty
                \, , \text{for all} \, \hat y,
    \end{align}
    which shows that there can not exist any $p$-concave potential satisfying \eqref{eq:optimality_p_2}. Recall indeed that from \eqref{eq:ctrasnf_restriction} we have that $\varphi$ is $p$-concave if and only if $\hat \varphi$ is. 
    This concludes the proof that $A$ must necessarily satisfy the claimed conditions. 

    We are left to show that this is in fact sufficient, namely that if $A\geq 0$ and its spectrum has at most one eigenvalue different from $1$, then $T(x) = Ax + b$ is optimal between any $\mu \in \calP_p(\RR^d)$ and its image $T_{\#}\mu$. To do that, we are going to explicitly construct a $p$-concave potential $\varphi$ satisfying \eqref{eq:optimality_p_2}. Without loss of generality we can assume $b=0$ and directly work in the system of coordinates which makes $A$ diagonal, namely we assume that $A$ has the form \eqref{eq:representation_A_blocks} for some $k \in \{0, \dots , d \}$ and $\zeta \in [0, +\infty)$.     
    Following the same steps as in the proof above, it suffices to construct a function $\hat \varphi:\RR^{d-k} \to \RR$ such that 
    \begin{align}
        \frac
        {\nabla \hat \varphi(\hat x)}
        {|\nabla \hat \varphi(\hat x)|^{\alpha_p}}
            = 
       (1-\zeta) \hat x 
            \, , \qquad
       \forall \hat x \in \RR^{d-k} 
            \, ,
    \end{align}
    and subsequently define $\varphi$ as in \eqref{eq:independent_coordinates}. Recall indeed that from \eqref{eq:ctrasnf_restriction} we have that $\varphi$ is $p$-concave if and only if $\hat \varphi$ is. Set $\lambda := (1-\zeta) \in (-\infty , 1]$, then \eqref{eq:range_prop_grad} reads  $\nabla \hat \varphi (\hat x) =  \lambda |\lambda|^{p-2} |\hat x|^{p-2} \hat x$, which (up to additive constants) implies that 
    \begin{align}
        \hat\varphi(\hat x) 
            = 
        \frac1p 
            \lambda |\lambda|^{p-2} 
            |\hat x|^p
                \, , \qquad 
            \forall \hat x \in \RR^{d-k}
                \, .
    \end{align}
    The final  step is to prove that for any $\lambda \in (-\infty,1]$, the map $\hat \varphi$ is indeed $p$-concave. 

        \smallskip 
    \noindent
    \underline{Case 1}: $\lambda \leq 0$. \ 
    In this case, we have that 
    $
    \hat\varphi(\hat x) 
            = 
        -\frac1p 
            |\lambda|^{p-1}
            |\hat x|^p
    $, hence computing its $p$-transform 
    \begin{align}
        \hat \varphi^{p}(\hat y) 
            =
        \frac1p
        \inf_{\hat x \in \RR^{d-k} }
            \Big\{
                |\hat x - \hat y|^p + |\lambda|^{p-1}|\hat x|^p
            \Big\}
                \, , 
    \end{align}
    which is nothing but a (suitably weighted) Euclidean $p$-barycenter between $0$ and $\hat y$, and therefore explicitly computable. Indeed it is the unique solution of the associated Euler-Lagrange equation 
    \begin{align}
        |\hat x - \hat y|^{p-2}(\hat x - \hat y) + |\lambda|^{p-1}|\hat x|^{p-2}\hat x=0,
    \end{align}
    whose unique solution is given by $ \hat x =\frac{1}{1+|\lambda|}\hat y$.
    This implies that 
    \begin{align}
        \hat \varphi^{p}(\hat y) 
            = 
        \frac1p
            g_p(\lambda)
            |\hat y|^p
                \, , \qquad 
        g_p(\lambda) :=
            \Big(
                \frac{|\lambda|}{1+|\lambda|}
            \Big)^{p-1}
                \in [0,1) 
                    \, .
    \end{align}
 
    The second $p$-transform is then given by 
    \begin{align}
    (\hat \varphi^{p})^{p}(\hat x) 
            =
        \frac1p
        \inf_{\hat y \in \RR^{d-k} }
            \Big\{
                |\hat x - \hat y|^p - g_p(\lambda)|\hat y|^p
            \Big\}
                \, .
    \end{align}
    As $g_p(\lambda) \in [0,1)$, it follows that, for every $\hat x \in \RR^{d-k}$, the function $\hat y \mapsto |\hat x - \hat y|^p - g_p(\lambda)|\hat y|^p$ is coercive and smooth, 
    and therefore admits a minimizer.  The corresponding Euler--Lagrange equation reads as
    \begin{align}
        |\hat y - \hat x|^{p-2}(\hat y - \hat x) = g_p(\lambda) |\hat y|^{p-2} \hat y
            \, , 
    \end{align}
    which in particular implies 
    \begin{align}
        |\hat y - \hat x|^{p-1} = g_p(\lambda) |\hat y|^{p-1}
            \quad \Longleftrightarrow \quad 
        |\hat y - \hat x| = \frac{|\lambda|}{1+|\lambda|} |\hat y| 
            \, .
    \end{align}
    Substituing this back into the previous optimality conditions, we explicitly find
    \begin{align}
        \hat y - \hat x 
            = 
        g_p(\lambda)^{1-\alpha_p} \hat y
            = 
        \frac{|\lambda|}{1+|\lambda|} \hat y
            \quad \Longleftrightarrow \quad 
        \hat y 
            =
        \big( 1+|\lambda| \big) 
            \hat x
                \, , 
    \end{align}
    which is the sought unique minimiser. By computing the corrisponding minimial value, we conclude that 
    \begin{align}
        (\hat \varphi^{p})^{p}(\hat x)
            &=
        \frac1p 
        \Big(
            |\lambda|^p - g_p(\lambda) (1+|\lambda|)^p 
        \Big)
            |\hat x|^p 
            =
        \frac1p 
        \Big(
            |\lambda|^p - |\lambda|^{p-1} (1+|\lambda|)
        \Big)
            |\hat x|^p 
\\  
            &=
        -\frac1p |\lambda|^{p-1} |\hat x|^p 
            = 
        \hat \varphi (\hat x) 
            \, ,
    \end{align}
    for every $\hat x \in \RR^{d-k}$, which proves the claimed $p$-concavity.

        \smallskip 
    \noindent
    \underline{Case 2}: $\lambda \in (0,1)$. \ 
    In this case, we have that $\hat \varphi(\hat x) = \frac1p \lambda^{p-1}|\hat x|^p$. Alternatively, we can write 
    \begin{align}
        \hat \varphi (\hat x) 
            = 
        \frac1p 
            g_p(\bar \lambda)  |\hat x|^p 
                \, , \qquad 
        \text{where} 
            \: \: 
        \bar\lambda \leq 0  
            \: \: 
        \text{is such that} 
            \: \:
        g_p(\bar \lambda) = \lambda^{p-1} 
            \: \: \Longleftrightarrow \: \: 
        \bar \lambda 
            = 
        \frac\lambda{\lambda -1}
            \, .
    \end{align}
    But in the proof of Case 1, we have shown that $\hat \varphi = \hat \psi^{p}$, for $\hat \psi(\hat y) = - \frac1p |\bar \lambda|^{p-1} |\hat y|^p$, which is then $p$-concave. Therefore $\hat \varphi$ is also $p$-concave.

    \smallskip 
    \noindent
    \underline{Case 3}: $\lambda =1$. \ 
    This corresponds to the degenerate case, corresponding to the matrix $A$ having kernel, and thus $T$ being noninjective. And clearly
    \begin{align}
        \hat \varphi^{p}(\hat y) 
            = 
        \frac1p 
        \inf_{\hat x \in \RR^{d-k} }
            \Big\{
                |\hat y - \hat x|^p - |\hat x|^p
            \Big\}
            = 
        \begin{cases}
            0 
                &\text{if } \hat y =0 
        \\
            -\infty 
                &\text{otherwise} .
        \end{cases} 
    \end{align}
    Taking a second $p$-transform, we find
    \begin{align}
        (\hat \varphi^{p})^{p}(\hat x)    
            = 
        \inf_{\hat y \in \RR^{d-k} }
            \Big\{
                \frac1p|\hat x - \hat y|^p - \hat \varphi^{p}(\hat y) 
            \Big\}
            =
        \frac1p |\hat x|^p 
            = 
        \varphi (\hat x)
            \, , 
    \end{align}
    for every $\hat x \in \RR^{d-k}$, which conclude the proof of the sought $p$-concavity.
\end{proof}

\subsection{Barycenters of measures under affine transformations}
Using the characterization of optimal affine maps for $W_p$ provided in Theorem~\ref{thm:optimality_affine_Wp}, we now describe the $p$-Wasserstein barycenters between measures which are obtained via suitable affine transformations of a common, absolutely continuous measure $\mu\in \calP(\RR^d)$.
    \begin{proposition}[Barycenters under affine transformations]
    \label{prop:bary_affine}
        Let $\mu \ll \LL^d$ be a reference probability measure. Let us consider probability measures of the form $\mu_i = \big( A_i \cdot + v_i \big)_{\#} \mu$, in two cases: either $A_i = \emph{Id}$  or $v_i = v \in \RR^d$ for every $i = 1 , \dots, N$, and in the second case,   
        \begin{itemize}
            \item $A_1$ is invertible. 
            \item $A_i$ are symmetric, positive semidefinite, $d\times d$ matrices. 
            \item The spectrum of the matrices satisfies $\sigma(A_i) \subset \{ 1, \zeta_i \}$ for some $\zeta_i \in [0,+\infty)$. 
            \item The matrices commute, i.e. $[A_i,A_j]=0$ for every $i,j=1, \dots, N$, and the eigenspaces associated to the eigenvalue $1$ of every $A_j \neq \emph{Id}$ coincide. 
        \end{itemize}
        Then the $p$-Wasserstein barycenter between $\mu_1, \dots, \mu_N$ is given by 
        \begin{align}
        \label{eq:statement_bary_affine}
            \overline\nu_p 
                = 
            \big( 
                \overline{A}\cdot + \overline v
            \big)_{\#}\mu
                \, , \qquad 
            \overline{A}:= \xp(A_1, \dots, A_N) 
                \quad \text{and} \quad 
            \overline{v} := \xp(v_1, \dots, v_N)
                \, , 
        \end{align}
        where with the barycenters of matrix we intend 
        \begin{align}
        \label{eq:bary_matrix}
            \xp(A_1, \dots, A_N) 
                :=
            \argmin_{B}
            \sum_{i=1}^N 
                \lambda_i 
                \big| A_i - B \big|^p 
                    \, , \qquad 
                |A| := \emph{Tr}(A^T A)^{\frac12}
                    = 
                \Big( \sum_{i,j=1}^N  A_{ij}^2 
                \Big)^{\frac12}
                    \, ,
        \end{align}
        for every $A$. 

    \end{proposition}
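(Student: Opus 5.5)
The plan is to verify condition \eqref{it:lem_exPot} of Proposition~\ref{lem:characterization} for the candidate $\bar\nu:=(\overline A\cdot+\overline v)_\#\mu$. Concretely, I would exhibit optimal maps $R_i$ from $\bar\nu$ to $\mu_i$ with Kantorovich potentials $(\varphi_i,\psi_i)$ such that $\sum_i\lambda_i\psi_i\equiv0$. Equivalently, following the computation in the proof of Proposition~\ref{prop:characterisation_onedirection}, it suffices to produce optimal maps $R_i$ of the form $R_i=\mathrm{id}-\frac{\nabla\psi_i}{|\nabla\psi_i|^{\alpha_p}}$ (suitably normalized) for which
\[
\sum_i\lambda_i|R_i(z)-z|^{p-2}(R_i(z)-z)=0
\]
holds $\bar\nu$-a.e. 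This pointwise Euler--Lagrange identity is $\sum_i\lambda_i\nabla\psi_i=0$, so after adjusting additive constants we get $\sum_i\lambda_i\psi_i=0$. The natural candidate is $R_i(z)=A_i\overline A^{-1}(z-\overline v)+v_i$, which pushes $\bar\nu$ to $\mu_i$. For this I need $\overline A$ to be invertible, and I will check that below.

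\emph{Translation case} ($A_i=\mathrm{Id}$). Here $\overline A=\mathrm{Id}$ and $R_i(z)=z-\overline v+v_i$ is a translation, which is optimal by case (1) of Theorem~\ref{thm:optimality_affine_Wp}. Its potential is linear, $\psi_i(z)=c_i\langle z,w_i\rangle$, where $w_i$ is determined by $v_i-\overline v$. The identity $\sum_i\lambda_i|v_i-\overline v|^{p-2}(v_i-\overline v)=0$ is exactly the Euler--Lagrange equation \eqref{eq:euler-lagrange} defining $\overline v=\xp(v_1,\dots,v_N)$. The $\psi_i$ are then linear with zero weighted sum, so condition \eqref{it:lem_exPot} holds.

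\emph{Linear case} ($v_i=v$; translate so that $v=0$). The structural assumptions let me diagonalize simultaneously. There is a common subspace $E$, the shared $1$-eigenspace of the $A_j\ne\mathrm{Id}$, on which every $A_i$ is the identity, and on $E^\perp$ every $A_i$ acts as $\zeta_i\,\mathrm{Id}$, with $\zeta_i=1$ when $A_i=\mathrm{Id}$. Write $A_i=P_E+\zeta_iP_{E^\perp}$. Minimizing $\sum_i\lambda_i|A_i-B|^p$ is a strictly convex problem that is invariant under the symmetry group preserving this block structure, so the minimizer has the form $B=aP_E+bP_{E^\perp}$. Since $|A_i-B|^2=k(1-a)^2+(d-k)(\zeta_i-b)^2$, the choice $a=1$ is optimal. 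Hence $\overline A=P_E+\bar\zeta P_{E^\perp}$, where $\bar\zeta$ is the scalar $p$-barycenter of the $\zeta_i$ (up to the constant factor $(d-k)^{p/2}$, which does not change the argmin). Because $A_1$ is invertible, $\zeta_1>0$, and so $\bar\zeta>0$; this gives invertibility of $\overline A$. Then
\[
R_i=A_i\overline A^{-1}=P_E+\frac{\zeta_i}{\bar\zeta}P_{E^\perp}
\]
is symmetric, positive semidefinite, and has spectrum in $\{1,\zeta_i/\bar\zeta\}$, so it is optimal by Theorem~\ref{thm:optimality_affine_Wp}. Moreover $R_i(z)-z=(\zeta_i/\bar\zeta-1)P_{E^\perp}z$, so the Euler--Lagrange identity reduces to
\[
|P_{E^\perp}z|^{p-2}\,P_{E^\perp}z\;\bar\zeta^{1-p}\sum_i\lambda_i|\zeta_i-\bar\zeta|^{p-2}(\zeta_i-\bar\zeta)=0,
\]
which holds because of the scalar Euler--Lagrange equation for $\bar\zeta$.

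I expect the main obstacle to be the passage from the pointwise identity $\sum\lambda_i\nabla\psi_i=0$ to genuine Kantorovich potentials with $\sum\lambda_i\psi_i=0$ \emph{everywhere}, as Proposition~\ref{lem:characterization} requires. My plan is to take the explicit potentials from the proof of Theorem~\ref{thm:optimality_affine_Wp}. On $E^\perp$ these are $\psi_i(z)=\frac{c_i}{p}|P_{E^\perp}z|^p$, and in the translation case they are linear. These are $p$-concave and defined on all of $\RR^d$, their gradients have zero weighted sum everywhere, and so the weighted sum is constant; it can be set to $0$ by shifting constants. The correct normalization of the weights $c_i$ (factors $p\lambda_i$) needs to be tracked carefully, since Proposition~\ref{lem:characterization} is stated for unweighted $W_p^p$ potentials.
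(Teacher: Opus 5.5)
Your proposal is correct and follows essentially the paper's route. It uses the same affine maps $R_i(z)=A_i\overline A^{-1}(z-\overline v)+v_i$ and their globally defined $p$-concave potentials from Theorem~\ref{thm:optimality_affine_Wp}, reduces the Euler--Lagrange identity to the scalar barycenter equations for $\overline v$ and $\bar\zeta$, and concludes via Proposition~\ref{lem:characterization}. Your symmetry argument for the block form of $\overline A$ replaces the paper's coordinate computation, and your weight bookkeeping (obtaining $\sum_i\lambda_i\psi_i\equiv 0$ rather than the paper's unweighted $\sum_i\varphi_i=C$) is the normalization the characterization actually requires.
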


    \begin{proof}

        Thanks to our assumptions, without loss of generality, we can assume the matrices $A_i$ to be already in diagonal form. Indeed, 
        for $D_i := R A_i R^T$, we have that, for every matrix $B$, if we set $B':= R B R^T$ , then 
        \begin{align}
            \sum_{i=1}^N 
                \lambda_i 
                \big| R^T D_i R - B \big|^p 
                =
            \sum_{i=1}^N 
                \lambda_i 
                \big| R^T (D_i  - B') R\big|^p
                =
            \sum_{i=1}^N 
                \lambda_i 
                \big| D_i - B' \big|^p 
                    \, ,
        \end{align}
        which in particular ensures that $\xp(A_1, \dots, A_N) = R \, \xp(D_1, \dots, D_N)R^T $. Therefore we can assume $A_i=D_i$ being of the form 
        \begin{align}
\label{eq:representation_A_blocks_common}
    A_i
        =
    \left[ 
    \begin{array}{c|c} 
      \text{Id}_k & \mathbf{0} \\ 
      \hline 
      \mathbf{0} & \zeta_i \text{Id}_{d-k} 
    \end{array} 
    \right]
\end{align}
    for some common $k \in \{0, \dots, d \}$. 
    A direct consequence of this assumption is that the barycenters of these matrices is also of the same form. Indeed, 
    \begin{align}
        \sum_{i=1}^N 
            \lambda_i 
            \big| A_i - B \big|^p 
                = 
         \sum_{i=1}^N 
                \lambda_i 
                \Big(
                    \sum_{m,l=1}^d 
                        |(A_i)_{ml} - B_{ml}|^2
                \Big)^{\frac{p}2}
                =
         \sum_{i=1}^N 
                \lambda_i 
                \Big(
                    \sum_{l=1}^d
                        |(A_i)_{ll} - B_{ll}|^2
                            +
                 \sum_{m\neq l}
                        |B_{ml}|^2
                \Big)^{\frac{p}2}
                    \, .
    \end{align}
    This shows that the optimizers $B$ in the definition of $\xp(A_1, \dots, A_N)$ are also diagonal (i.e. $B_{jk}=0$ for every $j\neq k$). For $b \in \RR^d$, we denote by $B=b \cdot \text{Id}$ the corresponding diagonal matrix so that $B_{kk} = b_k$. Viceversa, for a given matrix $B$, we denote by $\diag(B)$ the corresponding diagonal vector $( B_{11}, \dots, B_{dd}) \in \RR^d$. With this notation at hand, from the above considerations we conclude that 
    \begin{align}
    \label{eq:bary_common_diag}
        \xp(A_1, \dots, A_N) 
            = 
        \argmin_{B = b \cdot \text{Id}, \, b \in \RR^d}
            \sum_{i=1}^N 
            \lambda_i
            \Big(
            \sum_{l =1}^d 
            | (A_i)_{ll} - b_l |^2
            \Big)^2
                = 
            \overline \xi \cdot \text{Id}
                \, , 
    \end{align}
    where $\overline\xi  \in \RR^d$ is given by 
    \begin{align}
        \overline\xi
            := 
        \xp\big( \diag(A_1), \dots, \diag(A_N) \big)
            = 
        \big(
            \underbrace{1, \dots, 1}_ {k} 
                , 
            \underbrace{\overline\zeta , \dots, \overline\zeta}_{d-k} 
        \big)
            \in \RR^d 
            \, , \qquad 
        \overline\zeta = \xp(\zeta_1, \dots, \zeta_N)
            \geq 0
                \, .
    \end{align}
    Here the latter equality follows from similar consideration as above, as 
    \begin{align}
        \xp\big( \diag(A_1), \dots, \diag(A_N) \big)
            =
        \argmin_{b \in \RR^d}
        \sum_{i=1}^N 
        \lambda_i 
        \Big(
            \sum_{l=1}^k
            |b_l - 1|^2 
                + 
            \sum_{l=k+1}^d
            |b_l - \zeta_i|^2
        \Big)^{\frac{p}2}
            \, , 
    \end{align}
    and thus minimisers satisfy $b_l = 1$ for every $l \in \{1, \dots, k \}$.

    Another consequence of \eqref{eq:bary_common_diag} is that the invertibility of $A_1$, together with the fact that $A_i$ are all nonnegative definite, ensures that $\overline{A}:= \xp(A_1, \dots, A_N)$ is invertible as well, as the $p$-barycenters of $N$ nonnegative numbers of which at least one is strictly positive is strictly positive as well, hence the triviality of the kernel of $\overline{A}$. 
    
    Let us denote by $R_i$ the affine transformations given by
    \begin{align}
    \label{eq:optimal_maps_affine_bary}
        R_i z
            := 
        A_i \overline{A}^{-1} z + w_i 
            \, , \qquad 
        w_i := v_i - A_i \overline{A}^{-1} \overline v 
            \in \RR^d 
                \, , \qquad 
            \forall z \in \RR^d 
                \, ,
    \end{align}
    which is nothing but the composition of the map $A_i\cdot + v_i$ with the inverse of $\overline{A} \cdot + \overline{v}$.
    Note that thanks to \eqref{eq:bary_common_diag}, the maps $R_i$ satisfy the assumptions of   Theorem~\ref{thm:optimality_affine_Wp}, and thus are optimal for $W_p(\overline\nu, \mu_i)$. Furthermore, as showed in the very proof of Theorem~\ref{thm:optimality_affine_Wp}, we know that there exist Kantorovich potentials $\varphi_i$ (thus optimal for $\frac1pW_p^p(\overline\nu, \mu_i)$) so that 
    \begin{align}
    \label{eq:global}
      R_i(z) = z - 
    \frac
        {\nabla \varphi_i(z)}
        {|\nabla \varphi_i(z)|^{\alpha_p}} 
            \, , \qquad
        \forall z \in \RR^d 
            \, .
    \end{align}

   We prove the optimality of $\overline\nu$ using Proposition~\ref{lem:characterization}. Indeed, the fact that \eqref{eq:global} holds \textit{globally} on the whole $\RR^d$ (whereas typically such conditions only hold on the support of the barycenter) allows for the proof of
   \begin{align}\label{eq:xp_identiy_global}
        \xp\circ (R_1 , \dots, R_N) (z) = z 
            \, , \qquad  \forall z \in \RR^d 
                \, .
    \end{align}
   Using \eqref{eq:xp_identiy_global} and \eqref{eq:equivalence_identity_sumzeropotentials}, one can show that 
      \begin{align}
    \label{eq:claim_affine_bary}
        \sum_{i=1}^N 
            \varphi_i(z) 
        = 
            C \in \RR 
                \, , \qquad 
            \forall z \in \RR^d 
                \, , 
    \end{align}
    and conclude that $\overline\nu$ is the $p$-Wasserstein barycenter between $\mu_1, \dots, \mu_N$ by Proposition~\ref{lem:characterization}.
   
\noindent 
\textit{Proof of \eqref{eq:xp_identiy_global}}. 
Recalling the definition of $R_i$ as composition of two affine maps, the sought equality is equivalent to 
\begin{align}
\label{eq:claim_affine_joint_pre}
    \xp(A_1 z + v_1, \dots, A_N z + v_N) 
        =
    \overline{A}z + \overline{v} 
        \, , \qquad 
    \forall z \in \RR^d 
        \, .
\end{align}
In other words, we have to prove that
\begin{align}
\label{eq:claim_affine_joint}
    \sum_{i=1}^N 
    \lambda_i 
    \big|A_i z + v_i - (\overline A z + \overline v)\big|^{p-2} \big(A_i z + v_i - (\overline A z + \overline v) \big) = 0
        \, , \qquad 
    \forall z \in \RR^d 
        \, .
\end{align}

This is precisely where the structure assumptions on $A_i$ and $v_i$ come into play. In the first case where we have no dilation and $A_i = \textrm{Id}$ for every $i=1, \dots, N$ (and therefore  $\overline{A} = \textrm{Id}$), then \eqref{eq:claim_affine_joint} reduces to the very definition of $\overline{v} = \xp(v_1, \dots, v_N)$, whence the claimed equality. 

Let us show the validity of \eqref{eq:claim_affine_joint} when the shift is constant, i.e. when $v_j = v \in \RR^d$ for every $j = 1, \dots, N$. As the barycenter of vectors which are shifted \textit{by the same vector} $v$ is the shifted (by $v$) barycenter of the original vectors, in this case \eqref{eq:claim_affine_joint} becomes 
\begin{align}
    \sum_{i=1}^N 
    \lambda_i 
    \big|A_i z  - \overline A z \big|^{p-2} \big(A_i z - \overline A z \big) = 0
        \, , \qquad 
    \forall z \in \RR^d 
        \, .
\end{align}
As the matrices $A_1, \dots , A_N$ commute, we have already observed that we can work in coordinates which make the matrices diagonal, and where by \eqref{eq:bary_common_diag} we have that 
\begin{align}
\label{eq:bary_common_diag2}
     \xp(A_1, \dots, A_N) 
        = 
    \overline \xi \cdot \text{Id}
        \, , \qquad 
    \overline\xi =   \big(
            \underbrace{1, \dots, 1}_ {k} 
                , 
            \underbrace{\overline\zeta , \dots, \overline\zeta}_{d-k} 
        \big)
            \in \RR^d 
            \, , \quad  
        \overline\zeta = \xp(\zeta_1, \dots, \zeta_N)
            \, ,
\end{align}
where we recall that $\zeta_j$ is the only eigenvalue of $A_j$ possibly different from one.  In particular, when writing $z=(z_1, \dots, z_k, \tilde z)$ with $\tilde z \in \RR^{d-k}$, the validity of \eqref{eq:bary_common_diag2} is equivalent to
\begin{align}
    0
        =
    \sum_{i=1}^N 
    \lambda_i 
    \big|\zeta_i \tilde z  - \overline\zeta  \tilde z \big|^{p-2} \big(\zeta_i\tilde z - \overline\zeta \tilde z \big) 
        =
    \bigg(
    \sum_{i=1}^N 
        \lambda_i
        \big|\zeta_i   - \overline\zeta  \big|^{p-2} \big(\zeta_i  - \overline\zeta \big) 
    \bigg)
    | \tilde z|^{p-2}\tilde z     
        \, , \qquad 
    \forall \tilde z \in \RR^{d-k}
        \, ,
\end{align}
or, in other words, 
\begin{align}
    \sum_{i=1}^N 
        \lambda_i
        \big|\zeta_i   - \overline\zeta  \big|^{p-2} \big(\zeta_i  - \overline\zeta \big) 
    = 0
        \, .
\end{align}
This is precisely the condition arising from $\overline\zeta = \xp(\zeta_1, \dots, \zeta_N)$.  
    \end{proof}

\begin{remark}[What goes wrong with general affine transformations]
    The structural assumptions of the previous theorem are two-fold. 
    
    On one side, the fact that the matrices must have at most one eigenvalue different from one is intrinsically related to the fact that such a property characterises the optimality of the associated transport maps given by \eqref{eq:optimal_maps_affine_bary}, as described in Theorem~\ref{thm:optimality_affine_Wp}. The commutation property between the matrices $A_i$ is what ensures that such a property is also preserved for $\overline{A}$, and therefore for the composition in \eqref{eq:optimal_maps_affine_bary}.

    On the other hand, the need for having either pure translations or affine maps with the same shift is a consequence of the lack of linearity of the Euclidean p-barycenter. Once working in coordinates which make each $A_i$ in diagonal form, it is clear that proving \eqref{eq:claim_affine_joint_pre} accounts to show that ($\xp$ on the right-hand side below is meant as a one-dimensional barycenter)
\begin{align}
     \Big( 
        \xp(\xi_1 \tilde z + v_1, \dots, \xi_N \tilde z + v_N) 
    \Big)_l
        = 
    \xp(\xi_i , \dots, \xi_N ) 
    \tilde z^l
        +
    \xp(v_1^l , \dots,  v_N^l)
        \, , \quad 
\end{align}
for every $\tilde z \in \RR^{d-k}$ and $ l = 1 , \dots, d-k$. 
This this is not true when $p\neq2$, while it is simply a consequence of the linearity of the arithmetic mean for the case $p=2$. It would be interesting to understand what the $W_p$-barycenters would be in the case of genuinely affine transformations, which unfortunately goes beyond the reach of this work. 
\end{remark}

To conclude this section and the paper, let us highlight the explicit structure of the maps involved in the barycentric problem in the setting of the previous Proposition~\ref{prop:bary_affine}. In particular, in this case $g_p$ is clearly regular, as $J_{\barp}$ is constant.

 Let $\mu \ll \LL^d$ be a reference probability measure. Consider probability measures of the form $\mu_i = \big( A_i \cdot + v_i \big)_{\#} \mu$, in the two different regimes.  We first discuss the case of pure translations (i.e. $A_i = {\rm Id}$ for every $i$) and then the case of commuting, block matrices.

\bigskip 
\noindent
\textit{Translations}. \ 
Consider $A_i = {\rm Id}$ for every $i = 1 , \dots, N$. From Proposition~\ref{prop:bary_affine}, we know that  the $p$-Wasserstein barycenter between $\mu_1, \dots, \mu_N$ is given by 
\begin{align}
    \overline\nu_p 
        = 
    \big( 
        \cdot + \overline v
    \big)_{\#}\mu
        =
    \big( 
        \cdot -v_1 + \overline v
    \big)_{\#}\mu_1
        \, , \qquad
    \text{where} \quad 
    \overline{v} := \xp(v_1, \dots, v_N)
        \, .
\end{align}

In particular, whenever $\mu = f_1 \LL^d$ with $f_1 \in L^q$, we have $g_p \in L^q$ and $\| g_p \|_{L^q} =\| f_1 \|_{L^q}$. Indeed, in this particular case, the maps $T_i$ are explicitly given by 
\begin{align}
    T_i(x) = x - v_1 + v_i 
        \, , \qquad 
    x \in \RR^d   \, , \, i = 1, \dots , N \, ,
\end{align}
and the map $\barp$ is simply given by 
\begin{align}
    \barp(x_1) = \xp(x_1, T_2(x_1), \dots, T_N(x_1)) 
        &=
    \xp(x_1 - v_1 + v_1, x_1 - v_1 + v_2 , \dots, x_1 - v_1 + v_N)
\\
        &=
    x_1 - v_1 + \xp(v_1, v_2, \dots, v_N)
\\
        &=
    x_1 - v_1 + \overline{v}
        \, ,
\end{align}
hence it is the translation by the vector $\overline{v} - v_1$.

\bigskip 
\noindent
\textit{Linear transformations}. \ 
Consider $v_i = v$, and without loss of generality we assume $v=0$ and assume that $A_1,\dots, A_N$ satisfy all the assumption of Proposition~\ref{prop:bary_affine}.
Then by Proposition~\ref{prop:bary_affine}, the $p$-Wasserstein barycenter between $\mu_1, \dots, \mu_N$ is given by  
\begin{align}
\label{eq:statement_bary_affine}
    \overline\nu_p 
        = 
    \big( 
        \overline{A}\cdot
    \big)_{\#}\mu
        \, , \qquad \text{where} \quad
    \overline{A}:= \xp(A_1, \dots, A_N) 
        \, .
\end{align}
In this case, the maps $T_i$ are given by 
\begin{align}
    T_i(x) = A_i A_1^{-1} x
        \, , \qquad 
    x \in \RR^d   \, , \, i = 1, \dots , N \, . 
\end{align}
Therefore, the map $\barp$ can be computed as
\begin{align}
    \barp(x_1) = \xp(x_1, T_2(x_1), \dots, T_N(x_1)) 
        &=
    \xp(A_1 A_1^{-1} x_1,  A_2 A_1^{-1} x_1  , \dots, A_N A_1^{-1} x_1)
\\
        &=
    \xp(A_1 A_1^{-1},  A_2 A_1^{-1} , \dots, A_N A_1^{-1} ) x_1
\\
        &=
    \xp(A_1,  A_2  , \dots, A_N )  A_1^{-1} x_1
\\
        &=
    \overline{A} A_1^{-1} x_1
        \, ,
\end{align}
hence it is the linear transformation with associated matrix $\overline{A} A_1^{-1}$. Note that in the latter computation, we used the 1 homogeneity of the barycenter map (as a map over matrices) for our special choice of linear transformations, which holds true as a consequence of the $p$-homogeneity of the $p$-Euclidean barycenter and the property showed in \eqref{eq:bary_common_diag}. This property generally fails to hold for general matrices $A_1, \dots A_N$, as the $p$-barycenter between matrices is indeed not $1$-homogenous in the space of all self-adjoint matrices. Such property holds more generally if we assume that the matrices commute, the proof being very similar to the proof in \eqref{eq:bary_common_diag} when diagonal in a common basis.

\appendix
\section{Injectivity estimate and proof of Theorem \ref{th:absolutecontinuity}}
\label{appendix}
The aim of this appendix is to prove Theorem~\ref{th:duality}. Lemma \ref{lem:estimate} below is crucial in Section~\ref{sec:general_estimate} for the proof of  Corollary~\ref{cor:injectivity estimate barp}, and consequently of Proposition~\ref{prop:general_Lq_estimate_intro}.

The next lemma, whose proof is a direct consequence of the definition of $\xp$ and its local Lipschitz regularity (Remark~\ref{rem:first_properties_xp}), describes the properties of the $p$-barycenter on the sets $D_S$, defined in \eqref{eq:defDS_intro}.
\begin{lemma}\label{lem:reducedproblem}
Let  $S\subsetneq\{1,\dots, N\}$. Then for every $\xvec=(\xdots)\in D_S$, $x_i$ is a solution of the variational problem 
\begin{equation}\label{eq:reducedbary}    \xpS(\xvec_{S^\mathsf{c}}):=\underset{z\in\RR^d}\argmin\sum_{j\not\in S}\lambda_j|x_j-z|^p \quad \text{for every}\quad  i\in S,
\end{equation}
 where, if $|S|=K<N$, $\xvecS\in \RR^{(N-K)d}$ is the vector with components $x_j$, $j\not\in S$. In particular, $ \xpS$ is Lipschitz on $\bigtimes_{i\notin S}\spt\mu_i$. Notice that, since $x_i=\xp(\xvec)$ on $D_S$,  $\xp(\xvec)=\xpS(\xvec_{S^\mathsf{c}})$  for every $\xvec \in D_S$.
\end{lemma}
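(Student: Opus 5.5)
The statement has two parts. The first is that on $D_S$ each $x_i$ with $i\in S$ minimizes the reduced functional $z\mapsto\sum_{j\notin S}\lambda_j|x_j-z|^p$. The second is that the reduced barycenter $\xpS$ is Lipschitz on $\bigtimes_{i\notin S}\spt\mu_i$. I will prove them in this order.

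\emph{Reduced minimization.} Fix $\xvec\in D_S$ and write $z^*:=\xp(\xvec)$, so that $x_i=z^*$ for every $i\in S$. I would start from the Euler--Lagrange equation \eqref{eq:euler-lagrange} at $z^*$, namely $\sum_{j=1}^N\lambda_j|x_j-z^*|^{p-2}(x_j-z^*)=0$. Every term with $j\in S$ vanishes, since $x_j-z^*=0$ and $p>1$ makes $|w|^{p-2}w$ continuous with value $0$ at $w=0$. What remains is $\sum_{j\notin S}\lambda_j|x_j-z^*|^{p-2}(x_j-z^*)=0$, which is exactly the first-order condition of the reduced strictly convex functional. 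By strict convexity and coercivity this critical point is the unique minimizer, so $z^*=\xpS(\xvecS)$. Because $S\neq\{1,\dots,N\}$, the functional is genuinely nonconstant and the argmin is well defined. Hence $x_i=z^*=\xpS(\xvecS)$ for $i\in S$, and also $\xp(\xvec)=\xpS(\xvecS)$.

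\emph{Lipschitz regularity.} Here I note that $\xpS$ is itself a $p$-barycenter map on the $N-K$ points $\{x_j\}_{j\notin S}$. The weights $\lambda_j$ may be renormalized to sum to one without changing the argmin. Remark~\ref{rem:first_properties_xp}, item~\ref{item:first_properties_xp_1}, then gives local Lipschitz continuity on $\RR^{(N-K)d}$. Under the standing compact-support assumption (or on any bounded piece of the supports), local Lipschitz continuity upgrades to a global Lipschitz bound on the compact set $\bigtimes_{i\notin S}\spt\mu_i$ by a finite covering argument. If the supports are not bounded, the claim should be read as local Lipschitz continuity, which is all that is used later in Lemma~\ref{lem:estimate}.

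\emph{Main obstacle.} The only delicate step is the Lipschitz claim itself. The argument in Remark~\ref{rem:first_properties_xp} writes $\xp$ as a minimum of the functions $f_z$, but this really controls the minimal value rather than the argmin. If a more robust route is needed, I would argue by strong monotonicity of $w\mapsto|w|^{p-2}w$ on bounded sets, using the same inequality as in the proof of \eqref{eq:final_lb}. Take two configurations $\yvec,\ytildevec$, subtract their Euler--Lagrange equations, and test against $\xpS(\yvec)-\xpS(\ytildevec)$. This gives a Hölder-type estimate, with exponent $1/(p-1)$ for $p\geq2$, and a genuine Lipschitz bound away from the diagonal through Proposition~\ref{prop:diff_xp}. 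In every case this suffices for continuity of $\xpS$ and its identification with $\xp$ on $D_S$, which is the property actually used afterwards.
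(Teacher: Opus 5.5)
\textbf{There is a genuine gap in the Lipschitz part.}

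Your first part is correct. It is exactly the ``direct consequence of the definition of $\xp$'' the paper invokes. For the Lipschitz claim you cite the same Remark~\ref{rem:first_properties_xp} the paper relies on, and your suspicion of that remark is well founded. The quantity $\min_{z\in K}f_z(\yvec)$ is the cost $c_p(\yvec)$, not $\xp(\yvec)$. So the remark proves local Lipschitz continuity of the value and says nothing about the minimizer, and the paper's one-line proof inherits a gap at exactly this point.

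The gap in your proposal is that your repair does not close it.
\begin{itemize}
\item A H\"older bound with exponent $\theta=1/(p-1)$ is not the Lipschitz property claimed in the statement.
\item Your assertion that only continuity of $\xpS$ is used afterwards is false. In the proof of Lemma~\ref{lem:estimate}, the bound $|\xp(\yvec)-\xp(\ytildevec)|^2\le L_p(\xvec)\sum_{i\notin S}|y_i-\tilde y_i|^2$ makes $|\yvec-\ytildevec|^2$ comparable to $\sum_{i\notin S}|y_i-\tilde y_i|^2$. That is what lets the error $\eps N(1+|H_i(\xvec)|)|\yvec-\ytildevec|^2$ be absorbed into $\sum_{i\notin S}\Lambda_i(\xvec)|y_i-\tilde y_i|^2$.
\item With only $\theta<1$ you get $|\yvec-\ytildevec|^2\lesssim\sum_{i\notin S}|y_i-\tilde y_i|^2+\big(\sum_{i\notin S}|y_i-\tilde y_i|^2\big)^{\theta}$. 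The second term dominates for small increments, so no choice of $\eps$ works. Corollary~\ref{cor:DScase} and Theorem~\ref{th:absolutecontinuity} would then lose their footing.
\end{itemize}

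\textbf{How to close it: a global argument.} Lipschitz continuity does hold, globally and for every $1<p<\infty$. It follows from a \emph{weighted} version of the monotonicity argument you sketch; the unweighted bound $|u-v|^p$ is what loses the exponent. Write $\Phi(u):=|u|^{p-2}u$, $z:=\xpS(\yvec_{S^\mathsf{c}})$, $\tilde z:=\xpS(\ytildevec_{S^\mathsf{c}})$, $u_j:=y_j-z$, $v_j:=\tilde y_j-\tilde z$ and $w_j:=(|u_j|+|v_j|)^{p-2}$. Use the standard two-sided estimates, valid for all $p>1$:
\begin{itemize}
\item $\langle\Phi(u)-\Phi(v),u-v\rangle\ge c_p(|u|+|v|)^{p-2}|u-v|^2$;
\item $|\Phi(u)-\Phi(v)|\le C_p(|u|+|v|)^{p-2}|u-v|$.
\end{itemize}
Subtract the two reduced Euler--Lagrange equations and pair the $j$-th term with $u_j-v_j=(y_j-\tilde y_j)-(z-\tilde z)$. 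The $z-\tilde z$ part drops because $\sum_{j\notin S}\lambda_j(\Phi(u_j)-\Phi(v_j))=0$. This leaves $c_p\sum_{j\notin S}\lambda_jw_j|u_j-v_j|^2\le C_p\sum_{j\notin S}\lambda_jw_j|u_j-v_j|\,|y_j-\tilde y_j|$. Cauchy--Schwarz in weighted $\ell^2$, together with $z-\tilde z=(y_j-\tilde y_j)-(u_j-v_j)$, then gives $|z-\tilde z|\le(1+C_p/c_p)\max_{j\notin S}|y_j-\tilde y_j|$. If some $u_j=v_j=0$, the bound is immediate since then $z-\tilde z=y_j-\tilde y_j$; otherwise every $w_j\in(0,\infty)$. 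This is the claimed Lipschitz bound on all of $\RR^{(N-K)d}$, with no compactness needed, and the same argument repairs Remark~\ref{rem:first_properties_xp}.

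\textbf{Alternative: a local argument.} For what Lemma~\ref{lem:estimate} actually needs, the ``Lipschitz away from the diagonal'' route you allude to works once you make one observation. For $\xvec\in D_S$, your first part gives $x_j\neq\xpS(\xvecS)$ for every $j\notin S$. The reduced Euler--Lagrange map is then smooth near $(\xvecS,\xpS(\xvecS))$. Its $z$-derivative $-\sum_{j\notin S}\lambda_j|x_j-z|^{p-2}\big(\mathrm{Id}+(p-2)e_j\otimes e_j\big)$, with $e_j:=(x_j-z)/|x_j-z|$, is negative definite. The implicit function theorem then makes $\xpS$ of class $C^1$, hence Lipschitz, near $\xvecS$. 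This also covers $1<p<2$, where one must stay away from configurations with some $x_j=z$.
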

The following Lemma is a refined version of \cite[Lemma 2.9]{BFR24-p} applied to the case $h(\cdot)=|\cdot|^p$. Lemma 2.9 in \cite{BFR24-p} provides inequality \eqref{eq:firstinequality} below and is in turn a refined version of \cite[Lemma 5.2]{BFR24-h}. 
Here we use the notation $B_r(x)$ to denote the open ball centered on $x$ of radius $r$.
Recall the definition of $c_p$-monotone sets \eqref{eq:c-monotonicity}.

 \begin{lemma}[Local regularity on $c_p$-monotone set]\label{lem:estimate}
Let $\Gamma \subset \RR^{Nd}$ be a $c_p$-monotone set. Then, for every $S\subsetneq\{1,\dots,N\}$ and $\xvec\in D_S$, there exists $r(\xvec)>0$ such that \begin{align}
     \label{eq:local_reg_cp_mon}
 |\xp(\yvec)-\xp(\ytildevec)|&\ge \frac 12  \frac {\left(\min_{i\notin S} \Lambda_i(\xvec) \right)}{\max_{i\notin S}|H_{i}(\xvec)|}\Big( \sum_{i\notin S}|y_i-\tilde y_i|^2 \Big)^{\frac12},
     \end{align}
   for every $\yvec, \ytildevec\in  B_{r(\xvec)}(\xvec)\cap D_S \cap  \Gamma$, where $H_i$ and $\Lambda_i$ are defined in \eqref{eq:Hi_Lambda_i}.
 \end{lemma}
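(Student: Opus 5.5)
The approach is to combine $c_p$-monotonicity of $\Gamma$ with a second-order Taylor expansion of $c_p$ around $\xvec$. The reduction of Lemma~\ref{lem:reducedproblem} lets us discard the coordinates in $S$. Fix $\xvec\in D_S$ and set $z=\xp(\xvec)$. For $i\notin S$ we have $x_i\neq z$, so each $H_i(\xvec)$ is well defined and positive definite: its eigenvalues are $\lambda_i|x_i-z|^{p-2}$ and $(p-1)\lambda_i|x_i-z|^{p-2}$. Hence $\sum_{k\notin S}H_k(\xvec)$ is invertible. On $D_S$, Lemma~\ref{lem:reducedproblem} gives
\[\xp(\yvec)=\xpS(\yvec_{S^\mathsf{c}}).\]
The point $\xvec_{S^\mathsf{c}}$ lies off the diagonal of the reduced space. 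Therefore Proposition~\ref{prop:diff_xp}, applied to the reduced problem, shows that $\xpS$ is $C^1$ near $\xvec_{S^\mathsf{c}}$, with
\[\nabla_{x_i}\xpS=\Big(\sum_{k\notin S}H_k\Big)^{-1}H_i .\]
Similarly, $c_p$ restricted to the variables $\yvec_{S^\mathsf{c}}$ (the $S$-variables enter only through terms vanishing on $D_S$) is $C^2$ near $\xvec_{S^\mathsf{c}}$. By the envelope theorem its gradient is $\nabla_{x_i}c_p=p\lambda_i|x_i-\xp|^{p-2}(x_i-\xp)$, and its Hessian is
\[\nabla^2c_p=\operatorname{diag}(H_i)-\big(H_i\bar H^{-1}H_j\big)_{i,j\notin S}.\]

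Next we use monotonicity with the partial swap. Take $\yvec,\ytildevec\in B_r(\xvec)\cap D_S\cap\Gamma$ and exchange the $i$-th coordinates only for a single index $i\notin S$. The monotonicity inequality \eqref{eq:c-monotonicity} reads
\[c_p(\yvec)+c_p(\ytildevec)\le c_p(\yvec^{(i)})+c_p(\ytildevec^{(i)}),\]
where $\yvec^{(i)}$ is $\yvec$ with $y_i$ replaced by $\tilde y_i$, and $\ytildevec^{(i)}$ is defined symmetrically. Expanding to second order, the left minus the right side equals the mixed second difference
\[\big\langle y_i-\tilde y_i,\ \nabla^2_{x_ix_{S^c\setminus i}}c_p\,(\yvec_{S^c\setminus i}-\ytildevec_{S^c\setminus i})\big\rangle+o(|\yvec-\ytildevec|^2).\]
Adjusting the argument of \cite[Lemma 2.9]{BFR24-p}, this yields inequality \eqref{eq:firstinequality}, which I take as given: the cross term is $\le o(\cdot)$, i.e.
\[\Big\langle y_i-\tilde y_i,\ H_i\bar H^{-1}\sum_{j\notin S, j\neq i}H_j(y_j-\tilde y_j)\Big\rangle\ \ge\ -o(|\yvec-\ytildevec|^2).\]
Summing over $i\notin S$ and writing $w_i=H_i(y_i-\tilde y_i)$ and $W=\sum_{i\notin S}w_i$ gives
\[\langle W,\bar H^{-1}W\rangle\ \ge\ \sum_{i\notin S}\langle w_i,\bar H^{-1}w_i\rangle-o(\cdot).\]

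This gives the lower bound. By the $C^1$ regularity of $\xpS$,
\[\xp(\yvec)-\xp(\ytildevec)=\bar H^{-1}W+o(|\yvec-\ytildevec|),\]
where $\bar H=\sum_{k\notin S}H_k(\xvec)$. We have
\[|\bar H^{-1}W|\,|W|\ \ge\ \langle W,\bar H^{-1}W\rangle\ \ge\ \sum_{i\notin S}\Lambda_i(\xvec)|y_i-\tilde y_i|^2-o(\cdot),\]
since $\langle w_i,\bar H^{-1}w_i\rangle=\langle (y_i-\tilde y_i),H_i\bar H^{-1}H_i(y_i-\tilde y_i)\rangle$ is exactly controlled by $\Lambda_i$. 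Moreover $|W|\le \max_{i\notin S}|H_i|\sqrt{|S^c|}\,(\sum_{i\notin S}|y_i-\tilde y_i|^2)^{1/2}$. Dividing yields the bound with constant $\min\Lambda_i/(C\max|H_i|)$. The loss $\sqrt{|S^c|}$ must be absorbed or avoided. I expect to avoid it by instead estimating $\langle W,\bar H^{-1}W\rangle\le |\bar H^{-1}W|\,|W|$ with $|W|\le(\sum_i|w_i|^2)^{1/2}$-type bounds after Cauchy–Schwarz, or accept it inside the factor $\tfrac12$ by shrinking $r$. Finally, $r(\xvec)$ is chosen so small that the $o(\cdot)$ remainders and the variation of $H_i,\Lambda_i$ in the ball cost at most a factor $1/2$.

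The main obstacle is the uniformity of the Taylor remainders. Both $\nabla^2 c_p$ and $\nabla\xpS$ must be continuous on a neighbourhood where all $y_j\neq\xp(\yvec)$ for $j\notin S$, and this holds because $D_S$ restricted to $S^c$ is an open condition. Near points of $S$ the cost is not $C^2$ (for $p<2$); this is handled precisely by the reduction to $\xpS$, swapping only indices outside $S$.
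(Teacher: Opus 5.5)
Your skeleton matches the paper's. You import the single-swap inequality \eqref{eq:firstinequality}, which the paper also takes from the literature without proof, sum it over $i\notin S$, use $\xp=\xpS$ on $D_S$, and close with Cauchy--Schwarz.

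\textbf{The gap is the form in which you import the inequality.} You replace $\overline H=\sum_{k=1}^N H_k(\xvec)$ by $\sum_{k\notin S}H_k(\xvec)$ and drop the cross terms with $j\in S$, on the grounds that the $S$-variables only enter through terms vanishing on $D_S$. But the swapped points $\yvec^{(i)},\ytildevec^{(i)}$ are not in $D_S$. Exchanging $y_i$ and $\tilde y_i$ moves the barycenter off the points $y_k$, $k\in S$. So $c_p(\yvec^{(i)})$ need not agree to second order with the reduced cost of $\yvec^{(i)}_{S^\mathsf{c}}$, and $c_p$-monotonicity of $\Gamma$ does not transfer to the reduced cost.

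For $p>2$ this is harmless. There $c_p$ is $C^2$ near $\xvec$ and $H_k(\xvec)=0$ for $k\in S$, so your cross-term bound is exactly the full second-order expansion. For $p=2$ it is false. Take $d\ge 2$, $N=3$, $S=\{3\}$, weights $(\lambda,\lambda,1-2\lambda)$ with $\lambda\le\frac14$. Let $\ytildevec=\xvec\in D_S$ and $\yvec=\xvec+t\,(b_1,b_2,\tfrac{b_1+b_2}{2})$, with $b_1,b_2$ unit vectors at angle $2\pi/3$; then $\yvec\in D_S$. Since $c_2$ is quadratic, the swap conditions reduce to $\langle\sum_{k\in I}\lambda_kb_k,\sum_{k\notin I}\lambda_kb_k\rangle\ge0$, with $b_3=\frac{b_1+b_2}{2}$. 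These hold, so $\{\yvec,\ytildevec\}$ is $c_2$-monotone. Yet, with $H_k=\lambda_k\,\mathrm{Id}$ as in \eqref{eq:full_form_Hi_Hbar}, your cross term for $i=1$ equals $\frac{\lambda}{2}\langle b_1,b_2\rangle t^2<0$, which is not $-o(t^2)$.

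Correspondingly, at $p=2$ your chain produces a $\Lambda_i$ larger than the one in \eqref{eq:Hi_Lambda_i} by the factor $(\sum_{k\notin S}\lambda_k)^{-1}$. The bound it outputs then fails for small $\lambda$: take $b_2$ nearly opposite to $b_1$.

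The repair is to sum \eqref{eq:firstinequality} exactly as stated, with the full $\overline H$, as the paper does. You then convert the error $|\yvec-\ytildevec|^2$ into $\sum_{i\notin S}|y_i-\tilde y_i|^2$ using $|y_k-\tilde y_k|=|\xp(\yvec)-\xp(\ytildevec)|$ for $k\in S$ and the Lipschitz bound of $\xpS$; you leave this conversion implicit. Your claim that the reduction handles $p<2$ fails for the same reason, and there $H_k(\xvec)$, $k\in S$, is not even finite.

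\textbf{Second, the factor $\sqrt{|S^\mathsf{c}|}$ you lose is genuine,} and the paper's closing Cauchy--Schwarz step incurs it too. Neither of your remedies works. Shrinking $r$ only controls the $o(\cdot)$ terms and cannot absorb a fixed multiplicative constant. The bound $|W|\le(\sum_i|w_i|^2)^{1/2}$ is false: take all $w_i$ equal.

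What does work is to bound the summed inequality from above differently. Set $\overline H_{S^\mathsf{c}}:=\sum_{k\notin S}H_k(\xvec)$ and $v:=\overline H_{S^\mathsf{c}}^{-1}W$, so that $\xp(\yvec)-\xp(\ytildevec)=v+o(\cdot)$ by your linearization. Then use $\langle W,v\rangle=\langle\overline H_{S^\mathsf{c}}v,v\rangle\le\|\overline H_{S^\mathsf{c}}\|\,|v|^2$ instead of $|W|\,|v|$. Testing $H_i\overline H^{-1}H_i$ on $H_i^{-1}u$, with $u$ a unit top eigenvector of $\overline H$, gives $\Lambda_i\|\overline H_{S^\mathsf{c}}\|\le\Lambda_i\|\overline H\|\le\|H_i\|^2$ in operator norms. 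Hence $\sqrt{\min_i\Lambda_i/\|\overline H_{S^\mathsf{c}}\|}\ge\min_i\Lambda_i/\max_i\|H_i\|$, and the stated constant $\frac12$ follows after shrinking $r$.
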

Note that
\begin{align}\label{eq:positivity_Hi_Lambdai}
    &H_i(\xvec)\ge 0\, (\text{and thus} \ \Lambda_i(\xvec)\ge 0), \quad \text{for every} \ \xvec\in\RR^{dN},\\
    &H_i(\xvec)=0 \ (\text{and thus} \ \Lambda_i(\xvec) =0) \iff x_i=\xp(\xvec),
\end{align}
hence, for every $\xvec\in D_S$, $H_{i}(\xvec)>0$, i.e. it is a strictly positive definite matrix, and $\Lambda_i(\xvec)>0$, for every $i\notin S$.

 \begin{proof}
Fix $\Gamma \subset \RR^{Nd}$ a $c_p$-monotone set and let $\xvec \in\RRd$ be such that $x_i\neq\xp(\xvec)$, for some $i=1,\dots,N$.  By  \cite[Lemma 5.2]{BFR24-h}, we  know that for every $\eps>0$, there exists $r(\xvec)>0$ such that for $\yvec, \ytildevec\in B_{r(\xvec)}(\xvec) \cap \Gamma$, it holds
\begin{equation}\label{eq:firstinequality}
  (y_i-\tilde{y}_{i})^TH_i(\xvec) \left(\xp(\yvec)-\xp(\ytildevec)\right)\ge \Lambda_{i}(\xvec) |y_{i}-\tilde y_{i}|^2-\eps N(1+|H_{i}(\xvec)|)|\yvec-\ytildevec|^2,
\end{equation}
for $\Lambda_i(\xvec)>0$ the minimum eigenvalue of the matrix  $H_i(\xvec)\overline H(\xvec)^{-1}H_i(\xvec)$. 
  
Now let $S\subsetneq\{1,\dots,N\}$ and $\xvec\in S$. 
Then, by \eqref{eq:firstinequality}, for every $\yvec, \ytildevec\in B_{r(\xvec)}(\xvec)\cap D_S \cap \Gamma$ we have
\begin{align}
 \sum_{i\notin S}(y_i-\tilde{y}_{i})^TH_i(\xvec) \left(\xp(\yvec)-\xp(\ytildevec)\right)\ge \sum_{i\notin S}\Lambda_i(\xvec) |y_{i}-\tilde y_{i}|^2-\eps N^2(1+|H_i(\xvec)|)|\yvec-\ytildevec|^2
    \, .
\end{align}

Notice now that 
\begin{align}
    |\yvec-\ytildevec|^2=\sum_{i\notin S}|y_{i}-\tilde y_{i}|^2+\sum_{i\in S} |y_{i}-\tilde y_{i}|^2=\sum_{i\notin S}|y_{i}-\tilde y_{i}|^2+\sum_{i\in S}|\xp(\yvec)-\xp(\ytildevec)|^2,
\end{align}
where the last equality follows directly from the definition of $D_S$. By Lemma \ref{lem:reducedproblem}, we know that $\xp$ is locally Lipschitz with respect to the components whose index does not belong to S, i.e. 
\begin{align}
    |\xp(\yvec)-\xp(\ytildevec)|^2=| \xpS(\yvec_{S^\mathsf{c}})- \xpS(\ytildevec_{S^\mathsf{c}})|^2\le L_p(\xvec)\sum_{i\notin S}|y_{i}-\tilde y_{i}|^2.
\end{align}
where $L_p(\xvec)= L_p({\bm \lambda}, N,\xvec)$ is the common Lipschitz constant 
 \begin{align}
    L_p(\xvec) 
        := 
    \sup_{S \subset \{1, \dots, N\}}
    {\rm Lip} 
    \Big( 
      \xpS
            ; 
        \bigtimes_{i\notin S}\pi^i(B_{r'}(\xvec))
    \Big)
        \in \RR_+ 
            \, ,
 \end{align}
 where $B_{r'}(\xvec)$ is a ball of fixed radius centered at $\xvec$.
All in all, putting the estimates together, we get that for $\xvec \in D_S$, for every $\eps>0$, there exists $r=r(\xvec,\eps)>0$, such that 
\begin{equation}\label{eq:firstinequality_sum}
  \, \, \sum_{i\notin S}(y_i-\tilde{y}_{i})^TH_i(\xvec) \left(\xp(\yvec)-\xp(\ytildevec)\right)\ge \sum_{i\notin S}(\Lambda_i(\xvec)-\eps L_p(\xvec)N^2(1+|H_i(\xvec)|))|y_{i}-\tilde y_{i}|^2,
\end{equation}
for every $\yvec, \ytildevec \in B_r(\xvec) \cap D_S \cap \Gamma$. 
In order to conclude the  proof, it is then enough to pick $\eps<\frac 12\frac{\min_{i\notin S}\{\Lambda_i(\xvec)\}}{L_p(\xvec)N^2\max_{i\notin S}\{(1+|H_i(\xvec)|)\}}$ and choose $r(\xvec) = r(\xvec,\eps(\xvec))$ accordingly, as the sought estimate \eqref{eq:local_reg_cp_mon} straightforwardly follows by \eqref{eq:firstinequality_sum} and Cauchy--Schwarz inequality.  
 \end{proof}
As a direct consequence we obtain the following:
\begin{corollary}\label{cor:DScase}
Let $\Gamma \subset \RR^{Nd}$ be a $c_p$-monotone set and $S\subset\{1,\dots, N\}$, $S\neq\{1,\dots, N\}$. Then there exists a countable cover $\{U_m\}_{m\in\NN}$ of the set $D_S\cap\Gamma$ with the following property: For every $m\in\NN$, there exists $L_m>0$ such that 
 \begin{equation}\label{eq:inverselipschitzp}
     |\xp(\yvec)-\xp(\ytildevec)|\ge L_m\left(\sum_{j\not\in S}|y_j-\widetilde y_j|^2\right)^{\frac 12}
 \end{equation}
 for every $\yvec=(\ydots),\ytildevec=(\ytildedots)\in D_S\cap\Gamma\cap U_m$.
\end{corollary}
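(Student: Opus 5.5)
The plan is to derive Corollary~\ref{cor:DScase} directly from Lemma~\ref{lem:estimate} by a Lindelöf-type covering argument. For every $\xvec \in D_S\cap\Gamma$, Lemma~\ref{lem:estimate} gives a radius $r(\xvec)>0$ such that \eqref{eq:local_reg_cp_mon} holds on $B_{r(\xvec)}(\xvec)\cap D_S\cap\Gamma$. The constant appearing there is
\begin{align}
    c(\xvec):=\frac12\,\frac{\min_{i\notin S}\Lambda_i(\xvec)}{\max_{i\notin S}|H_i(\xvec)|}.
\end{align}
We first check that $c(\xvec)>0$. Since $\xvec\in D_S$, we have $x_i\neq\xp(\xvec)$ for all $i\notin S$. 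By \eqref{eq:positivity_Hi_Lambdai}, each $H_i(\xvec)$ with $i\notin S$ is then positive definite, so $\Lambda_i(\xvec)>0$ and $|H_i(\xvec)|<\infty$. Note that $S^{\mathsf c}\neq\emptyset$ because $S\neq\{1,\dots,N\}$. Hence the ball $B_{r(\xvec)}(\xvec)$ carries the positive constant $c(\xvec)$.

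Next, the family $\{B_{r(\xvec)}(\xvec)\}_{\xvec\in D_S\cap\Gamma}$ is an open cover of $D_S\cap\Gamma\subset\RR^{Nd}$. Since $\RR^{Nd}$ is second countable, every subspace is Lindelöf, so we can extract a countable subcover $\{B_{r(\xvec^m)}(\xvec^m)\}_{m\in\NN}$. We then set $U_m:=B_{r(\xvec^m)}(\xvec^m)$ and $L_m:=c(\xvec^m)>0$. For $\yvec,\ytildevec\in D_S\cap\Gamma\cap U_m$, inequality \eqref{eq:local_reg_cp_mon} at the center $\xvec^m$ is exactly \eqref{eq:inverselipschitzp}.

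There is no real obstacle, since all of the analytic work sits in Lemma~\ref{lem:estimate}. The only points requiring care are that the constant in the lemma is evaluated at the center $\xvec$ and not at the points $\yvec,\ytildevec$, which makes it uniform on each ball, and that its positivity relies on $\xvec\in D_S$ with $S$ a proper subset. If one prefers an explicit countable cover, an alternative is to take rational centers and radii: pick balls $B_{\rho}(q)$ with $q\in\mathbb{Q}^{Nd}$, $\rho\in\mathbb{Q}_+$, each contained in some $B_{r(\xvec)}(\xvec)$ and meeting $D_S\cap\Gamma$, and assign to each such ball the constant of one chosen enclosing ball.
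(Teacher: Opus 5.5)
Your proof is correct and is essentially the paper's argument. You cover $D_S\cap\Gamma$ by the balls $B_{r(\xvec)}(\xvec)$ from Lemma~\ref{lem:estimate}, extract a countable subcover by second countability (Lindel\"of), and take $L_m$ to be the constant in \eqref{eq:local_reg_cp_mon} evaluated at the center $\xvec^m$. Your explicit check that $L_m>0$, using $S^{\mathsf c}\neq\emptyset$ and \eqref{eq:positivity_Hi_Lambdai}, is a detail the paper leaves implicit.
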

\begin{proof}
Clearly, $\Gamma\cap D_S\subset\bigcup_{\xvec\in D_S}B(\xvec,r(\xvec))$. As
 every subset of $\RR^{dN}$ is second countable, we can extract countably many points $\{\xvec_m\}\subset D_S$ such that $D_S\cap\Gamma\subset\bigcup_{m\in\NN}B_{r_m}(\xvec)$. The claim then follows with $U_m\coloneq B(\xvec_m,r_m)$ and $L_m= \frac 12  \frac {\left(\min_{i\notin S} \Lambda_i(\xvec_m) \right)}{\max_{i\notin S}|H_{i}(\xvec_m)|}$.
\end{proof}
\begin{remark}
Corollary \ref{cor:DScase} provides the same injectivity estimate as \cite[Proposition 3.7]{BFR24-p}, with the difference that in Corollary \ref{cor:DScase} the result holds for any $1<p<\infty$, while in the aforementioned paper it is proved only for $p\ge 2$. This injectivity estimate is the key for the proof of the absolute continuity of the $p$-Wasserstein barycenter $\nup$. In particular, it allows for the proof of \eqref{eq:(1)_lemm_diameterDS} in Lemma \ref{lem:diameterDS} below. Notice that in \cite{BFR24-p} the corresponding result is stated in Lemma 3.3 only for $S=\emptyset$ or in Lemma 3.8 for any $S\neq\{1,\dots,N\}$, but with the assumption that $p\ge 2$. 
The fact that \eqref{eq:(1)_lemm_diameterDS} in Lemma \ref{lem:diameterDS} holds or any $S\neq\{1,\dots,N\}$ allows weakening of the assumption on the marginals $\mu_1,\dots,\mu_N$, by requiring only one to be absolutely continuous, for any $1<p<\infty$.
 \end{remark}
 The proof of \eqref{eq:(1)_lemm_diameterDS} in Lemma  \ref{lem:diameterDS} below is the same as the one of \cite[Lemma~3.9 ]{BFR24-p} and it is a consequence of Corollary \ref{cor:DScase}. The proof of \eqref{eq:(2)_lemm_diameterDS} is a direct application of the definition of $D_S$. For it we refer to \cite[Lemma 3.4]{BFR24-p}.  
\begin{lemma}\label{lem:diameterDS}
 Let $\Gamma \in \RR^{Nd}$ be a $c_p$-monotone set.
 \begin{enumerate}
     \item\label{eq:(1)_lemm_diameterDS} Let $S\subset\{1,\dots,N\}$ be such that $S\neq\{1,\dots,N\}$, and let $\{U_m\}_{m\in\NN}$ be the countable cover of $D_S\cap\Gamma$ defined in Corollary \ref{cor:DScase}. 
If $E\subset\RR^d$ is such that $\diam(E)<\delta$ for some $\delta>0$, then 
 \[\diam(\pi^i(\xp^{-1}(E)\cap D_S\cap U_m\cap\Gamma) )<\frac {\delta} {L_m} \quad \text{for every} \quad i\notin S.\]
 In particular, if $E\subset\RRd$ is such that $\LL^d(E)=0$, then
    \begin{equation*}
     \LL^d(\pi^i(\xp^{-1}(E)\cap D_s\cap U_m \cap\Gamma))=\HH^d(\pi^i(\xp^{-1}(E)\cap D_s\cap U_m\cap\Gamma))=0,
 \end{equation*}
 for every $i\not\in S$ and for every $m\in\NN$.
 \item \label{eq:(2)_lemm_diameterDS} Let $S\subset\{1,\dots, N\}$ be such that $S\neq\emptyset$.
If $E\subset\RR^d$ is such that $\diam(E)<\delta$ for some $\delta>0$, then 
 \[\diam(\pi^i(\xp^{-1}(E)\cap D_S) )<\delta \quad \text{for every} \quad i\in S.\]
 In particular, if $E\subset\RRd$ is such that $\LL^d(E)=0$, then 
    \begin{equation*}\label{eq:hausdorffzero2}
     \LL^d(\pi^i(\xp^{-1}(E)\cap D_S))=\HH^d(\pi^i(\xp^{-1}(E)\cap D_S))=0 \quad \text{for every} \quad i\in S.
 \end{equation*}
 \end{enumerate}
 \end{lemma}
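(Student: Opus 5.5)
The plan is to derive both parts from the two injectivity facts already available: Corollary~\ref{cor:DScase} for the indices outside $S$, and the very definition of $D_S$ for the indices inside $S$.

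\textbf{Part (1).} Fix $S\neq\{1,\dots,N\}$, $m\in\NN$, an index $i\notin S$, and a set $E$ with $\diam(E)<\delta$. Take two points $y_i,\tilde y_i$ in $\pi^i(\xp^{-1}(E)\cap D_S\cap U_m\cap\Gamma)$. They are the $i$-th components of some $\yvec,\ytildevec\in D_S\cap\Gamma\cap U_m$ with $\xp(\yvec),\xp(\ytildevec)\in E$. Applying \eqref{eq:inverselipschitzp} gives
\begin{align}
  L_m|y_i-\tilde y_i| \le L_m\Big(\sum_{j\notin S}|y_j-\tilde y_j|^2\Big)^{1/2} \le |\xp(\yvec)-\xp(\ytildevec)| \le \diam(E)<\delta .
\end{align}
Taking the supremum over such pairs yields the diameter bound.

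For the measure-zero statement, this argument shows more: the projection $\pi^i$ restricted to $\xp^{-1}(E)\cap D_S\cap U_m\cap\Gamma$ factors through $\xp$ via a map $\xp(\yvec)\mapsto y_i$. This map is well defined, because two preimages with the same barycenter must have equal $i$-th components by the inequality above. It is also $L_m^{-1}$-Lipschitz. So the set $\pi^i(\xp^{-1}(E)\cap D_S\cap U_m\cap\Gamma)$ is the image of a subset of $E$ under a Lipschitz map. Lipschitz maps send $\HH^d$-null sets to $\HH^d$-null sets, and $\HH^d$ and $\LL^d$ agree up to a constant on $\RR^d$. Hence $\LL^d(E)=0$ gives the claim. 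This avoids any covering argument: one could instead cover $E$ by small balls of total measure at most $\eps$ and use the diameter bound on each piece, but the Lipschitz formulation is cleaner.

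\textbf{Part (2).} Here $S\neq\emptyset$ and $i\in S$. For $\yvec\in D_S$ we have $y_i=\xp(\yvec)$, so $\pi^i(\xp^{-1}(E)\cap D_S)\subset E$. Both the diameter bound and the null-set conclusion are then immediate.

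\textbf{Main obstacle.} The only delicate point is measurability: the projected sets need not be Borel. I would handle this by phrasing the conclusions for outer measure, or by noting that the sets are analytic when $\Gamma$ is Borel, and then applying the Lipschitz image bound for outer Hausdorff measure.
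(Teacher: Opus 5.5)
Your proof is correct and follows essentially the same route as the paper, which defers to its earlier companion work: the diameter bound in (1) comes directly from the inverse-Lipschitz estimate \eqref{eq:inverselipschitzp} of Corollary~\ref{cor:DScase}, and (2) from $y_i=\xp(\yvec)$ on $D_S$. Factoring $\pi^i$ through an $L_m^{-1}$-Lipschitz map defined on $\xp(\xp^{-1}(E)\cap D_S\cap U_m\cap\Gamma)\subset E$ is just a tidier packaging of the usual covering argument for the null-set claim, and your measurability worry is moot because sets of outer Lebesgue measure zero are automatically Lebesgue measurable.
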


We are now ready to prove Theorem \ref{th:absolutecontinuity}.
\begin{proof}[Proof of Theorem \ref{th:absolutecontinuity}]\label{proof:ThAC}
    Let us consider a set $E$ such that $\LL^d(E)=0$, and consider the family $\mathcal{F}_1=\{S\subset \{1,\dots, N\} \, : \, 1\in S \}$. As $\gamma_p$ is optimal for \eqref{MMpbary}, its support $\spt\gamma_p$ is $c_p$-monotone. Therefore, let $\{U^S_m\}$ is the countable cover of $D_S\cap\spt\gamma_p$ provided by Proposition \ref{cor:DScase}.
Then,
 \begin{align*}
(\xp)_\sharp\gamma_p(E)&=\gamma_p\left(\xp^{-1}(E)\cap\spt\gamma_p\cap \bigcup_{S\subset \{1,\dots,N\}}D_S\right)\\&\le\sum_{S\not\in \mathcal{F}_1}\gamma_p\left(\xp^{-1}(E)\cap\spt\gamma_p\cap D_S\right)+\sum_{S\in \mathcal{F}_1}\gamma_p\left(\xp^{-1}(E)\cap\spt\gamma_p\cap D_S\right)\\
&\le\sum_{S\not\in \mathcal{F}_1}\mu_1\left(\pi^1(\xp^{-1}(E)\cap\spt\gamma_p\cap D_S)\right)+\sum_{S\in \mathcal{F}_1}\mu_1\left(\pi^1(\xp^{-1}(E)\cap\spt\gamma_p\cap D_S)\right)\\
&\le\sum_{S\not\in \mathcal{F}_1}\sum_{m\in\NN}\mu_1\left(\pi^1(\xp^{-1}(E)\cap\spt\gamma_p\cap D_S\cap  U^S_m)\right) +\sum_{S\in \mathcal{F}_1}\mu_1\left(\pi^1(\xp^{-1}(E)\cap\spt\gamma_p\cap D_S)\right) .
 \end{align*}
Notice that the second inequality is due to the marginal constraint $\pi^1_\sharp\gamma_p=\mu_1$. By Lemma~\ref{lem:diameterDS}, $\LL^d(\pi^1(\xp^{-1}(E)\cap\spt\gamma_p\cap D_S\cap U^S_m))=\mathcal{H}^d(\pi^1(\xp^{-1}(E)\cap\spt\gamma_p\cap D_S \cap U^S_m))=0$ for every $S\not\in \mathcal{F}_1$ and every $m\in \NN$. Lemma~\ref{lem:diameterDS} then gives $\LL^d(\pi^1(\xp^{-1}(E)\cap\spt\gamma_p\cap D_S))=\mathcal{H}^d(\pi^1(\xp^{-1}(E)\cap\spt\gamma_p\cap D_S))=0$ for every  $S\in\mathcal{F}_1$. We conclude thanks to the absolute continuity of $\mu_1$.    
\end{proof}

{
\makeatletter
\let\addcontentsline\@gobblethree
\section*{Acknowledgments}
CB gratefully acknowledges funding from the Deutsche Forschungsgemeinschaft (DFG -- German Research Foundation) -- Project-ID 195170736 -- TRR109. 
LP gratefully acknowledges funding from the Deutsche Forschungsgemeinschaft (DFG -- German Research Foundation) under Germany’s Excellence Strategy -GZ 2047/1, Projekt-ID 390685813. Financial support by the Deutsche Forschungsgemeinschaft (DFG) within the CRC 1060, at University of Bonn project number 211504053, is also gratefully acknowledged. 
LP is also thankful for the support under the U-GOV project, identification number PSR$\_$LINEA8A$\_$25SMANT$\_$05.

Both the authors would like to thank the IAM (Institüt für Angewandte Mathematik) of the University of Bonn, the mathematics department "Federigo Enriques" of University of Milan, as well as the T\"UM (Technische Universit\"at M\"unchen) for their kind hospitality during the preparation of this project.
\makeatother
}


\end{document}